\newtheorem{theorem}{Theorem}  
\newtheorem{lemma}[theorem]{Lemma}
\newtheorem{proposition}[theorem]{Proposition}
\newtheorem{corollary}[theorem]{Corollary}
\newtheorem{remar}[theorem]{Remark}
\renewenvironment{proof}{Proof:\ \ \ }{\QED}
\newenvironment{remark}{\begin{remar}\rm}{\end{remar}}
\newcommand{\QED}{{\unskip\nobreak\hfil\penalty50%
\hskip1em\hbox{}\nobreak\hfil $\Box$%
\parfillskip=0pt \finalhyphendemerits=0 \par\medskip\noindent}}
\newcommand{\bfind}[1]{\index{#1}{\bf #1}}
\newcommand{\n}{\par\noindent}
\newcommand{\sn}{\par\smallskip\noindent}
\newcommand{\bn}{\par\bigskip\noindent}
\newcommand{\pars}{\par\smallskip}
\newcommand{\parm}{\par\medskip}
\newcommand{\im}{\mbox{\rm im}}
\newcommand{\ttiny}{}
\newcommand{\mR}{^{-_{\ttiny R}}}
\newcommand{\pR}{^{+_{\ttiny R}}}
\newcommand{\mF}{^{-_{\ttiny F}}}
\newcommand{\pF}{^{+_{\ttiny F}}}
\newcommand{\ovl}[1]{\overline{#1}}
\newcommand{\subsetuneq}{\mathrel{\raisebox{.8ex}{\footnotesize%
$\displaystyle\mathop{\subset}_{\not=}$}}}
\newcommand{\R}{\mathbb R}
\newcommand{\N}{\mathbb N}
\newcommand{\cX}{\mathcal X}
\newcommand{\cC}{\mathcal C}
\newcommand{\cal}{\mathcal}
\newcommand{\res}{\mbox{\rm res}}
\begin{document}
\title[Spaces of $\R$-places]{Embedding theorems for
spaces of $\R$-places of rational function fields and their products}
\author{Katarzyna and Franz-Viktor Kuhlmann}
\address{Department of Mathematics \& Statistics, University of
Saskatchewan, 106 Wiggins Road, Saskatoon, SK, S7N 5E6, Canada}
\email{fvk@math.usask.ca}
\address{Institute of Mathematics, Silesian University, Bankowa 14,
40-007 Katowice, Poland}
\email{kmk@math.us.edu.pl}
\date{May 13, 2012}
\thanks{The research of the second author was partially supported
by a Canadian NSERC grant.\\
The authors would like to thank the referee who did an amazing job of
carefully reading the manuscript and providing an extensive list of
very helpful corrections and suggestions.}
\subjclass[2000]{Primary 12J15,  Secondary 12J25}

\begin{abstract}
We study spaces $M(R(y))$ of $\R$-places of rational function fields
$R(y)$ in one variable. For extensions $F|R$ of formally real fields,
with $R$ real closed and satisfying a natural condition, we find
embeddings of $M(R(y))$ in $M(F(y))$ and prove uniqueness results.
Further, we study embeddings of products of spaces of the form $M(F(y))$
in spaces of $\R$-places of rational function fields in several
variables. Our results uncover rather unexpected obstacles to a positive
solution of the open question whether the torus can be realized as a
space of $\R$-places.
\end{abstract}
\maketitle

%
%
%
%
\section{Introduction}
For any field $K$, the set of all orderings on $K$, given by their
positive cones $P$, is denoted by $\cX(K)$. This set is non-empty if
and only if $K$ is formally real. The \bfind{Harrison topology} on
$\cX(K)$ is defined by taking as a subbasis the \bfind{Harrison sets}
\[
H(a)\>:=\>\{P\in \cX(K)\mid a\in P\}\>,\qquad a\in K\setminus \{0\}\>.
\]
With this topology, $\cX(K)$ is a boolean space, i.e., it is compact,
Hausdorff and totally disconnected (see \cite[p.~32]{l2}).

Associated with every ordering $P$ on $K$ is an $\R$-place $\lambda(P)$
of $K$, that is, a place of $K$ with image contained in $\R\cup
\{\infty\}$, which is compatible with the ordering in the sense that
non-negative elements are sent to non-negative elements or $\infty$. The
set of all $\R$-places of $K$ will be denoted by $M(K)$. The Baer-Krull
Theorem (see \cite[Theorem~3.10]{l}) shows that the mapping
\[
\lambda: \cX(K)\longrightarrow  M(K)
\]
(which we will also denote by $\lambda_K$) is surjective. Through
$\lambda$, we equip $M(K)$ with the quotient topology inherited from
$\cX(K)$, making it a compact Hausdorff space (see \cite[p.~74 and Cor.
9.9]{l}), and $\lambda$ a continuous closed mapping. According to
\cite[Theorem~9.11]{l} the subbasis for the quotient topology on $M(K)$
is given by the family of open sets of the form
\[
U(a)\>=\>\{\zeta \in M(K) \mid \zeta(a)>0\}
\]
where $a$ is in the \textbf{real holomorphy ring} of $K$, i.e., $\zeta
(a) \neq \infty$ for all $\zeta \in M(K)$. Since for every $b \in K$ the
element $\frac{b}{1+b^2}$ is in the real holomorphy ring of $K$ (see
\cite[Lemma~9.5]{l}), we have that
\[
H'(b)\>:=\>\{\zeta \in M(K) \mid \infty\ne \zeta(b)>0 \}
\>=\>U\left(\frac{b}{1+b^2}\right)
\]
is a subbasic set for every $b \in K$. So we can assume that the
topology on $M(K)$ is given by the subbasic sets $H'(b)$, $b \in K$.

\pars
Throughout this paper, $R(y)$ will always denote the rational function
field in one variable over the field $R$. For the case of real closed
$R$, we gave in \cite{kmo} a handy criterion for two orderings
on $R(y)$ to be sent to the same $\R$-place by $\lambda\,$:

\begin{theorem}                             \label{glue}
Take a real closed field $R$ and two distinct orderings $P_1,P_2$ of
$R(y)$. Then $\lambda(P_1)=\lambda(P_2)$ if and only if the cuts induced
by $y$ with respect to $P_1$ and $P_2$ in $R$ are upper and lower edge
of a ball in $R$.
\end{theorem}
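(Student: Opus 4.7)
Throughout, let $v_R$ denote the natural valuation of $R$, with valuation ring $V_R$, maximal ideal $\mathfrak{m}$, and archimedean residue field $k\subseteq\R$; the corresponding $\R$-place is the unique $\R$-place $\lambda_R$ of $R$. For each ordering $P$ of $R(y)$, the natural valuation $v_P$ of $(R(y),P)$ extends $v_R$, and $\lambda(P)$ is the composition of $v_P$ with the unique embedding of its archimedean residue field into $\R$. Since that residue field is archimedean, its ordering is unique, so
\[
\lambda(P_1)=\lambda(P_2)\ \Longleftrightarrow\ V_{P_1}=V_{P_2},
\]
where $V_{P_i}$ is the valuation ring of $v_{P_i}$. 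The plan is to match this equality of valuation rings with the geometric ``ball'' condition on the cuts $c_i$ of $y$ in $R$.

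\textbf{Direction $(\Leftarrow)$.}
Suppose $c_1,c_2$ are the lower and upper edges of a ball $B\subseteq R$. For $b\in B$ one has $y<_{P_1}b$ and $b<_{P_2}y$, so $y-b$ has opposite signs under the two orderings; this forces $\lambda(P_i)(y-b)=0$ (a definite nonzero real residue would fix the sign), hence $\lambda(P_i)(y)=\lambda_R(b)$. For $\lambda(P_i)$ to be well-defined, this value must be independent of $b\in B$, so $B$ lies in a single residue class $c_0+\mathfrak{m}$ of $\lambda_R$ and $\lambda(P_1)(y)=\lambda(P_2)(y)=:r$. To conclude the equality of $V_{P_1}$ and $V_{P_2}$, factor every $f\in R(y)$ into linear and positive quadratic pieces over the real closed field $R$; for each linear factor $y-a$ the condition $v_{P_i}(y-a)>0$ is equivalent to $\lambda_R(a)=r$, independently of $i$. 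Hence $V_{P_1}=V_{P_2}$ and $\lambda(P_1)=\lambda(P_2)$.

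\textbf{Direction $(\Rightarrow)$.}
Assume $\zeta:=\lambda(P_1)=\lambda(P_2)$ with $P_1\neq P_2$, and write $v_\zeta$ for the common natural valuation $v_{P_1}=v_{P_2}$ on $R(y)$. Since $R$ is real closed, orderings of $R(y)$ are in bijection with the cuts of $y$ in $R$, so $c_1\neq c_2$. Let $A\subseteq R$ be the non-empty convex gap between $c_1$ and $c_2$. For $a\in A$, $y-a$ has opposite signs under $P_1,P_2$, hence $\zeta(y-a)=0$ and $\lambda_R(a)=\zeta(y)=:r\in k$; so $A\subseteq c_0+\mathfrak{m}$ for any $c_0$ with $\lambda_R(c_0)=r$. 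The crucial step is to upgrade this to ``$A$ is a $v_R$-ball''. Apply the Baer--Krull theorem: $P_1$ and $P_2$ extend the same valuation and the same (uniquely determined) residue ordering, so they differ by a non-trivial character $\chi:\Gamma_{v_\zeta}/2\Gamma_{v_\zeta}\to\{\pm 1\}$, with $a\in A\iff\chi(v_\zeta(y-a))=-1$. Because $\Gamma_R$ is divisible ($R$ real closed), $\Gamma_R\subseteq 2\Gamma_{v_\zeta}$ and $\chi|_{\Gamma_R}=1$. Pick $a_0\in A$ and set $\gamma_0:=v_\zeta(y-a_0)$; then $\chi(\gamma_0)=-1$ forces $\gamma_0\notin\Gamma_R$. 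The ultrametric inequality, together with $v_R(a-a_0)\in\Gamma_R\neq\{\gamma_0\}$ for every $a\in R$, gives $v_\zeta(y-a)=\min(\gamma_0,v_R(a-a_0))$; hence $\chi(v_\zeta(y-a))=-1$ iff $v_R(a-a_0)>\gamma_0$, and
\[
A=\{a\in R:v_R(a-a_0)>\gamma_0\}
\]
is the $v_R$-ball whose lower and upper edges in $R$ are $c_1,c_2$. The main obstacle is precisely this final step: the sign-flip argument alone only places $A$ inside a residue class of $\lambda_R$, and it is the Baer--Krull decomposition together with the divisibility of $\Gamma_R$ that sharpens this to a genuine ultrametric ball.
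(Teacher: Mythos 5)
The paper itself does not reprove Theorem~\ref{glue} (it is cited from \cite{kmo}), so the question is whether your argument is correct, and unfortunately both directions have problems; the first one is fatal.

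\textbf{Direction $(\Leftarrow)$.} The opening step ``$y-b$ has opposite signs under the two orderings, hence $\lambda(P_i)(y-b)=0$'' is not a valid inference. For a \emph{single} ordering $P_1$, knowing $y-b<_{P_1}0$ only gives $\lambda(P_1)(y-b)\in\R_{\le 0}\cup\{\infty\}$; the fact that the sign is different under $P_2$ bears on $\lambda(P_2)$, not on $\lambda(P_1)$. Concluding that both residues are $0$ tacitly assumes $\lambda(P_1)=\lambda(P_2)$, which is the statement to be proved. Moreover the conclusion you draw from it, that $B\subseteq c_0+\mathfrak m$, is simply false in general: take $B=B_S(0,R)$ with $S=vR^{\ge 0}$ (the whole valuation ring). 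Then $B$ meets every residue class, $\lambda(P_i)(y)=\infty$, and $\lambda(P_i)(y-b)=\infty\neq 0$ for every $b\in B$. Finally, even granting the residue-class claim, the closing sentence ``$v_{P_i}(y-a)>0\iff\lambda_R(a)=r$, hence $V_{P_1}=V_{P_2}$'' does not establish equality of the valuation rings. Equality of $V_{P_1}$ and $V_{P_2}$ requires $v_{P_1}(f)\ge0\Leftrightarrow v_{P_2}(f)\ge0$ for \emph{every} $f$, and for a general product $f=c\prod(y-a_j)^{m_j}\prod q_k(y)^{n_k}$ this depends on the actual values $v_{P_i}(y-a_j)$ and not merely on their signs; nothing in your argument rules out, say, $v_{P_1}(y-a)=3$ while $v_{P_2}(y-a)=5$. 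What one actually has to show is that $v_{P_1}(y-a_0)$ and $v_{P_2}(y-a_0)$ realize the \emph{same} cut $(vR\setminus S,\;S)$ in $vR$ and both lie strictly outside $vR$, so that the two value groups $vR\oplus\Z\gamma_i$ are canonically isomorphic over $vR$ and the two valuations agree on all of $R(y)$; this step is absent.

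\textbf{Direction $(\Rightarrow)$.} The Baer--Krull argument is the right idea and its core is sound: $\chi|_{\Gamma_R}=1$ by divisibility of $\Gamma_R$, $\gamma_0\notin\Gamma_R$, the ultrametric computation $v_\zeta(y-a)=\min(\gamma_0,v_R(a-a_0))$, and the resulting identification of $A$ with the ball $\{a:v_R(a-a_0)>\gamma_0\}$ are all correct. However the preliminary claim ``$\zeta(y-a)=0$, hence $\lambda_R(a)=\zeta(y)=r\in k$, so $A\subseteq c_0+\mathfrak m$'' is again wrong when $\zeta(y)=\infty$ (then $\zeta(y-a)=\infty$ for every $a$, and $A$ is a large ball spanning infinitely many residue classes). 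Fortunately this passage is not actually used in the Baer--Krull computation, so it can be deleted; but as written it gives the impression that the case $\zeta(y)=\infty$ has been excluded when it has not. You should either delete the remark or note explicitly that the character argument handles $\zeta(y)=\infty$ as well, where $\gamma_0<0$ and the resulting ball is large.

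In summary: $(\Rightarrow)$ is essentially correct once the spurious residue-class remark is removed; $(\Leftarrow)$ has a genuine gap and needs to be redone by comparing the two valuations explicitly (e.g.\ via the cut that $v_{P_i}(y-a_0)$ realizes in $vR$, together with divisibility of $vR$).
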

\n
See Section~\ref{sectcb} for the notions in this theorem and for more
details.

If $R$ is any real closed field, each ordering $P$ on $R(y)$ is
uniquely determined by the cut $(D,E)$ in $R$ where $D=\{d\in R\mid
y-d\in P\}$ and $E=R\setminus D$ (cf.\ \cite{g}). Hence, if $\cC(R)$
is the set of all cuts in $R$, then we have a bijection
\[
\chi:\> \cC(R)\;\longrightarrow\;\cX(R(y))
\]
(which we will also denote by $\chi_R$). With respect to the interval
topology on $\cC(R)$ and the Harrison topology on $\cX(R(y))$, $\chi_R$
is in fact a homeomorphism (see Proposition~\ref{ch}).
Theorem~\ref{glue} can be reformulated as: {\it Two distinct cuts in $R$
are mapped by $\lambda\circ\chi$ to the same place in $M(R(y))$ if and
only if they are upper and lower edge of a ball in $R$.}

In the present paper, we put this result to work in order to find, for
given formally real extensions $F$ of a real closed field $R$,
continuous embeddings $\iota$ of $M(R(y))$ in $M(F(y))$, by finding
suitable embeddings of $\cC(R)$ in $\cC(F)$.

For any field extension $L|K$, the \bfind{restriction}
\[
\res\>=\>\res_{L|K}:\; M(L)\ni\zeta\>\mapsto\> \zeta|_K\in M(K)
\]
is continuous (see \cite[7.2.]{du}). An embedding $\iota:\,M(K)
\rightarrow M(L)$ will be called \bfind{compatible with restriction} if
$\res\circ\iota$ is the identity.

In order to determine when such embeddings of $M(R(y))$ in $M(F(y))$
exist, we have to look at the canonical valuations of the ordered fields
$R$ and $F$. The \bfind{canonical valuation} $v$ of an ordered field is
the valuation corresponding to its associated $\R$-place. If $v$ is the
canonical valuation of the ordered field $F$, then its restriction to
$R$ is the canonical valuation of the field $R$ ordered by the
restriction of the ordering of $F$, and we will denote it again by $v$.
Recall that the ordering and canonical valuation of a real closed field
are uniquely determined. By $vF$ and $vR$ we denote the respective value
groups. Then $vF|vR$ is an extension of ordered abelian groups. Note
that $vR=\{0\}$ if and only if $R$ is archimedean ordered. In
Section~\ref{embM}, we will prove:

\begin{theorem}                             \label{ce}
Take a real closed field $R$ and a formally real extension field $F$ of
$R$. A continuous embedding $\iota$ of $M(R(y))$ in $M(F(y))$ compatible
with restriction exists if and only if $vR$ is a convex subgroup of
$vF$, for some ordering of $F$. In particular, such an embedding always
exists when $R$ is archimedean ordered. If $F$ is real closed, then
there is at most one such embedding.
\end{theorem}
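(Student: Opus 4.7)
The plan is to translate the question via Theorem~\ref{glue} and the homeomorphism $\chi_R$: for any real closed field $L$, the space $M(L(y))$ is the quotient $\cC(L)/\sim_L$, where $\sim_L$ identifies the upper and lower edges of an $L$-ball. I will realize $\iota$ as the descent of a suitable cut-extension map $\Phi\colon \cC(R)\to \cC(\tilde F)$, where $\tilde F$ denotes the real closure of $F$ with respect to an appropriate ordering, and then pass to $M(F(y))$ via the restriction of orderings.

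For the sufficient direction, assume $vR$ is convex in $vF$ for some ordering $P$ of $F$, and let $\tilde F$ denote the real closure of $(F,P)$. Since $v\tilde F$ is the divisible hull of $vF$ and $vR$ is itself divisible, the convexity of $vR$ in $vF$ upgrades to convexity of $vR$ in $v\tilde F$. Define $\Phi$ by extending each $R$-cut $(D,E)$ naturally into $\tilde F$, e.g., $D^{*}:=\{f\in\tilde F:f<e\text{ for every }e\in E\}$. The heart of the argument is to verify that $\Phi$ preserves ball equivalence: an $R$-ball $B=B_R(a,\gamma)$ with $\gamma\in vR$ extends to the $\tilde F$-ball $B_{\tilde F}(a,\gamma)$ whose upper/lower edges are precisely the $\Phi$-images of the upper/lower edges of $B$; conversely, if two $\Phi$-images become $\sim_{\tilde F}$-equivalent via a common $\tilde F$-ball $B_{\tilde F}(a,\delta)$, convexity forces $\delta\in vR$, since any $\delta\notin vR$ lies either above all of $vR$ (so $B_{\tilde F}(a,\delta)\cap R$ is a single point and both $\Phi$-images restrict to the same principal $R$-cut) or below all of $vR$ (handled dually). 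Hence $\Phi$ descends to a continuous injection $\bar\Phi\colon M(R(y))\hookrightarrow M(\tilde F(y))$; composing with the restriction $M(\tilde F(y))\to M(F(y))$, which is injective on $\bar\Phi$'s image because every place in that image extends the fixed ordering $P$ on $F$, produces the required $\iota$. Compatibility with restriction to $R(y)$ is built in, and the archimedean case is immediate since $vR=\{0\}$ is convex in every ordered abelian group.

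For the necessary direction, suppose such an $\iota$ exists. Take $\zeta_\infty\in M(R(y))$ corresponding to the cut $y=+\infty$ in $R$, and set $\zeta:=\iota(\zeta_\infty)$. The canonical valuation of $\zeta$ restricts to a valuation of $F$ with archimedean residue field, hence equals $v_P$ for some ordering $P$ of $F$ (necessarily extending the unique ordering of $R$). I claim $vR$ is convex in $v_PF$. Suppose not: then there is $f\in F$ with $v_P(f)$ strictly above all of $vR_{\ge 0}$ (the symmetric case is dual). Using such an $f$, I produce two $\sim_R$-inequivalent cuts of $R$ whose $\iota$-images are the upper and lower edges of a single $F$-ball of radius $v_P(f)$, contradicting injectivity of $\iota$. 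This is the main technical obstacle of the proof: one must use the ``too fine'' scale supplied by $f$ (together with Theorem~\ref{glue}) to engineer cuts of $R$ that the $R$-ball structure fails to identify but whose $\iota$-images the $F$-ball structure does identify.

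For uniqueness, let $F$ be real closed and let $\iota_1,\iota_2$ be two compatible continuous embeddings. Then $M(F(y))=\cC(F)/\sim_F$ and each $\iota_j$ is a continuous section of the continuous restriction $\bar\rho\colon M(F(y))\to M(R(y))$. For $r\in R$, the fiber of $\bar\rho$ above $\zeta_r$ contains the $\sim_F$-class of the principal $F$-cut at $r$, as well as classes represented by principal $F$-cuts at points $r+\epsilon\in F$ with $\epsilon$ infinitesimal in $R$'s sense. But the alternative choice $\iota_j(\zeta_r)=[\zeta_{r+\epsilon}^F]$ produces a discontinuity at $\zeta_r$: the preimage of the subbasic open set $H'((y-r)/\epsilon)$ reduces (modulo sign) to the single point $\{\zeta_r\}$, which is not open in $M(R(y))$. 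Hence continuity forces $\iota_j(\zeta_r)$ to be the principal class, independently of $j$; since $\{\zeta_r:r\in R\}$ is dense in $M(R(y))$ and $M(F(y))$ is Hausdorff, this yields $\iota_1=\iota_2$.
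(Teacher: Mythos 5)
Your overall strategy — translate via Theorem~\ref{glue} and $\chi_R$ to a cut-extension map, verify preservation of ball-equivalence, descend, and use density of principal places plus Hausdorffness for uniqueness — is essentially the paper's strategy (Sections~4 and~5). However, there are three concrete defects.

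\textbf{1.\ Sufficiency: the formula for $\Phi$ fails at principal cuts.} Writing $\Phi(D,E)$ with left set $D^*=\{f\in\tilde F: f<e\text{ for all }e\in E\}$ makes $\Phi(C)$ the cut $E\mF$ for every $C=(D,E)$. Take $C_1=a\mR$, $C_2=a\pR$ for $a\in R$. Then $\Phi(C_1)=a\mF$ (the lower edge of $\{a\}$ in $\tilde F$), whereas, by Lemma~\ref{cutfill} applied to the ball complement of $\{a\}$, $\Phi(C_2)=B_S(a,\tilde F)\pF$ where $S=\{\gamma\in v\tilde F:\gamma>vR\}$. If $vR$ is not cofinal in $v\tilde F$ (in particular whenever $R$ is archimedean and $F$ is not), $S\ne\emptyset$, so $\{a\}\subsetneq B_S(a,\tilde F)$, and by Lemma~\ref{nuele} the cuts $a\mF$ and $B_S(a,\tilde F)\pF$ are inequivalent. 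Thus $\Phi$ does not descend. Your verification covers only balls $B_R(a,\gamma)$ with $\gamma\in vR$, i.e.\ excludes precisely the principal case. The paper's $\tilde\iota$ is defined asymmetrically: $\tilde\iota(B_0\mR)=D\pF$ and $\tilde\iota(B_0\pR)=E\mF$ for the ball complement $(D,E)$, which sends $a\mR\mapsto B_S(a,F)\mF$ and fixes the problem.

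\textbf{2.\ Necessity: wrong reading of non-convexity, and the key construction is absent.} ``There is $f\in F$ with $v_P(f)$ strictly above all of $vR_{\geq 0}$'' negates cofinality of $vR$, not convexity. Non-convexity means there exist $\alpha,\beta\in vR$ and $\gamma\in v_PF\setminus vR$ with $\alpha<\gamma<\beta$. The paper's negative proposition at the end of Section~4 works with exactly such a sandwiched $\gamma$, building the ball $B_0:=B_{S_0}(0,R)$ with $S_0=\{\delta\in vR:\gamma<\delta\}$ and showing no embedding of cut spaces can be compatible with restriction and continuous for the full topology. You state only that you ``produce'' the contradicting cuts; this is the heart of the direction and cannot be left unwritten, particularly since what must be produced is a full open set of $\cC(F)$ whose $\tilde\iota$-preimage is not open — not a violation of injectivity of $\iota$ on $M$.

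\textbf{3.\ Uniqueness: the conclusion for principal places is false.} You assert that continuity forces $\iota_j(\zeta_r)$ to be the class of the principal $F$-cut at $r$. In fact, when $vR$ is not cofinal in $vF$, the unique continuous choice is the class of the larger ball $B_S(r,F)$, not of $\{r\}$ (this is what the paper's $\tilde\iota$ produces, and the paper even records in Section~4 that in this case no principal cut is sent to a principal cut). Your $H'((y-r)/\epsilon)$ argument rules out the classes $[\zeta_{r+\epsilon}^F]$ but does not distinguish between $[\{r\}^F]$ and $[B_S(r,F)]$: under both choices $\iota_j(\zeta_r)\notin H'((y-r)/\epsilon)$, so the preimage is empty. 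To rule out $[\{r\}^F]$ one must use a different open set, e.g.\ the image under $\lambda_{F(y)}\circ\chi_F$ of the full open interval $(B_S(r,F)\mF,\,B_S(r,F)\pF)$ in $\cC(F)$: its $\iota_j$-preimage is $\{\zeta_r\}$ if $\iota_j(\zeta_r)=[\{r\}^F]$, contradicting continuity, while it is empty if $\iota_j(\zeta_r)=[B_S(r,F)]$. Since your argument identifies the wrong target, the subsequent density-plus-Hausdorff step inherits the error; it is the correct closing move (the paper uses it in Theorem~\ref{uniqnotrc}) but must be fed the right value at principal places.
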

For the case of $F$ not being real closed, we prove a partial uniqueness
result (Theorem~\ref{uniqnotrc}).

Let us point out a somewhat surprising consequence of the above theorem.
If $R$ is a non-archimedean real closed field and $F$ is an elementary
extension (e.g., ultrapower) of $R$ of high enough saturation, then $vR$
will not be a convex subgroup of $vF$ and there will be no such
embedding $\iota$.

\pars
In Section~\ref{embMrc} we consider the special case where $R$ is
archimedean ordered and give a more explicit construction of $\iota$ and
a more explicit proof of the uniqueness. The construction we give is of
interest also when other spaces of places are considered (e.g., spaces
of all places, together with the Zariski topology).

It is well known that for an archimedean real closed field $R$,
$M(R(y))$ is homeomorphic to the circle (over $\R$, with the usual
interval topology). In fact, this is an easy consequence of
Theorem~\ref{glue}. Hence our embedding result shows that each
$M(F(y))$ contains the circle as a closed subspace.

\parm
While spaces of orderings are well understood, this is not the case for
spaces of $\R$-places. Some important insight has been gained (see for
instance \cite{bg}, \cite{br1}, \cite{br2}, \cite{eo}, \cite{gm},
\cite{kmo}, \cite{mmo}, \cite{sch}), but several essential questions
have remained unanswered. For example, it is still an open problem which
compact Hausdorff spaces are realized as $M(F)$ for some $F$. It is
therefore important to determine operations on topological spaces (like
passage to closed subspaces, taking finite disjoint unions, taking
finite products) under which the class of realizable spaces is closed.
It has been shown in \cite{eo} that closed subspaces and finite disjoint
unions of realizable spaces are again realizable, as well as products of
a realizable space with any boolean space.

It has remained an open question whether the product of two realizable
spaces is realizable. A test case is the torus; it is not known whether
the torus (or any other subspace of $\R^n$ of dimension $>1$)
is realizable.


As $M(\R(y))$ is the circle, $M(\R(x))\times M(\R(y))$ is the torus. In
Section~\ref{embtor} we generalize our construction given in
Section~\ref{embMrc} to obtain a natural embedding of $M(\R(x)) \times
M(\R(y))$ in $M(\R(x,y))$. In view of the above-mentioned negative
result, this embedding cannot be continuous with an image that is closed
in $M(\R(x,y))$, because otherwise it would follow from the
realizability of closed subspaces that the torus is realizable. We show
an even stronger negative assertion: the image of the embedding is dense
in, while not being equal to, $M(\R(x,y))$. Hence, the image is not
closed, and the embedding is not continuous.

In the final Section~\ref{sectrp} we will show that for an arbitrary
extension $L|K$, there is a continuous embedding of $M(K)$ in $M(L)$
compatible with restriction as soon as $L$ admits a \bfind{$K$-rational
place}, that is, a place trivial on $K$ with image $K\cup\{\infty\}$. In
particular, this applies when $L$ is a rational function field over $K$.

%
%
%
%
\section{Cuts, balls and $\R$-places}       \label{sectcb}
Take any totally ordered set $T$ and $D,E\subseteq T$. We will
write $D<E$ if $d<e$ for all $d\in D$ and $e\in E$. Note that
$\emptyset<T$ and $T<\emptyset$. For $c\in T$, we will write $c>D$ if
$c>d$ for all $d\in D$, and $c<E$ if $c<e$ for all $e\in E$.

A pair $C=(D,E)$ is called a \bfind{cut in $T$} if $D<E$ and $D\cup
E=T$. In this case, $D$ is an \bfind{initial segment of $T$}, that is,
if $d\in D$ and $d>c\in T$, then $c\in D$; similarly, $E$ is a
\bfind{final segment of $T$}, that is, if $e\in E$ and $e<c\in T$, then
$c\in E$.

We include the cuts $C_{-\infty}=(\emptyset,T)$ and $C_{\infty}=
(T,\emptyset)$; the empty set is understood to be both initial and final
segment of $T$. If $C_1=(D_1,E_1)$ and $C_2=(D_2,E_2)$ are two cuts,
then we will write $C_1<C_2$ if $D_1\subsetuneq D_2\,$.

Take any non-empty subset $A$ of $T$. By $A^+$ we will denote the cut
$(D,T\setminus D)$ for which $D$ is the smallest initial segment of $T$
which contains $A$. Similarly, by $A^-$ we will denote the cut
$(T\setminus E,E)$ for which $E$ is the smallest final segment of $T$
which contains $A$.

A cut $(D,E)$ is called \bfind{principal} if $D$ has a last element or
$E$ has a first element. In the first case, the cut is equal to
$\{d\}^+$, where $d$ is the last element of $D$; in this case we will
denote it by $d^+$. In the second case, the cut is equal to $\{e\}^-$,
where $e$ is the first element of $E$; in this case we will denote it by
$e^-$.

We will need the following fact:
\begin{lemma}                               \label{pred}
If $C_1,C_2$ are cuts in $T$ such that $C_1<C_2$, then
%
%
$C_1\leq a^-<a^+\leq C_2$ for some $a\in T$.
\end{lemma}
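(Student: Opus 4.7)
The plan is to use the strict inclusion $D_1\subsetuneq D_2$ directly. Writing $C_1=(D_1,E_1)$ and $C_2=(D_2,E_2)$, the definition $C_1<C_2$ gives the existence of an element $a\in D_2\setminus D_1$; this $a$ will be the witness in the statement.

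Next I would unwind the definitions of $a^-$ and $a^+$. The smallest final segment of $T$ containing $\{a\}$ is $\{t\in T\mid t\geq a\}$, so $a^-=(\{t\mid t<a\},\{t\mid t\geq a\})$. Similarly the smallest initial segment containing $\{a\}$ is $\{t\mid t\leq a\}$, so $a^+=(\{t\mid t\leq a\},\{t\mid t>a\})$. The strict inequality $a^-<a^+$ is then immediate, since $\{t\mid t<a\}\subsetuneq\{t\mid t\leq a\}$ (the element $a$ lies in the right-hand side but not the left).

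It remains to verify the two outer inequalities. For $C_1\leq a^-$, I need $D_1\subseteq\{t\mid t<a\}$; this holds because $a\in E_1$ (as $a\notin D_1$) together with the defining property $D_1<E_1$ forces every $d\in D_1$ to satisfy $d<a$. For $a^+\leq C_2$, I need $\{t\mid t\leq a\}\subseteq D_2$, which follows at once from $a\in D_2$ and the fact that $D_2$ is an initial segment of $T$.

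There is no serious obstacle here; the argument is essentially a bookkeeping exercise with the definitions of principal cuts and initial/final segments. The only mild subtlety is remembering to handle the possibility that $D_1=\emptyset$ or $E_2=\emptyset$ (i.e.\ $C_1=C_{-\infty}$ or $C_2=C_\infty$), but in both edge cases the verifications above go through unchanged because the empty set is both an initial and a final segment of $T$.
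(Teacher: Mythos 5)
Your proposal is correct and follows exactly the same route as the paper's proof: choose $a\in D_2\setminus D_1$ and verify the chain of inequalities. The paper states the choice of $a$ and asserts the conclusion in one line, whereas you have written out the verifications in detail, but the underlying argument is identical.
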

\begin{proof}
Write $C_1=(D_1,E_1)$ and $C_2=(D_2,E_2)$. If $C_1<C_2$, then there is
some $a\in D_2\setminus D_1\,$. Then $C_1\leq a^-<a^+\leq C_2\,$.
%
\end{proof}

For any pair $(D,E)$ such that $D<E$, we define the \bfind{between set}
\[
\mbox{\rm Betw}_T (D,E)\>:=\>\{c\in T\mid D<c<E\}\>.
\]

Now consider any ordered field $F$ with its canonical valuation $v$. If
$D,E$ are any subsets of $F$, we set
\[
v(E-D)\>:=\>\{v(e-d)\mid e\in E\,,\, d\in D\}\>\subseteq
vF\cup\{\infty\}\>.
\]
The following observation is easy to prove.
\begin{lemma}                               \label{ifsvis}
Assume that $D$ is an initial segment or $E$ is a final segment of $F$.
Then $v(E-D)$ is an initial segment of $vF\cup\{\infty\}$.
\end{lemma}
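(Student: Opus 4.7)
The plan is to show that if $\alpha\in v(E-D)$ and $\beta\in vF\cup\{\infty\}$ with $\beta\le\alpha$, then $\beta\in v(E-D)$. First I would dispose of trivial cases: if $D$ or $E$ is empty, then $v(E-D)=\emptyset$ is an initial segment vacuously; and since $D<E$ is the standing context in which $v(E-D)$ is being considered, for nonempty $D,E$ every $e-d$ with $d\in D$, $e\in E$ is strictly positive, so no value in $v(E-D)$ equals $\infty$. Thus for $\alpha\in v(E-D)$ and $\beta\le\alpha$ we automatically have $\beta\in vF$ as soon as $\beta<\alpha$, and the case $\beta=\alpha$ is trivial.

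So fix $\alpha=v(e-d)$ with $d\in D$, $e\in E$, and let $\beta\in vF$ with $\beta<\alpha$. The key idea is that because $D$ is an initial segment or $E$ is a final one, we may ``slide'' one endpoint by an element of strictly smaller valuation without leaving $D$ or $E$, and the ultrametric triangle inequality with $v(e-d)\neq v(c)$ will force the new difference to have value exactly $\beta$. Concretely, pick any $c\in F$ with $v(c)=\beta$ and $c>0$. If $D$ is an initial segment, set $d':=d-c$ and $e':=e$; then $d'<d$ gives $d'\in D$, while
\[
v(e'-d')\>=\>v\bigl((e-d)+c\bigr)\>=\>\min\{v(e-d),v(c)\}\>=\>\beta,
\]
using $v(c)=\beta<\alpha=v(e-d)$. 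If instead $E$ is a final segment, the symmetric choice $e':=e+c$, $d':=d$ yields $e'\in E$ and the same computation gives $v(e'-d')=\beta$.

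I do not expect a serious obstacle here. The only subtle point is to remember the standard convention that for the canonical valuation, smaller valuation corresponds to larger archimedean class, so shifting $d$ down (or $e$ up) by $c$ with $v(c)<v(e-d)$ genuinely moves $d'$ strictly below $d$ in the order (and symmetrically for $e$), which is what lets us invoke the segment property. Apart from that, the argument is a direct application of the non-archimedean triangle equality and of the definitions of initial and final segment.
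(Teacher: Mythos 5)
The paper itself omits the proof (it merely says the lemma is ``easy to prove''), so there is no official argument to compare against; but your argument is correct and is the natural one. The central step is exactly right: if $\alpha=v(e-d)\in v(E-D)$ and $\beta\in vF$ with $\beta<\alpha$, pick $c>0$ with $v(c)=\beta$ and slide $d$ down to $d-c$ (if $D$ is initial) or $e$ up to $e+c$ (if $E$ is final), staying inside $D$ resp.\ $E$; then $e-d'=(e-d)+c$ (resp.\ $e'-d=(e-d)+c$), and since $v(c)=\beta<\alpha=v(e-d)$, the ultrametric law forces equality $v\bigl((e-d)+c\bigr)=\beta$. That is all that is needed.

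One small remark: you invoke ``$D<E$ is the standing context'' to conclude $\infty\notin v(E-D)$ and then separate out $\beta=\alpha$. That hypothesis is not actually part of the lemma's statement (only one of ``$D$ initial'' or ``$E$ final'' is assumed, with no order relation between $D$ and $E$), but fortunately you do not need it: the identical sliding-plus-ultrametric computation handles $\alpha=\infty$ as well, since $v(e-d)=\infty\neq\beta=v(c)$ still yields $v\bigl((e-d)+c\bigr)=\beta$. So the case distinction around $\infty$ can be dropped entirely, making the argument hold verbatim for arbitrary $D,E$. Everything else — including your explicit note about the sign convention (smaller valuation = larger archimedean class) guaranteeing $d-c<d$ and $e+c>e$ — is exactly the right level of care.
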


A subset $B\subseteq F$ is called a \bfind{ball} in $F$ (with respect
to the valuation $v$) if it is of the form
\[
B\;=\;B_S(a,F)\;:=\;\{b\in F\mid v(a-b)\in S\cup\{\infty\}\}
\]
where $a\in F$ and $S$ is a final segment of $vF$. We consider $S=
\emptyset$ as a final segment of $vF$; we have that $B_\emptyset(a,F)
=\{a\}$.

The notion of ``ball'' does not refer to some space over $F$, but to the
ultrametric underlying the natural valuation of $F$. Note that because
of the ultrametric triangle law, every element of a ball is a center,
that is, if $b\in B_S(a,F)$ then $B_S(a,F)=B_S(b,F)$. Therefore,
$v(b-c)\in S$ for all $b,c\in B_S(a,F)$. A subset $B$ of $F$ is a ball
if and only if for any choice of $a,b\in B$ and $c\in F$ such that
$v(a-c)\geq v(a-b)$ it follows that $c\in B$.

If $0\in B_S(a,F)$, then $B_S(a,F)=B_S(0,F)$ is a convex subgroup of the
ordered additive group of $F$. Every ball in $F$ is in fact a coset of a
convex subgroup: $B_S(a,F)=a+B_S(0,F)$.

By a \bfind{ball complement} for the ball $B=B_S(a,F)$ we will mean a
pair $(D,E)$ of subsets of $F$ such that $D<B<E$ and $F=D\cup B\cup E$.
In this case again, $D$ is an initial segment and $E$ is a final segment
of $F$.

\begin{lemma}                               \label{bcS}
If $(D,E)$ is a ball complement for $B=B_S(a,F)$, then
\[
v(E-D) \>=\> v(E-B) \>=\> v(B-D) \>=\> vF\setminus S\>.
\]
\end{lemma}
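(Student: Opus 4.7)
The plan is to prove the three inclusions $v(B-D),\,v(E-B),\,v(E-D)\subseteq vF\setminus S$ and then, for each $\alpha\in vF\setminus S$, to exhibit witnesses realizing $\alpha$ in all three sets simultaneously.

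First I would record two structural facts used throughout: since $S$ is a final segment of $vF$, its complement $vF\setminus S$ is an initial segment, and every element of $S\cup\{\infty\}$ strictly exceeds every element of $vF\setminus S$. Because $a\in B$ while the elements of $D$ and $E$ lie outside $B$, this immediately gives $v(a-d)\in vF\setminus S$ for all $d\in D$ and $v(e-a)\in vF\setminus S$ for all $e\in E$.

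For the inclusions involving $B$: given $b\in B$ and $d\in D$, the value $v(a-b)\in S\cup\{\infty\}$ strictly exceeds $v(a-d)\in vF\setminus S$, so the ultrametric isosceles-triangle principle yields $v(b-d)=v(a-d)\in vF\setminus S$; symmetrically $v(e-b)=v(e-a)$. These computations also identify $v(B-D)=\{v(a-d)\mid d\in D\}$ and $v(E-B)=\{v(e-a)\mid e\in E\}$.

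The subtle point is $v(E-D)\subseteq vF\setminus S$. The easy case $v(e-a)\neq v(a-d)$ follows from the isosceles-triangle principle, giving $v(e-d)=\min(v(e-a),v(a-d))\in vF\setminus S$ (using that $vF\setminus S$ is initial). In the equal-valuation case $v(e-d)$ could a priori jump into $S$, and this is where I expect the main obstacle to be. I would rule it out by contradiction: if $v(e-d)\in S$ then $e\in B_S(d,F)$; but $B_S(d,F)=d+B_S(0,F)$ is a coset of the convex subgroup $B_S(0,F)$, hence convex, and it contains $d$. Since $d<B<e$, convexity forces $B\subseteq B_S(d,F)$, in particular $a\in B_S(d,F)$, whence $v(a-d)\in S$, contradicting $v(a-d)\in vF\setminus S$.

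For the reverse inclusions, given $\alpha\in vF\setminus S$ I would pick $t\in F$ with $t>0$ and $v(t)=\alpha$. Since $v((a\pm t)-a)=\alpha\notin S$, neither $a-t$ nor $a+t$ lies in $B$; and because $B$ is convex with $a\in B$ and $a-t<a<a+t$, we must have $a-t\in D$ and $a+t\in E$. Then $v(a-(a-t))=\alpha\in v(B-D)$ and $v((a+t)-a)=\alpha\in v(E-B)$; finally, since $v$ is the canonical valuation we have $v(2)=0$, so $v((a+t)-(a-t))=v(2t)=\alpha\in v(E-D)$, completing the proof.
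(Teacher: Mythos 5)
Your proof is correct and follows the same basic two-step outline as the paper's (show each of the three value sets avoids $S$, then produce witnesses $a\pm t$ for each $\alpha\in vF\setminus S$), but you handle the one genuinely delicate point differently. The paper disposes of $v(E-D)\subseteq vF\setminus S$ in a single line by noting that $d<a<e$ gives $v(e-d)=\min\{v(e-a),v(a-d)\}$ \emph{even when the two values coincide}: this is the standard fact that for a valuation compatible with the ordering, two positive summands cannot cancel, so the strong triangle equality holds without the usual "distinct values" hypothesis. You instead treat the equal-valuation case by a geometric argument: if $v(e-d)$ landed in $S$, the convex coset $B_S(d,F)$ would contain both $d$ and $e$ and hence, by $d<B<e$, engulf $B$ and in particular $a$, forcing $v(a-d)\in S$, a contradiction. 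Both arguments are ultimately powered by the same underlying fact (order-compatibility of the canonical valuation, which is what makes $B_S(0,F)$ convex), but yours makes the ball geometry do the work while the paper's appeals to the arithmetic of positive residues. A second minor structural difference: the paper first nails down $v(E-D)=vF\setminus S$ and then transfers equality to $v(E-B)$ and $v(B-D)$ via Lemma~\ref{ifsvis} (all three are initial segments) and the sandwich inequalities $v(b-d),v(e-b)\ge v(e-d)$; you bypass that lemma entirely by computing $v(B-D)=\{v(a-d)\mid d\in D\}$ and $v(E-B)=\{v(e-a)\mid e\in E\}$ outright via the isosceles principle and then exhibiting witnesses for all three sets at once. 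Your route is a touch more self-contained; the paper's is slightly shorter if one already has Lemma~\ref{ifsvis} in hand.
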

\begin{proof}
First, we show that $v(E-D)=vF\setminus S$.
For $d\in D$ and $e\in E$, we have that $v(a-d)<S$ and $v(e-a)<S$
because $d,e\notin B$. From $d<a<e$ it then follows that
$v(e-d)=\min\{v(e-a),v(a-d)\}<S$. This proves that $v(E-D)<S$.

Now take $\alpha\in vF$, $\alpha<S$. Choose $0<c\in F$ such that
$vc=\alpha$. Then $v(a-(a-c))=vc=\alpha$, whence $a-c\notin B$
and therefore, $d:=a-c\in D$. Similarly, $a+c\notin B$ and
therefore, $e:=a+c\in E$. Since $d<a<e$, we find $\alpha=v(2c)=v(e-d)\in
v(E-D)$. Since $v(E-D)$ is an initial segment of $vF\cup\{\infty\}$ by
Lemma~\ref{ifsvis}, and $S$ is a final segment, we can now conclude that
$v(E-D)=vF\setminus S$.

\parm
Again by Lemma~\ref{ifsvis}, also $v(E-B)$ and $v(B-D)$
are initial segments of $vF\cup\{\infty\}$.
%
%
If $d\in D$, $e\in E$ and $b\in B$, then $d<b<e$, whence
$v(b-d)\geq v(e-d)$ and $v(e-b)\geq v(e-d)$. Consequently, $v(E-D)$ is
contained in $v(E-B)$ and $v(B-D)$. On the other hand, $d,e\notin B$
implies that $v(b-d),v(e-b)<S$. So by what we have proved earlier,
$v(b-d),v(e-b)\in v(E-D)$. This shows that all three sets are equal.
\end{proof}

We will say that a cut is the lower edge of the ball $B=B_S(a,F)$ if it
is the cut $B^-$; similarly, a cut is said to be the upper edge of the
ball $B$ if it is the cut $B^+$. Two cuts will be called
\bfind{equivalent} if they are either equal or one is the lower edge
$B^-$ and the other is the upper edge $B^+$ of a ball $B$.

A cut of the form $B^+$ or $B^-$ for $B$ a ball will be called a
\bfind{ball cut}. Principal cuts in $F$ are ball cuts: $a^+=\{a\}^+=
B_{\emptyset}(a,F)^+$ and $a^-=\{a\}^-= B_{\emptyset}(a,F)^-$.

If a cut is neither the lower nor the upper edge of a ball, then we call
it a \bfind{non-ball cut}. The equivalence class of a non-ball cut is a
singleton. As the following lemma will show, the equivalence class of a
ball cut consists of two distinct cuts.

\begin{lemma}                               \label{nuele}
If a cut is the upper or the lower edge of a ball in $F$, then the ball
is uniquely determined. In particular, $B_1^+=B_2^-$ for two balls $B_1$
and $B_2$ is impossible. Therefore, equivalence classes of balls contain
at most two cuts.
\end{lemma}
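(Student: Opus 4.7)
The plan is to prove both assertions by a short case analysis on the relative position of $B_1$ and $B_2$, using the fact (already developed in the excerpt) that every ball is a coset of a convex subgroup of $(F,+)$ and that two convex subgroups of an ordered abelian group are always comparable. Consequently, two balls in $F$ are either nested or disjoint, and I will handle each possibility separately.

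For the uniqueness of the ball giving a prescribed upper edge, I suppose $B_1^+ = B_2^+$. If $B_1 \subsetneq B_2$, I use the freedom to choose any center in each ball to arrange $B_1 = b + H_1$ and $B_2 = b + H_2$ with convex subgroups $H_1 \subsetneq H_2$. Picking any $h \in H_2 \setminus H_1$ with $h > 0$ (possible since $H_2 \setminus H_1$ is closed under negation), I obtain $b+h \in B_2$ with $b+h > B_1$ by convexity of $H_1$; hence $b+h$ lies in the upper half of $B_1^+$, while $b+h \in B_2$ places it in the lower half of $B_2^+$, contradicting equality of the cuts. If instead $B_1$ and $B_2$ are disjoint with $B_1 < B_2$, then any $b_2 \in B_2$ lies in the upper half of $B_1^+$ but in the lower half of $B_2^+$, again a contradiction. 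The reverse inclusion and the analogous statement for lower edges are identical.

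The impossibility of $B_1^+ = B_2^-$ is where the real content lies. The nested cases are handled by the same style of argument, so I focus on $B_1 \cap B_2 = \emptyset$ with $B_1 < B_2$. Writing $B_i = B_{S_i}(b_i,F)$, I consider $c := (b_1 + b_2)/2$. The key computation is $\gamma := v(b_2 - b_1) = v(c - b_1) = v(c - b_2)$, using $v(2) = 0$ for the canonical valuation. This value $\gamma$ is independent of the chosen centers: for any $b_1' \in B_1$, the ultrametric equality applied to $b_2 - b_1' = (b_2 - b_1) + (b_1 - b_1')$ gives $v(b_2 - b_1') = \gamma$, because $v(b_1 - b_1') \in S_1 \cup \{\infty\}$ lies strictly above $\gamma$ (the hypothesis $b_2 \notin B_1$ forces $\gamma \notin S_1$, and $S_1$ is a final segment of $vF$). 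The symmetric argument with $b_1 \notin B_2$ gives $\gamma \notin S_2$, so $c$ lies in neither ball; combined with $b_1 < c < b_2$ and the convexity of each ball, this forces $B_1 < c < B_2$. But then $c$ lies simultaneously in the upper half of $B_1^+$ and in the lower half of $B_2^-$, contradicting the assumed equality of the cuts.

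The step I expect to be most delicate is the representative-independence of $\gamma$; everything else is bookkeeping about which half of a cut contains a given element. The final sentence of the lemma is then immediate: since each ball cut $B^+$ or $B^-$ determines $B$, and since $B^+ \ne B^-$ (there are elements of $F$ strictly above $B$, as one sees by the same midpoint construction when $B \ne F$, or the statement is vacuous otherwise), an equivalence class contains at most the two cuts $B^-$ and $B^+$ of a single ball.
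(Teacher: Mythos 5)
Your proof is correct and follows essentially the same path as the paper's: the decisive step in both is the midpoint trick $c=(b_1+b_2)/2$ (the paper's $d'=(d+a)/2$), which produces an element equidistant from both centers in the ultrametric and hence lying strictly between the two balls, contradicting $B_1^+=B_2^-$. A small organizational difference: for the uniqueness of the ball, the paper dispenses with the disjoint case at once by observing that both balls are final segments of the common left cut set of $B^+$, so their intersection is automatically non-empty; you instead run an explicit nested/disjoint case analysis, which reaches the same conclusion with a bit more work. Two minor points worth tidying: the representative-independence of $\gamma$ is not actually needed (the facts $b_2\notin B_1$ and $b_1\notin B_2$ already give $\gamma\notin S_1$ and $\gamma\notin S_2$ directly), and the parenthetical justifying $B^+\ne B^-$ should simply note that any element of $B$ lies in the left cut set of $B^+$ but in the right cut set of $B^-$, which needs no elements above $B$ and works even when $B=F$.
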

\begin{proof}
We show the assertion for a cut $B^+=B_S(a,F)^+$; the case of
$B_S(a,F)^-$ is similar.

Take any $d\in F$ and some final segment $T$ of $vF$. Suppose that
$B^+=B_T(d,F)^+$. Since the balls $B_S(a,F)$ and $B_T(d,F)$ are final
segments of the left cut set of $B^+$, their intersection is non-empty.
So one of them is contained in the other. If they were not equal, the
bigger one would contain an element which is bigger than all elements in
the smaller ball, but that is impossible.

Now suppose that $B^+=B_T(d,F)^-$. Then $d>B_S(a,F)$, so $v(a-d)<S$.
Similarly, $a< B_T(d,F)$, so $v(a-d)<T$. Set $d':=(d+a)/2$; then
$d<d'<a$ and $v(a-d')= v(a-d)=v(d'-d)$. Consequently, $d'>B_S(a,F)$
and $d'<B_T(d,F)$, a contradiction.
\end{proof}

In combination with Theorem~\ref{glue}, this lemma shows that the mapping
$\lambda$ will glue not more than two orderings into one $\R$-place. The
other, quite different way of proof is by an application of the
Baer-Krull Theorem.

\begin{proposition}
Take a real closed field $F$. Then for every $\zeta\in M(F(y))$, the
preimage $\lambda^{-1}(\zeta)$ consists of at most two orderings.
\end{proposition}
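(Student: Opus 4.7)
The plan is to deduce the statement directly from Theorem~\ref{glue} together with Lemma~\ref{nuele}. Since $F$ is real closed, the bijection $\chi = \chi_F : \cC(F) \to \cX(F(y))$ identifies orderings on $F(y)$ with cuts in $F$ (via $P \mapsto (D,E)$ with $D = \{d \in F \mid y - d \in P\}$). It therefore suffices to show that any fiber of $\lambda \circ \chi : \cC(F) \to M(F(y))$ has cardinality at most two.

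Fix $\zeta \in M(F(y))$ and suppose $P_1, P_2, P_3$ are three distinct orderings in $\lambda^{-1}(\zeta)$, with corresponding cuts $C_1, C_2, C_3 \in \cC(F)$. By Theorem~\ref{glue}, for each pair $i \neq j$ the cuts $C_i$ and $C_j$ are the lower and upper edges of some ball $B_{ij}$ in $F$; in particular each $C_i$ is a ball cut, i.e.\ of the form $B^+$ or $B^-$. Lemma~\ref{nuele} asserts that whenever a cut is the upper (resp.\ lower) edge of a ball, the ball is uniquely determined, and moreover the cases $B_1^+ = B_2^-$ for two balls $B_1, B_2$ cannot both occur simultaneously; consequently an equivalence class of cuts contains at most two elements. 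This forces two of $C_1, C_2, C_3$ to coincide, contradicting our assumption.

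Thus $\lambda^{-1}(\zeta)$ has at most two preimages. The key ingredient --- and really the only nontrivial step --- is Lemma~\ref{nuele}; once the equivalence class of a ball cut is known to contain at most two cuts, the proposition is immediate from the bijective correspondence between orderings on $F(y)$ and cuts in $F$, combined with Theorem~\ref{glue}. As the paragraph preceding the proposition notes, an alternative and independent proof can be given via the Baer-Krull Theorem by analyzing the liftings of the canonical valuation of $\zeta$ to orderings of $F(y)$; but the route through Theorem~\ref{glue} and Lemma~\ref{nuele} is the most economical one in the framework already established.
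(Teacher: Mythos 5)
Your proof is correct and follows exactly the route the paper has in mind: the proposition is stated immediately after the remark that ``in combination with Theorem~\ref{glue}, this lemma [Lemma~\ref{nuele}] shows that $\lambda$ will glue not more than two orderings into one $\R$-place.'' You have simply made that deduction explicit, via the bijection $\chi_F$ between $\cC(F)$ and $\cX(F(y))$ and the fact that equivalence classes of cuts have size at most two.
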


\pars
Let us add the following observation:

\begin{proposition}                           
For every formally real field $F$, the mapping $\lambda:\cX(F)\rightarrow
M(F)$ induces \bfind{continuous glueings}, that is, if $P_1,P_2\in
\cX(F)$ such that for every pair of open neighborhoods $U_1$ of $P_1$
and $U_2$ of $P_2$ there are $Q_1\in U_1$ and $Q_2\in U_2$ with
$\lambda(Q_1)=\lambda(Q_2)$, then $\lambda(P_1)=\lambda(P_2)$.
\end{proposition}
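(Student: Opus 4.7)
The plan is to argue by contrapositive, relying only on two general facts established earlier in the paper: $\lambda:\cX(F)\rightarrow M(F)$ is continuous, and $M(F)$ is Hausdorff. The hypothesis of the proposition says that $(P_1,P_2)$ cannot be separated by open neighborhoods whose $\lambda$-images are disjoint; the conclusion says their $\lambda$-images coincide. This is exactly the shape of a statement that follows from continuity into a Hausdorff space.

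Concretely, I would assume $\lambda(P_1)\neq \lambda(P_2)$ and produce neighborhoods witnessing a failure of the hypothesis. Using Hausdorffness of $M(F)$, I would choose disjoint open sets $V_1,V_2\subseteq M(F)$ with $\lambda(P_i)\in V_i$. Setting $U_i:=\lambda^{-1}(V_i)$ gives open neighborhoods of $P_i$ in $\cX(F)$ by continuity of $\lambda$. For any $Q_1\in U_1$ and $Q_2\in U_2$ we have $\lambda(Q_i)\in V_i$, and since $V_1\cap V_2=\emptyset$, necessarily $\lambda(Q_1)\neq \lambda(Q_2)$. This contradicts the assumed ``continuous gluing'' property, so $\lambda(P_1)=\lambda(P_2)$.

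There is essentially no obstacle here; the content of the proposition is purely topological, and the Hausdorff/continuity input is already recorded in the paragraphs preceding Theorem~\ref{glue}. The only thing one has to be careful about is not to confuse the ``gluing'' formulation with something stronger: the hypothesis is required for \emph{every} pair of neighborhoods, which is precisely the right quantifier to make the contrapositive argument go through.
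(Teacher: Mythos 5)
Your proof is correct and is essentially the paper's own argument: contrapositive, separate $\lambda(P_1)$ and $\lambda(P_2)$ by disjoint opens using Hausdorffness of $M(F)$, and pull back along the continuous map $\lambda$ to obtain the required neighborhoods $U_1,U_2$. The only cosmetic difference is that you conclude directly from $\lambda(Q_i)\in V_i$ and $V_1\cap V_2=\emptyset$, whereas the paper phrases the final step via $U_1\cap U_2=\emptyset$; both are instantly equivalent here.
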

\begin{proof}
Take two orderings $P_1,P_2\in \cX(F)$ such that $\lambda(P_1)\ne
\lambda(P_2)$. Since $M(F)$ is Hausdorff, there are disjoint open
neighborhoods $U'_1$ of $\lambda(P_1)$ and $U'_2$ of $\lambda(P_2)$.
Their preimages $U_1:=\lambda^{-1}(U'_1)$ and $U_2:=\lambda^{-1}(U'_2)$
are open neighborhoods of $P_1$ and $P_2\,$, respectively. Since $U_1
\cap U_2= \emptyset$, there cannot exist any orderings $Q_1\in U_1$ and
$Q_2\in U_2$ such that $\lambda(Q_1)=\lambda(Q_2)$.
\end{proof}

%
%
%
%
\section{Topologies on $\cC(F)$ and $\cX(F)$}
Take any ordered field $F$. We have already defined the ordering on
$\cC(F)$. Intervals are defined as in any other linearly ordered set.
Note that the linear order of $\cC(F)$ has endpoints $C_{\infty}$ and
$C_{-\infty}\,$.


The \bfind{interval topology} on $\cC(F)$ (like on every other linearly
ordered set with endpoints) is defined by taking as basic open sets all
intervals of the form $(C_1,C_2)=\{C\in \cC(F)\mid C_1<C<C_2\}$ for any
two cuts $C_1,C_2\in \cC(F)$, together with $(C_1,C_{\infty}]$ if
$C_1\ne C_{\infty}$, and $[C_{-\infty},C_2)$ if $C_{-\infty}\ne C_2$.

Note that in the interval topology on $\cC(F)$, an open interval may have
a first or a last element different from $C_{\infty},C_{-\infty}$.
Indeed, if $C=a^+$ is a principal cut and $C_1<a^-$, then $(C_1,a^+)$
has last element $C$. Similarly, if $C=a^-$ and $a^+<C_2$, then
$(a^-,C_2)$ has first element $C$. However, this is the only way in
which first and last elements will arise in open intervals:

\begin{lemma}                               \label{oi}
Take an interval $I$ that is open in the interval topology. If $C$ is
the first element of $I$, then $C=a^+$ for some $a\in F$. If $C$ is the
last element of $I$, then $C=a^-$ for some $a\in F$.
\end{lemma}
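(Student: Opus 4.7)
The plan is to exploit the openness of $I$ at its extremal point $C$ to obtain a basic open neighborhood of $C$ inside $I$, and then invoke Lemma \ref{pred} to produce a strictly smaller principal cut inside that neighborhood, unless $C$ itself already has the required principal form. I will implicitly assume $C \neq C_{-\infty}$ (resp.\ $C \neq C_{\infty}$) for the first-element (resp.\ last-element) assertion, since these endpoint cuts of $\cC(F)$ are not of the form $a^+$ or $a^-$, and the remark preceding the lemma indicates that they are to be excluded from consideration.

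For the first-element assertion I would first extract a basic open neighborhood of $C$ inside $I$; since $C \neq C_{-\infty}$, it must have the form $(C_1, C_2)$ or $(C_1, C_{\infty}]$ with $C_1 < C$. I would then apply Lemma \ref{pred} to $C_1 < C$ to obtain some $a \in F$ with $C_1 \leq a^- < a^+ \leq C$, and note in particular that $C_1 < a^+$. The decisive step is to rule out the strict inequality $a^+ < C$: if it held, then $a^+$ would lie in $(C_1, C)$ and hence in $I$, yet $a^+ < C$ would contradict $C$ being the minimum of $I$. Therefore $a^+ = C$.

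The last-element assertion follows by the symmetric argument. I would take a basic open neighborhood of $C$ of the form $(C_1, C_2)$ or $[C_{-\infty}, C_2)$ with $C < C_2$, apply Lemma \ref{pred} to $C < C_2$ to obtain $a$ with $C \leq a^- < a^+ \leq C_2$, and rule out $a^- > C$ by exactly the same contradiction, forcing $a^- = C$. There is no real obstacle here: the argument reduces to a direct application of Lemma \ref{pred} combined with the explicit description of the basic open sets of the interval topology on $\cC(F)$.
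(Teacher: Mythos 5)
Your proof is correct and follows essentially the same route as the paper: reduce to a basic open interval whose left endpoint $C_1$ satisfies $C_1 < C$, then invoke Lemma~\ref{pred} to produce $a$ with $C_1 \leq a^- < a^+ \leq C$ and conclude $a^+ = C$. The only presentational difference is that the paper first asserts (without proof) that a finite intersection or arbitrary union of basic intervals can only have a first/last element if one of the constituent intervals does, whereas you simply take a basic open neighborhood of $C$ inside $I$ and derive the contradiction from minimality of $C$ in $I$ --- a slightly cleaner reduction; you are also more explicit than the paper about excluding $C=C_{\pm\infty}$, which is indeed a tacit hypothesis of the lemma.
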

\begin{proof}
A finite intersection or arbitrary union of intervals of the form
$(C_1,C_2)$ will only have a first or last element if that is already
true for one of the intervals. Suppose that $C$ is the first element of
$I$; the case of $C$ being the last element is similar. Then
$C$ is the first element of an interval $(C_1,C_2)$, which means that
there is no cut properly between $C_1$ and $C$. Therefore, our assertion
follows from Lemma~\ref{pred}.
\end{proof}

Let us also note that Lemma~\ref{pred} implies:
\begin{lemma}                               \label{pcdens}
The principal cuts lie dense in $\cC(F)$.
\end{lemma}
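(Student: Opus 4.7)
The plan is to show that every non-empty basic open set in the interval topology on $\cC(F)$ contains a principal cut. By definition of the interval topology, such a basic open set is one of $(C_1,C_2)$, $(C_1,C_\infty]$, or $[C_{-\infty},C_2)$. I would first observe that $C_\infty=(F,\emptyset)$ and $C_{-\infty}=(\emptyset,F)$ are themselves not principal cuts, since $F$ is an ordered field and so has no greatest or least element. Consequently, for a half-open basic set it suffices to find a principal cut in the strictly open subinterval, and so the whole problem reduces to showing that every non-empty open interval $(C_1,C_2)$ with $C_1<C_2$ contains a principal cut.

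Given such an interval, I would apply Lemma~\ref{pred} directly to the inequality $C_1<C_2$ to obtain some $a\in F$ with $C_1\le a^-<a^+\le C_2$. If either $C_1<a^-$ or $a^+<C_2$ holds, then one of the principal cuts $a^-$, $a^+$ lies strictly between $C_1$ and $C_2$, and we are done. The only remaining possibility is that $C_1=a^-$ and $C_2=a^+$ simultaneously, in which case $(C_1,C_2)=(a^-,a^+)$. But this last interval is empty: a cut strictly between $a^-$ and $a^+$ would correspond to an initial segment $D$ of $F$ with $\{d\in F\mid d<a\}\subsetuneq D\subsetuneq\{d\in F\mid d\le a\}$, which is impossible because these two initial segments differ only in the single element $a$. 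This contradicts the assumed non-emptiness of $(C_1,C_2)$, so the degenerate configuration cannot arise.

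There is no real obstacle in the argument: the substance is already packaged inside Lemma~\ref{pred}, and the only care required is to dispose of the edge case in which the two principal cuts $a^-,a^+$ produced by that lemma coincide with the endpoints $C_1,C_2$ of the given open interval. This is handled by the elementary observation that $(a^-,a^+)$ is empty.
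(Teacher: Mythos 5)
Your proof is correct and follows exactly the route the paper intends, namely applying Lemma~\ref{pred}; the paper offers no written proof, so your filling-in of the details (including the edge case where $(C_1,C_2)=(a^-,a^+)$ is empty) is the expected argument. The only slightly elided point is that for the half-open basic sets $(C_1,C_\infty]$ and $[C_{-\infty},C_2)$ one should verify the strict subinterval is non-empty, but this follows at once from your own observation that $C_\infty,C_{-\infty}$ are not principal together with your characterization of when $(C_1,C_2)$ is empty.
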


\pars
A subset of $\cC(F)$ will be called \bfind{full} if it is closed under
equivalence. We define the \bfind{full topology} on $\cC(F)$ to
consist of all full sets that are open in the interval topology.
This topology is always strictly coarser than the interval topology
because in the latter there are always open sets containing $C_{\infty}$
without containing $C_{-\infty}\,$. Hence it is not Hausdorff, but it is
quasi-compact.

\begin{proposition}                       \label{fullballs}
Let $B$ be a ball in F. Then the intervals $[B^-,B^+]$, $(B^-,B^+)$ and
their complements are full in $\cC(F)$.
\end{proposition}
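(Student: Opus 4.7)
A set $X\subseteq \cC(F)$ is full if and only if, for every ball $B'$ in $F$, the pair $\{B'^-,B'^+\}$ lies entirely inside or entirely outside $X$; by Lemma~\ref{nuele} all other equivalence classes are singletons. So it suffices to check this edges-together property for each of the four sets in the statement, and my plan is to do so via a single structural fact about how the intervals $[B'^-,B'^+]$ sit inside $\cC(F)$ relative to $[B^-,B^+]$.

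The structural fact I want to prove is: for any two balls $B,B'\subseteq F$, the intervals $[B^-,B^+]$ and $[B'^-,B'^+]$ are nested or disjoint. More precisely, $B'\subseteq B$ gives $[B'^-,B'^+]\subseteq [B^-,B^+]$, and $B\cap B'=\emptyset$ gives $[B'^-,B'^+]\cap [B^-,B^+]=\emptyset$. The initial input is the standard ultrametric fact that two balls in $F$ are either disjoint or comparable by inclusion, which in the present setting follows from the description of a ball as a coset of a convex subgroup of $(F,+)$. To handle the nested case I translate $\leq$ on $\cC(F)$ into inclusion of left sets: if $B'\subseteq B$, then every element below all of $B$ is below all of $B'$, giving $B^-\leq B'^-$, and dually $\{x:x<B'\}\cup B'\subseteq \{x:x<B\}\cup B$ gives $B'^+\leq B^+$. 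The delicate point is the disjoint case, say $B'<B$: the inclusion $\{x:x<B'\}\cup B'\subseteq \{x:x<B\}$ is immediate, and for strictness I would pick $b'\in B'$ and $b\in B$ and consider the midpoint $c=(b'+b)/2$, computing $v(c-b')=v(c-b)=v(b-b')$ and noting that this valuation lies outside the defining final segments of both balls (since $B,B'$ are disjoint), so $c$ belongs to neither ball and, by order-convexity, lies strictly between them. This $c$ witnesses the strict inclusion, and Lemma~\ref{nuele} additionally rules out the degenerate coincidence $B'^+=B^-$.

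Once the structural fact is in hand, each of the four claims reduces to a short case analysis. For $[B^-,B^+]$: both edges of $B'$ lie in this set iff $B'\subseteq B$, and both lie outside in the other two cases. For $(B^-,B^+)$: both edges lie inside iff $B'\subsetneq B$, since $B'=B$ puts them at the excluded endpoints; in the remaining cases $B\subseteq B'$ or $B\cap B'=\emptyset$ both lie strictly outside, using the strict inequalities from Lemma~\ref{nuele}. The statements for the complements then follow immediately by negation. The main obstacle, and the only genuinely non-formal step, is the midpoint/valuation estimate establishing strict separation of the intervals in the disjoint case; everything else is routine bookkeeping between cuts and their left sets.
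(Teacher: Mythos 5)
Your proof is correct and follows essentially the same route as the paper: reduce fullness to checking that both edges of each ball $B'$ land together inside or outside the set, then case-split on whether $B'$ and $B$ are nested or disjoint (the ultrametric dichotomy). The paper does the case analysis directly and cites Lemma~\ref{nuele} for the strict separations, whereas you repackage the nesting as a ``structural fact'' and re-derive the strictness via the midpoint computation --- but that midpoint argument is the same one the paper already used to prove Lemma~\ref{nuele}, so the two proofs are equivalent in substance.
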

\begin{proof}
Take any ball $B_1$ in $F$. If $B_1\cap B=\emptyset$, then both $B_1^+$
and $B_1^-$ lie in the complement of $(B^-,B^+)$, and by
Lemma~\ref{nuele}, also in the complement of $[B^-,B^+]$.

If $B_1\cap B\ne\emptyset$, then $B_1\subseteq B$ or $B\subsetuneq
B_1\,$. In the latter case again, both $B_1^+$ and $B_1^-$ lie in the
complements of $[B^-,B^+]$ and $(B^-,B^+)$. If $B_1\subsetuneq B$, then
both $B_1^+$ and $B_1^-$ lie in $[B^-,B^+]$ and in $(B^-,B^+)$. Finally,
if $B_1=B$, then both $B_1^+$ and $B_1^-$ lie in $[B^-,B^+]$ and in the
complement of $(B^-,B^+)$.
\end{proof}

Let us also observe:
\begin{lemma}                               \label{contresC}
If $F|R$ is an extension of ordered fields, then the restriction mapping
$\res: \cC(F)\rightarrow \cC(R)$ preserves $\leq$ and equivalence and is
continuous in both the interval and the full topology. The preimage of
every full subset of $\cC(R)$ under $\res$ is again full.
\end{lemma}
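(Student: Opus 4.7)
I would first fix notation: for $C=(D,E)\in\cC(F)$, set $\res(C):=(D\cap R,\, E\cap R)$, which is evidently a cut in $R$. Preservation of $\leq$ is then immediate, since $D_1\subseteq D_2$ implies $D_1\cap R\subseteq D_2\cap R$.

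The only real content is preservation of equivalence, for which the case to handle is the pair $B^-,B^+$ of edges of a ball $B=B_S(a,F)$ in $F$. If $B\cap R=\emptyset$ then every element of $R$ is either less than all of $B$ or greater than all of $B$, so $\res(B^-)$ and $\res(B^+)$ coincide and are trivially equivalent. In the remaining case I would pick $b_0\in B\cap R$ and observe that, writing $B=B_S(b_0,F)$, the intersection $B':=B\cap R$ equals $B_{S\cap vR}(b_0,R)$ and is therefore a ball in $R$. The crux is to show $\res(B^\pm)=(B')^\pm$; this reduces to the identities
\[
\{r\in R:r<B\}\;=\;\{r\in R:r<B'\}\quad\text{and}\quad\{r\in R:r>B\}\;=\;\{r\in R:r>B'\}.
\]
One inclusion is obvious from $B'\subseteq B$; for the other, if $r\in R$ satisfies $r<B'$ but some $b\in B$ has $b\leq r$, then $r\notin B$ (as $r\notin B'$), and choosing any $b'\in B'$ yields $b<r<b'$ with $b,b'\in B$, contradicting the convexity of $B$ as a coset of a convex subgroup. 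I expect this convexity step to be the only delicate point of the lemma.

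Continuity in the interval topology I would verify by the usual principal-cut trick. Given $C\in\cC(F)$ with $\res(C)\in(C_1,C_2)$, two applications of Lemma~\ref{pred} inside $\cC(R)$ yield $a,b\in R$ with
\[
C_1\;\leq\;a^-\;<\;a^+\;\leq\;\res(C)\;\leq\;b^-\;<\;b^+\;\leq\;C_2,
\]
and since $a,b\in R\subseteq F$ the interval $(a^-,b^+)\subseteq\cC(F)$ is an open neighborhood of $C$ whose image under $\res$ lands inside $(C_1,C_2)$, as one checks directly from the order-preservation step; the endpoint intervals $[C_{-\infty},C_2)$ and $(C_1,C_\infty]$ are handled analogously by dropping one side. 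Finally, both the preimage-of-full statement and continuity in the full topology fall out of what has already been done: because $\res$ preserves equivalence, the preimage of any full set is again full, and when that set is additionally open in the interval topology the preimage is open by the preceding paragraph.
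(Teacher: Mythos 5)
Your proof is correct, and the overall architecture matches the paper: establish order preservation, show $B\cap R$ is a ball and hence that $\res$ carries $B^{\pm}$ to $(B\cap R)^{\pm}$, deduce equivalence preservation, prove interval-topology continuity, and then observe that fullness of preimages plus the interval-topology continuity gives full-topology continuity for free. Where you diverge is in the continuity step: the paper argues that $\res^{-1}$ of a convex set is convex (immediate from order preservation) and then disposes of possible endpoints, while you apply Lemma~\ref{pred} twice to sandwich $\res(C)$ between principal cuts $a^+$ and $b^-$ with $a,b\in R$ and pull back the interval $(a^-,b^+)$ to $\cC(F)$. Both are sound; yours is arguably tighter because it avoids the slightly delicate classification of endpoints of open intervals. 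One small gloss: the verification that $\res\bigl((a^-,b^+)\bigr)\subseteq (C_1,C_2)$ does not follow purely from $\res$ preserving $\leq$ — you also need the adjacency $a^-\lessdot a^+$ in $\cC(F)$ to pass from $a^-<C_0$ to $a^+\leq C_0$, and only then does monotonicity of $\res$ give $a^+\leq\res(C_0)$ in $\cC(R)$, hence $\res(C_0)>C_1$. Likewise on the right with $b^-\lessdot b^+$. Similarly, your equivalence argument supplies the detail (via convexity of $B$) that the paper simply asserts, and that is a welcome addition; the remaining two claims are handled exactly as in the paper.
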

\begin{proof}
It is clear that $\res$ preserves $\leq$. Hence, the preimage of every
convex set in $\cC(R)$ is convex in $\cC(F)$. Therefore, if $I$ is an open
interval in $\cC(R)$, then its preimage $I'$ is convex, and if it has no
smallest and no largest element, then it is open. If it has a smallest
element $C'$, then $\res (C')$ is the smallest element of $I$, hence
equal to $C_{-\infty}$ in $\cC(R)$. Therefore, $I'$ contains the cut
$C_{-\infty}$ of $\cC(F)$, whence $C'=C_{-\infty}$. Similarly, a largest
element of $I'$ can only be equal to $C_{\infty}$ in $\cC(R)$. It
follows that $I'$ is open. We have proved that $\res$ is continuous with
respect to the interval topology.

Suppose that $B$ is a ball in $F$. Then $B_0=B\cap R$ is either empty or
a ball in $R$. In the first case, $\res B^-=\res B^+$, and in the second
case, $\res B^-=B_0^-$ and $\res B^+=B_0^+$. This proves that
$\res$ preserves equivalence. This implies that the preimage $U'$ of a
full set $U$ is again full: if $C_1\in U'$ is equivalent to $C_2$, then
$\res (C_1)\in U$ and $\res (C_2)$ are equivalent, whence $\res (C_2)\in
U$ and $C_2\in U'$. From this and the continuity shown above it follows
that $\res$ is continuous with respect to the full topology.
\end{proof}

Take any ordered field $L$. The notion ``full'' was introduced in
\cite{H} for $\cX(L)$, but only for the Harrison sets. We generalize
the definition to arbitrary subsets $Y$ of $\cX(L)$ by calling $Y$
\bfind{full} if $\lambda_L^{-1} (\lambda_L(Y)) =Y$. We will call two
orderings $P_1,P_2\in \cX(L)$ \bfind{equivalent} if $\lambda(P_1)=
\lambda(P_2)$. Hence, $Y$ is full if and only if it is closed under
equivalence.

Note that the intersection of finitely many full sets is again a full
set and the union of any family of full sets is also a full set. We
define the \bfind{full topology} on $\cX(L)$ by taking as open sets all
full sets that are open in the Harrison topology. In general, this
topology is strictly coarser than the Harrison topology and hence not
Hausdorff, but it is always quasi-compact.

\begin{remark}                         \label{full}
1) \ If $Y$ is a full open (or closed) subset of $\cX(L)$, then
$\lambda(Y)$ is an open (or closed, respectively) subset of $M(L)$.
\sn
2) \ For any $U \subset M(L)$, $\lambda^{-1}(U)$ is a full subset of
$\cX(L)$.
\sn
3) \ Take any extension $L|K$ of ordered fields. Then in the diagram
\begin{displaymath}
{
\begin{array}{ccc}
 \mathcal X(L)& \stackrel {\lambda_L}{\longrightarrow }   & M(L)\\\\
{\res}{ \Big {\downarrow}} & &{\res}{\Big  \downarrow} \\\\
 \mathcal X(K)&\stackrel {\lambda_K}{\longrightarrow }     & M(K)
\end {array}
}
\end{displaymath}
the restriction mappings are continuous, and the diagram
commutes (see \cite[7.2.]{du}). Being continuous
mappings from compact spaces to Hausdorff spaces, the
restriction mappings are also closed and proper.
\end{remark}

The analogue of Lemma~\ref{contresC} is:
\begin{lemma}                               \label{contresX}
If $L|K$ is an extension of ordered fields, then the restriction mapping
$\res: \cX(L)\rightarrow \cX(K)$ preserves equivalence and is continuous
w.r.t.\ both the Harrison and the full topology. The preimage of every
full set in $\cX(R)$ under $\res$ is again full.
\end{lemma}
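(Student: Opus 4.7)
The plan is to imitate the structure of Lemma~\ref{contresC}, but here the key ingredient is the commutative square in Remark~\ref{full}(3) rather than any direct manipulation of balls. I would first tackle preservation of equivalence, then continuity in the Harrison topology, then the preimage statement for full sets, and finally combine these to get continuity in the full topology.

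First, for preservation of equivalence, I would observe that two orderings $P_1,P_2\in\cX(L)$ are equivalent precisely when $\lambda_L(P_1)=\lambda_L(P_2)$. Applying $\res_M:M(L)\to M(K)$ to both sides and using commutativity of the diagram in Remark~\ref{full}(3), i.e., $\lambda_K\circ\res_{\cX}=\res_M\circ\lambda_L$, gives $\lambda_K(\res P_1)=\lambda_K(\res P_2)$, so $\res P_1$ and $\res P_2$ are equivalent in $\cX(K)$.

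Second, for continuity in the Harrison topology, it suffices to check preimages of subbasic sets. For any $a\in K\setminus\{0\}$ and $P\in\cX(L)$, we have $a\in\res(P)$ iff $a\in P$, so
\[
\res^{-1}(H_K(a))\;=\;\{P\in\cX(L)\mid a\in P\}\;=\;H_L(a),
\]
which is Harrison-open in $\cX(L)$. (Alternatively, this continuity is already recorded in Remark~\ref{full}(3).) Third, for the preimage statement, take $U\subseteq\cX(K)$ full and let $P\in\res^{-1}(U)$ and $Q\in\cX(L)$ be equivalent to $P$. By the first step, $\res P$ and $\res Q$ are equivalent, and since $\res P\in U$ and $U$ is full, we conclude $\res Q\in U$, hence $Q\in\res^{-1}(U)$; thus $\res^{-1}(U)$ is full.

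Finally, continuity in the full topology is immediate from the previous two steps: if $U$ is a basic open set in the full topology on $\cX(K)$, meaning $U$ is full and Harrison-open, then $\res^{-1}(U)$ is Harrison-open by the second step and full by the third step, hence open in the full topology on $\cX(L)$. There is no real obstacle in this argument; everything reduces to the commutative diagram together with the transparent formula $\res^{-1}(H_K(a))=H_L(a)$.
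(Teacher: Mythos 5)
Your proof is correct and follows essentially the same route as the paper: equivalence preservation from the commutative square in Remark~\ref{full}(3), Harrison-continuity from that same remark (your explicit computation $\res^{-1}(H_K(a))=H_L(a)$ is a nice addition), fullness of preimages from equivalence preservation, and full-topology continuity by combining the two. The paper's proof is terser but identical in structure.
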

\begin{proof}
The continuity in the Harrison topology has just been stated. The fact
that $\res$ preserves equivalence follows from the commutativity of the
above diagram. As in the proof of Lemma~\ref{contresC}, this implies the
last assertion, and it follows that $\res$ is also continuous with
respect to the full topology.
\end{proof}

\parm
If $R$ is any real closed field, each ordering $P$ on $R(y)$ is
uniquely determined by the cut $(D,E)$ in $R$ where $D=\{d\in R\mid
y-d\in P\}$ and $E=R\setminus D$ (cf.\ \cite{g}). Hence, we have a
bijection
\[
\chi:\> \cC(R)\;\longrightarrow\;\cX(R(y))\;,
\]
which we will also denote by $\chi_R\,$.

\begin{proposition}                         \label{ch}
With respect to the interval topology on $\cC(R)$ and the Harrison
topology on $\cX(R(y))$, $\chi$ is a homeomorphism. The same holds with
respect to the full topologies. For $C_1,C_2\in\cC(R)$, $C_1$ is
equivalent to $C_2$ if and only if $\chi(C_1)$ is equivalent to
$\chi(C_2)$.
\end{proposition}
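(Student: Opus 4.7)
I would split the argument into four steps:
(i) verify continuity of $\chi$ with respect to the interval and Harrison topologies;
(ii) upgrade the continuous bijection to a homeomorphism via a compactness argument;
(iii) read off the equivalence claim directly from Theorem~\ref{glue}; and
(iv) transfer everything to the full topologies.

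For (i), it suffices to show that $\chi^{-1}(H(a))$ is open in the interval topology for every subbasic Harrison set $H(a)$, $a \in R(y)\setminus\{0\}$. Multiplying through by the square of a denominator and using $H(a)=H(ab^2)$, I may assume $a = f(y) \in R[y]$. Since $R$ is real closed, $f$ factors as
\[
f \;=\; c \prod_{i=1}^k (y-r_i)^{m_i} \prod_{j} q_j(y),
\]
with $c, r_i \in R$, the $r_i$ pairwise distinct, and each $q_j$ an irreducible quadratic (hence positive in every ordering of $R(y)$). By the defining property of $\chi$, the sign of $y-r_i$ in $\chi(D,E)$ depends only on whether $r_i \in D$ or $r_i \in E$. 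Arranging the roots as $r_1 < \cdots < r_k$, the sign of $f$ in $\chi(C)$ is therefore constant on each of the sets $[C_{-\infty}, r_1^-]$, $[r_i^+, r_{i+1}^-]$, and $[r_k^+, C_\infty]$. By Lemma~\ref{pred} no cut lies strictly between $r_i^-$ and $r_i^+$, so each such set coincides with a basic open of the interval topology; for instance, $[r_i^+, r_{i+1}^-] = (r_i^-, r_{i+1}^+)$, while the two extreme intervals coincide with $[C_{-\infty}, r_1^+)$ and $(r_k^-, C_\infty]$. Hence $\chi^{-1}(H(f))$ is a finite union of open intervals, and thus open.

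For (ii), $\chi$ is already known to be a bijection. The set $\cC(R)$, as a linearly ordered set with endpoints in which every nonempty subset has both a supremum and an infimum (obtained as the union, respectively intersection, of the relevant initial segments), is compact in its interval topology; and $\cX(R(y))$ is compact Hausdorff by the introduction. A continuous bijection between compact Hausdorff spaces is automatically a homeomorphism, which settles the interval/Harrison part.

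For (iii) and (iv), Theorem~\ref{glue} gives, for distinct $C_1, C_2 \in \cC(R)$, that the orderings $\chi(C_1)$ and $\chi(C_2)$ are sent to the same $\R$-place precisely when $C_1, C_2$ are the lower and upper edges of a ball in $R$, which is exactly equivalence in $\cC(R)$. Hence $\chi$ preserves equivalence in both directions, so it induces a bijection between full subsets, and in combination with (ii) also between full open subsets; this is the asserted homeomorphism in the full topologies. The main subtlety of the proof lies in (i): one must exploit the real closedness of $R$ to reduce $H(f)$ to a boolean combination of sign-of-linear-factor conditions whose level sets are intervals of the convenient form $[r_i^+, r_{i+1}^-]$, which, thanks to Lemma~\ref{pred}, are precisely the interval-topology-open sets $(r_i^-, r_{i+1}^+)$. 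The remaining steps are then essentially formal.
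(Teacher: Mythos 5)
Your proof is correct, but it takes a genuinely different and more self-contained route than the paper for the first assertion. The paper disposes of the claim that $\chi$ is a homeomorphism (interval vs.\ Harrison topologies) by citing Proposition~2.1 of \cite{kmo}, and then proves the third assertion from Theorem~\ref{glue} and deduces the second; your steps (iii) and (iv) reproduce that part of the argument essentially verbatim. What you add is a direct proof of the first assertion: you reduce a subbasic Harrison set $H(a)$ to $H(f)$ with $f\in R[y]$ via $H(a)=H(ab^2)$, factor $f$ over the real closed field $R$ into a constant, linear factors $(y-r_i)^{m_i}$, and positive-definite quadratics, and observe that the sign of $f$ under $\chi(D,E)$ is determined by which of the $r_i$ lie in $D$. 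This makes $\chi^{-1}(H(f))$ a finite union of order-intervals that, by the adjacency of $r_i^-$ and $r_i^+$ (a consequence of Lemma~\ref{pred}), coincide with basic open intervals such as $(r_i^-,r_{i+1}^+)$; continuity follows. You then correctly upgrade the continuous bijection to a homeomorphism by noting that $\cC(R)$ with its interval topology is compact (being a complete linear order with endpoints, since arbitrary unions of initial segments are again cuts) and $\cX(R(y))$ is compact Hausdorff. This buys a self-contained exposition at the cost of a slightly longer argument; the paper's citation is shorter but opaque without consulting \cite{kmo}. One small presentational point: your invocation of Lemma~\ref{pred} to show no cut lies strictly between $r_i^-$ and $r_i^+$ works, but the more direct observation is simply that the left cut sets $R_{<r_i}$ and $R_{\le r_i}$ differ by the singleton $\{r_i\}$, so no initial segment lies properly between them.
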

\begin{proof}
The first assertion is a consequence of \cite[Prop.2.1]{kmo}. For the
proof of the second assertion, we first prove the third. By definition,
$\chi(C_1)$ is equivalent to $\chi(C_2)$ if and only if $\lambda(\chi
(C_1)) =\lambda(\chi(C_2))$. But by Theorem~\ref{glue}, this holds if
and only if $C_1$ and $C_2$ are equivalent. It follows that the image of
a full subset of $\cC(R)$ under $\chi$ is again full, and the preimage
of a full subset of $\cX(R(y))$ under $\chi$ is again full. Now the
second assertion follows from the first.
\end{proof}

This proposition, together with Theorem \ref{glue}, gives us a
description of $M(R(y))$ as the quotient space of $\cC(R)$ with respect to
the equivalence relation for cuts:

\begin{proposition}                         \label{lc}
Via the mapping $\lambda\circ\chi$, the space $M(R(y))$ with the topology
induced by the Harrison topology is the quotient space of $\cC(R)$ with
the full topology, where the quotient is taken modulo the equivalence of
cuts. The full topology is the coarsest topology on $\cC(R)$ for which
$\lambda\circ\chi$ is continuous. The image of a full open set in $\cC(R)$
under $\lambda\circ\chi$ is open.
\end{proposition}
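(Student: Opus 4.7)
The plan is to reduce the statement to Proposition~\ref{ch} together with the definition of the topology on $M(R(y))$. First I would assemble the two ingredients I need. By Proposition~\ref{ch}, $\chi$ is a bijection, a homeomorphism of the full topologies on $\cC(R)$ and $\cX(R(y))$, and satisfies: $C_1$ is equivalent to $C_2$ iff $\chi(C_1)$ is equivalent to $\chi(C_2)$; in particular the fibers of $\lambda\circ\chi$ are exactly the equivalence classes of cuts. By the definition of the topology on $M(R(y))$, $\lambda$ is a quotient map out of the Harrison topology on $\cX(R(y))$, and by Remark~\ref{full}(2), for every $U\subseteq M(R(y))$ the set $\lambda^{-1}(U)$ is full in $\cX(R(y))$.

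The core of the proof would then be the following chain of equivalences for $U\subseteq M(R(y))$: $U$ is open in $M(R(y))$ iff $\lambda^{-1}(U)$ is Harrison-open (the definition of the quotient topology); iff $\lambda^{-1}(U)$ is open in the full topology on $\cX(R(y))$ (since $\lambda^{-1}(U)$ is automatically full, and the full topology by definition consists of all full Harrison-open sets); iff $(\lambda\circ\chi)^{-1}(U)=\chi^{-1}(\lambda^{-1}(U))$ is open in the full topology on $\cC(R)$ (by the full-topology homeomorphism provided by $\chi$). Combined with the description of the fibers, this says exactly that $M(R(y))$ carries the quotient topology of $\cC(R)$ with the full topology modulo equivalence of cuts, which is the main assertion.

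The remaining two claims would then follow quickly. If $V\subseteq\cC(R)$ is open in the full topology, then $\chi(V)$ is full and Harrison-open in $\cX(R(y))$, so Remark~\ref{full}(1) gives that $\lambda(\chi(V))$ is open in $M(R(y))$, proving the last assertion. For the coarsest-topology statement, I would take any topology $\tau$ on $\cC(R)$ making $\lambda\circ\chi$ continuous and let $V$ be full-open. Fullness of $V$ gives $V=(\lambda\circ\chi)^{-1}((\lambda\circ\chi)(V))$, while the openness claim just established ensures $(\lambda\circ\chi)(V)$ is open in $M(R(y))$, so $V\in\tau$ by continuity of $\lambda\circ\chi$; combined with continuity of $\lambda\circ\chi$ in the full topology (the forward direction of the central chain of equivalences), this shows that the full topology is indeed the coarsest.

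I do not anticipate any real obstacle: all the genuine topological content is packaged in Proposition~\ref{ch} (the full-topology homeomorphism and preservation of equivalence) and in Remark~\ref{full} (automatic fullness of $\lambda$-preimages and openness of $\lambda$-images of full open sets). The proof is essentially a bookkeeping exercise built on the observation that ``full topology'' is defined as ``full sets in the Harrison topology'', so fullness of $\lambda^{-1}(U)$ is precisely what bridges the Harrison and full topologies in the chain of equivalences above.
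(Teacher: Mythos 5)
Your proof is correct and follows exactly the route the paper intends: the paper states the proposition without displayed proof, remarking only that it follows from Proposition~\ref{ch} together with Theorem~\ref{glue}, and your argument is precisely the bookkeeping that remark leaves implicit (Theorem~\ref{glue} enters through Proposition~\ref{ch}'s equivalence-preservation, and Remark~\ref{full}(1)--(2) supply the two facts about $\lambda$ on full sets). The chain of equivalences reducing openness in $M(R(y))$ to full-openness in $\cC(R)$, plus the observation that full-open sets are saturated under $\lambda\circ\chi$ so that the full topology coincides with the initial topology, is a faithful and complete write-up.
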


A place in $M(R(y))$ is called \bfind{principal} if it is the image
under $\lambda\circ\chi$ of a principal cut in $\cC(R)$. From
Proposition~\ref{lc} and Lemma~\ref{pcdens} we obtain:
\begin{lemma}                               \label{ppdens}
The principal cuts lie dense in $M(R(y))$.
\end{lemma}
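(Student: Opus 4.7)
The plan is to transfer the density statement of Lemma~\ref{pcdens} from $\cC(R)$ to the quotient $M(R(y))$ via the continuous surjection $\lambda\circ\chi$ supplied by Proposition~\ref{lc}. First, I would take an arbitrary non-empty open set $U\subseteq M(R(y))$ and consider its preimage
\[
V\>:=\>(\lambda\circ\chi)^{-1}(U)\;\subseteq\;\cC(R).
\]
Since $\lambda$ is surjective by the Baer-Krull Theorem and $\chi$ is a bijection, $\lambda\circ\chi$ is surjective, so $V$ is non-empty; by the continuity of $\lambda\circ\chi$ asserted in Proposition~\ref{lc}, $V$ is open in the full topology on $\cC(R)$.

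Next, I would use that the full topology on $\cC(R)$ is by definition a subfamily of the interval topology (it consists of those interval-open sets which are also full). Hence $V$ is also open in the interval topology on $\cC(R)$, so Lemma~\ref{pcdens} applies and yields a principal cut $C\in V$. Then $(\lambda\circ\chi)(C)$ is, by definition, a principal place in $M(R(y))$, and it lies in $U$ since $C\in V$. This establishes that every non-empty open subset of $M(R(y))$ contains a principal place, which is the desired density.

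I do not anticipate any real obstacle; the argument is essentially a one-line transport through the continuous map $\lambda\circ\chi$. The only subtlety worth flagging is to make sure the right topology is invoked on $\cC(R)$: Proposition~\ref{lc} gives continuity with respect to the \emph{full} topology, but since every full-open set is by definition open in the \emph{interval} topology, the density statement of Lemma~\ref{pcdens}, which is phrased in the interval topology, is exactly what is needed.
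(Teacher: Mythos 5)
Your proof is correct and is essentially the paper's own argument: the paper derives Lemma~\ref{ppdens} directly from Proposition~\ref{lc} together with Lemma~\ref{pcdens}, exactly the transport you describe. The one point you rightly flag — that full-open sets are by definition interval-open, so density of principal cuts in the interval topology suffices — is the only detail the paper leaves implicit, and you handle it correctly.
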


\pars
We will also need:
\begin{proposition}                         \label{diag}
The restriction mappings in the following diagram are continuous
(w.r.t.\ the interval and the Harrison topology as well as w.r.t.\
the full topologies), and the diagram commutes:
\begin{displaymath}
{
\begin{array}{ccccc}
 \mathcal \cC(F)& \stackrel {\chi_F}{\longrightarrow }
& \mathcal X(F(y))& \stackrel {\lambda_{F(y)}}{\longrightarrow }
& M(F(y))\\\\
{\res}{ \Big {\downarrow}}\mbox{ \; } & &{\res}{\Big\downarrow}\mbox{ \; }
 & &{\res}{\Big  \downarrow}\mbox{ \; } \\\\
 \mathcal \cC(R)& \stackrel {\chi_R}{\longrightarrow }
& \mathcal X(R(y))&\stackrel {\lambda_{R(y)}}{\longrightarrow} & M(R(y))
\end {array}
}
\end{displaymath}
\end{proposition}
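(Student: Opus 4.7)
The plan is to split the verification into commutativity of each of the two inner squares and continuity of each of the three vertical restriction maps; commutativity of the outer rectangle then follows by pasting. Since all of the horizontal maps and all of the restriction maps have been studied previously, I expect this proposition to be essentially a matter of assembling pieces that are already in place.

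For the left square, the key point is to unfold the defining property of $\chi$. Given a cut $C=(D,E)\in\cC(F)$, $\chi_F(C)$ is by construction the ordering $P$ of $F(y)$ characterized by $D=\{d\in F\mid y-d\in P\}$. Restricting $P$ to $R(y)$ yields an ordering whose associated cut in $R$ is $\{d\in R\mid y-d\in P\}=D\cap R$, which is precisely the left segment of $\res(C)$. Hence $\res\circ\chi_F=\chi_R\circ\res$. For the right square, the identity $\res\circ\lambda_{F(y)}=\lambda_{R(y)}\circ\res$ is exactly the commutativity recorded in Remark~\ref{full}(3), applied with $L=F(y)$ and $K=R(y)$: for any ordering $P$ of $F(y)$, the $\R$-place $\lambda_{F(y)}(P)$ restricted to $R(y)$ is induced by $P\cap R(y)$, which is $\lambda_{R(y)}(\res P)$.

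For continuity, all three vertical maps have already been treated in the preceding material: $\res:\cC(F)\rightarrow\cC(R)$ is continuous in both the interval topology and the full topology by Lemma~\ref{contresC}; $\res:\cX(F(y))\rightarrow\cX(R(y))$ is continuous in both the Harrison topology and the full topology by Lemma~\ref{contresX}; and $\res:M(F(y))\rightarrow M(R(y))$ is continuous by Remark~\ref{full}(3). There is no genuine obstacle here --- the only substantive verification is the short definitional check for the left square, and the remainder is an assembly of facts established earlier in the paper.
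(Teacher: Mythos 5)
Your proof is correct and follows essentially the same route as the paper's: the paper likewise disposes of continuity of all three vertical maps and commutativity of the right square by citing Lemmas~\ref{contresC}, \ref{contresX} and Remark~\ref{full}(3), and then checks commutativity of the left square by the same observation you make, that the cut in $R$ induced by $y$ under $\res(\chi_F(C))$ is the restriction to $R$ of the cut induced by $y$ in $F$ under $\chi_F(C)$, i.e.\ $D\cap R$.
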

\begin{proof}
In view of Lemmas~\ref{contresC} and~\ref{contresX} and part 3) of
Remark~\ref{full}, it just remains to prove that the square on the left
hand side of the diagram commutes. This follows from the fact that the
cut induced by $y$ in $R$ under the restriction of some ordering from
$F(y)$ is simply the restriction of the cut induced by $y$ in $F$ under
this ordering.
\end{proof}

We note the following fact, which is straightforward to prove:
\begin{lemma}                               \label{ipri}
If $\iota$ is an embedding of $\cC(R)$ in $\cC(F)$, or of $\cX(K)$ in
$\cX(L)$, or of $M(K)$ in $M(L)$, compatible with restriction, then the
preimage of a set $U$ under $\iota$ is equal to its image under
restriction.
\end{lemma}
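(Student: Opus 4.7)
The plan is to prove the set equality $\iota^{-1}(U)=\res(U)$ purely formally, using only that $\iota$ admits $\res$ as a left inverse, i.e.\ that $\res\circ\iota$ is the identity on the smaller space. Since the hypothesis and conclusion in all three cases ($\cC(R)\hookrightarrow\cC(F)$, $\cX(K)\hookrightarrow\cX(L)$, $M(K)\hookrightarrow M(L)$) are of exactly the same shape, the argument is carried out once, at the level of abstract maps, and then specialized. The statement is understood for $U$ contained in the image of $\iota$ (which is the only case that arises when $\iota$ is used to identify the smaller space with a subspace of the larger one).

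I would prove the two inclusions separately. For $\iota^{-1}(U)\subseteq\res(U)$: if $x$ satisfies $\iota(x)\in U$, then applying $\res$ and using $\res\circ\iota=\mathrm{id}$ gives $x=\res(\iota(x))\in\res(U)$. For the reverse inclusion $\res(U)\subseteq\iota^{-1}(U)$: given $x=\res(y)$ with $y\in U$, write $y=\iota(x')$, possible because $U$ lies in the image of $\iota$; then $x=\res(\iota(x'))=x'$, so $\iota(x)=\iota(x')=y\in U$, i.e.\ $x\in\iota^{-1}(U)$.

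There is no real obstacle here; the only input is the compatibility relation $\res\circ\iota=\mathrm{id}$ that is built into the definition of "compatible with restriction". This is precisely why the authors call the proof straightforward: once the abstract set-theoretic fact is recorded, it applies verbatim to all three concrete embeddings.
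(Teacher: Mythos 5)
Your proof is correct as far as it goes, and the point you flag -- that the reverse inclusion $\res(U)\subseteq\iota^{-1}(U)$ needs the extra hypothesis $U\subseteq\im(\iota)$ -- is not a cosmetic remark but a genuine necessity. Without that hypothesis the lemma as literally stated is false. For a concrete counterexample inside the paper's own setting, take $R=\R$ and $F$ a non-archimedean real closed extension, and let $\tilde\iota:\cC(R)\to\cC(F)$ be one of the embeddings of Proposition~\ref{cont}. Then $\tilde\iota(0\pR)=\mathcal{M}\pF$, the upper edge of the ball $\mathcal{M}$ of infinitesimals of $F$, while $0\pF$ is a strictly smaller cut in $\cC(F)$ that also restricts to $0\pR$. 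Hence for $U=\{0\pF\}$ one has $\res(U)=\{0\pR\}$ but $\tilde\iota^{-1}(U)=\emptyset$. The inclusion $\iota^{-1}(U)\subseteq\res(U)$, by contrast, holds for arbitrary $U$, exactly as your first paragraph shows; what is true in general is $\iota^{-1}(U)=\iota^{-1}(U\cap\im\iota)=\res(U\cap\im\iota)$.

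Be careful, though, with the parenthetical justification that $U\subseteq\im(\iota)$ is ``the only case that arises.'' That is not how the paper actually invokes the lemma: in the proof of Theorem~\ref{thembM} it is applied once to an arbitrary open set $U$ of $M(F(y))$ and once to a full open $U_1\subseteq\cC(F)$, and in general neither is contained in the image of the relevant embedding. The conclusions drawn there are unaffected -- what is ultimately needed is only the openness of $\iota^{-1}(U)=\iota^{-1}(U\cap\im\iota)$, which one can also read off directly from the commuting square defining $\iota$ -- but the lemma in its unrestricted form is being cited outside the range where it holds. So the hypothesis you added is the right one; the caveat is that it is a substantive restriction, not one that comes for free from the way the lemma is used.
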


%
%
%
%
\section{Embeddings of $\cC(R)$ in $\cC(F)$}    
We consider an extension $F|R$ of ordered fields. Our goal is to
construct an embedding $\iota$ of $M(R(y))$ in $M(F(y))$ under suitable
assumptions on the extension; this will be done in Section~\ref{embM}.
%
%
In view of Proposition~\ref{lc}, we first define an order preserving
embedding of $\cC(R)$ in $\cC(F)$.

From now on we will frequently have
to compare cuts in $R$ with cuts in $F$. If $C=(D,E)$ is a cut in $R$,
then we will say that the element $a\in F$ {\bf fills} $C$ if $D<a<E$
holds in $F$. Since a subset $A$ of $R$ is also a subset of $F$, we will
have to distinguish whether $A^+$ and $A^-$ are taken in $R$ or in $F$.
If this is not clear from the context, we will write $A\pR$ and
$A\mR$ for the former, and $A\pF$ and $A\mF$ for the latter.

To find a suitable embedding of $\cC(R)$ in $\cC(F)$, we need to study
the set of all elements in $F$ that fill a cut in $R$. More generally,
we have to consider the following situation.

\begin{lemma}                               \label{cutfill}
Take two non-empty sets $D<E$ in $R$. Assume that $(D,E)$ is either a
non-ball cut in $R$ with $\mbox{\rm Betw}_F (D,E)\ne \emptyset$, or a
ball complement in $R$. Then
\[
\mbox{\rm Betw}_F (D,E)\>=\>B_{S}(a,F)
\]
for each $a\in \mbox{\rm Betw}_F (D,E)$, where $S$ is the largest final
segment of $vF$ disjoint from $v(E-D)$ (or equivalently, the largest
subset of $vF$ such that $S>v(E-D)$).
\end{lemma}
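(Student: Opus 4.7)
The plan is to verify the two inclusions of $\mbox{\rm Betw}_F(D,E) = B_S(a,F)$ separately. First I would unpack $S$: by its definition, $v(a-b) \in S \cup \{\infty\}$ is equivalent to $v(a-b) > v(e-d)$ for every $d \in D$ and $e \in E$ (with $b=a$ corresponding to $v(a-b) = \infty$). Since balls are cosets of convex subgroups of $(F,+)$, the equality, once established for one choice of $a \in \mbox{\rm Betw}_F(D,E)$, automatically propagates to every other element of this set; in the ball complement case I would therefore fix $a = a_0 \in B_0$.

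For the inclusion $B_S(a,F) \subseteq \mbox{\rm Betw}_F(D,E)$, take $b \in B_S(a,F)$ with $b \neq a$ and any $d \in D$. Writing $b-d = (b-a) + (a-d)$, the ultrametric triangle law yields $v(b-d) = v(a-d)$ with the sign of $a-d > 0$ as soon as $v(a-b) > v(a-d)$, whence $b > d$; symmetrically $b < e$ for $e \in E$. So the key subclaim is that $v(a-d), v(e-a) \notin S$ for every $d \in D, e \in E$. In the ball complement case this is immediate from Lemma~\ref{bcS}: $d \notin B_0$ forces $v(a_0-d) \in vR \setminus S_0 = v(E-D)$. In the non-ball cut case I would argue by contradiction: suppose $v(a-d_1) > v(E-D)$ for some $d_1 \in D$, and form the $R$-ball $B^\circ := \{r \in R : v(r-d_1) > v(E-D)\}$. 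I would then verify (i) $B^\circ \subseteq D$, since $r \in B^\circ \cap E$ would force $v(r-d_1) \in v(E-D)$, a contradiction; (ii) every $e \in E$ satisfies $e > B^\circ$, because for $r \in B^\circ$ the ultrametric gives $v(e-r) = v(e-d_1)$ with the sign of $e-d_1 > 0$; and (iii) every $r^* \in R$ with $r^* > B^\circ$ lies in $E$, obtained by passing to the $F$-counterpart $\hat B^\circ := \{f \in F : v(f-d_1) > v(E-D)\} \ni a$ and using $v(f-d_1) > v(r^* - d_1)$ to deduce $|f-d_1| < r^*-d_1$, hence $f < r^*$ and in particular $a < r^*$. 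Together (i)--(iii) identify $(D,E) = (B^\circ)^+$ as a ball cut in $R$, contradicting the non-ball hypothesis.

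For the reverse inclusion $\mbox{\rm Betw}_F(D,E) \subseteq B_S(a,F)$, fix $b \in \mbox{\rm Betw}_F(D,E)$ with $b \neq a$ and $\gamma_0 \in v(E-D)$; the target is $v(a-b) > \gamma_0$. In the ball complement case, take $r_0 \in R$ with $r_0 > 0$ and $v(r_0) = \gamma_0$; then $v(r_0/n) = \gamma_0 \notin S_0$ for every integer $n \geq 1$, so $a_0 \pm r_0/n \notin B_0$, and convexity of $B_0$ forces $a_0 + r_0/n > B_0$ (hence in $E$) and symmetrically $a_0 - r_0/n \in D$. Sandwiching $b$ between these yields $|b - a_0| < r_0/n$ for every $n$, so $v(b - a_0) > v(r_0) = \gamma_0$ by the definition of the canonical valuation. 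In the non-ball cut case, pick $d \in D$, $e \in E$ with $v(e-d) = \gamma_0$ and subdivide: let $d_k := d + k(e-d)/n$ for $k = 0, \ldots, n$, and let $k^* \in \{1, \ldots, n\}$ be the smallest index with $d_{k^*} \in E$; since $R = D \cup E$ here, $d_{k^*-1} \in D$ and $d_{k^*} \in E$ bracket both $a$ and $b$ with separation $(e-d)/n$, so $|a-b| < (e-d)/n$ for every $n$, giving $v(a-b) > v(e-d) = \gamma_0$.

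The main obstacle will be the subclaim in the first inclusion for the non-ball cut case, where the non-ball hypothesis enters essentially. Step (iii)---passing from the $R$-ball $B^\circ$ to its $F$-counterpart $\hat B^\circ$ and combining $a \in \hat B^\circ$ with an ultrametric-and-sign argument to force $r^* > a$---is the single non-routine ingredient; the rest reduces to ultrametric triangle inequalities and the elementary scaling identity $v(r_0/n) = v(r_0)$ for positive integers $n$.
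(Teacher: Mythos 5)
Your proof is correct, and for the harder inclusion it takes a genuinely different route from the paper. The paper establishes $\mbox{\rm Betw}_F(D,E)\subseteq B_S(a,F)$ by first noting the non-strict inequality $v(a-b)\geq v(e-d)$ and then ruling out equality through a residue computation: passing to the (real) residues of $\frac{a-d}{e-d}$ and $\frac{b-d}{e-d}$, picking rationals $q_1<q_2$ between them, and deducing that $d+q_1(e-d)$, $d+q_2(e-d)$ would fill the cut in $R$, which leads to a contradiction in both cases. You instead prove the strict inequality directly by an approximation argument: for each $n$ you produce $d_n\in D$, $e_n\in E$ with $e_n-d_n=(e-d)/n$ (in the non-ball case by subdividing $[d,e]$ into $n$ pieces and locating the index where the subdivision crosses the cut, in the ball-complement case by taking $a_0\pm r_0/n$), and since $a,b$ are sandwiched between $d_n$ and $e_n$ you get $|a-b|<(e-d)/n$ for all $n$, hence $v(a-b)>v(e-d)$. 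Both arguments ultimately invoke the archimedean property (yours via $v(1/n)=0$, the paper's via density of $\mathbb Q$ in $\R$), but yours avoids the residue map entirely and is more self-contained. For the reverse inclusion the two proofs are structurally parallel: both derive a contradiction by showing that a suitable ball in $R$ would make $(D,E)$ a ball cut; your three-step verification that $(D,E)=(B^\circ)^+$ is a spelled-out version of the paper's observation that $B_T(d,R)$ is a final segment of $D$. One minor expository remark: your ``key subclaim'' $v(a-d)\notin S$ is precisely the statement $D\cap B_S(a,F)=\emptyset$, so its proof by contradiction is really the same shape as the paper's, just entered from a slightly different angle.
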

\begin{proof}
First, we show that ${\cal B}:= \mbox{\rm Betw}_F (D,E)$ is contained in
$B_S(a,F)$. Take any $d\in D$, $e\in E$ and $b\in\cal B$. As $d<a<e$ and
$|a-b|<e-d$, we have that $v(a-b)\geq v(e-d)$. We show that we must have
$v(a-b)>v(e-d)$, which yields that $b\in B_S(a,F)$.

Suppose that $v(a-b)=v(e-d)$. We assume that $b<a$; the case of $b>a$ is
symmetrical. Then it follows that $v(a-d)=v(e-d)$ and $v(b-d)\geq
v(e-d)$, so that $v\left( \frac{a-b}{e-d} \right)=0$,
$v\left(\frac{a-d}{e-d}\right)=0$ and $v\left(\frac{b-d}{e-d}
\right)\geq 0$. We consider the residues under $v$, which are real
numbers. Firstly, $v\left(\frac{a-b}{e-d}\right)=0$ and
$\frac{a-b}{e-d}>0$ imply that $\left(\frac{a-b}{e-d}\right)v>0$, and
$v\left(\frac{b-d}{e-d}\right)\geq 0$ and $\frac{b-d}{e-d}>0$ imply
that $\left(\frac{b-d}{e-d}\right)v\geq 0$. Secondly, we have that
\[
0\>\leq\>\left(\frac{b-d}{e-d}\right)v
\><\>\left(\frac{a-d}{e-d}\right)v\>,
\]
where the last inequality holds because $\left(\frac{a-d}{e-d}\right)v
-\left(\frac{b-d}{e-d}\right)v=\left(\frac{a-d}{e-d}-\frac{b-d}{e-d}
\right)v=\left(\frac{a-b}{e-d}\right)v>0$. So there are rational numbers
$q_1,q_2>0$ such that
\[
\left(\frac{b-d}{e-d}\right)v\><\>q_1\><\>q_2\><\>
\left(\frac{a-d}{e-d}\right)v\,,
\]
which yields
\[
b-d \><\>q_1(e-d)\><\>q_2(e-d)\><\>a-d\;,
\]
whence
\[
b\><\>d+q_1(e-d)\><\>d+q_2(e-d)\><\>a\;.
\]
Consequently, $d+q_1(e-d)\,,\,d+q_2(e-d)\in \mbox{\rm Betw}_R (D,E)$,
which can only happen in the ball complement case. In this case,
$\mbox{\rm Betw}_R (D,E)$ is a ball $B_{S_0}(a_0,R)$ in $R$, with
$D<a_0<E$. By Lemma~\ref{bcS}, $S_0=vR\setminus v(E-D)$. But
\[
v(d+q_2(e-d)\,-\,(d+q_1(e-d)))\>=\>v((q_2-q_1)(e-d))\>=\>v(e-d)<S_0\;,
\]
in contradiction to $d+q_1(e-d)\,,\,d+q_2(e-d)\in B_{S_0}(a_0,R)$.
We have now proved that $\cal B$ is contained in $B_{S}(a,F)$.

\parm
It remains to show that $B_{S}(a,F)$ is contained in $\cal B$. If this
were not the case, then for some $b\in B_{S}(a,F)$ there would exist
some $d\in D$ with $b\leq d$, or some $e\in E$ with $b\geq e$. We will
assume the first case and deduce a contradiction; the second case is
symmetrical. Since $b\leq d<a$ and $B_{S}(a,F)$ is convex, we have that
$d\in B_{S}(a,F)$.

First, we consider the case of $(D,E)$ being the complement of a ball
$B_{S_0}(a_0,R)$ in $R$. We have that $a_0\in {\cal B}\subseteq
B_{S}(a,F)$, so $B_{S}(a,F)=B_{S}(a_0,F)$. Further, we know from
Lemma~\ref{bcS} that $v(E-D)=vR\setminus S_0\,$. By our choice of $S$,
this implies that $S\cap vR=S_0$, and we obtain that $d\in B_{S}(a_0,F)
\cap R= B_{S_0}(a_0,R)$, a contradiction.

In the non-ball case, we use that $B_{S}(a,F)=B_{S}(d,F)$ to obtain that
$B_{S}(a,F)\cap R=B_{T}(d,R)$, where $T:=S\cap vR$. From $S>v(E-D)$ it
follows that $B_{T}(d,R)<E$. In the present case, $\mbox{\rm Betw}_R
(D,E) = \emptyset$, so we find that $B_{T}(d,R)$ is contained in $D$.
Since $B_{S}(a,F)$ is convex and contains $a>D$, it follows that
$B_{T}(d,R)$ is a final segment of $D$. But this contradicts our
assumption that $(D,E)$ is a non-ball cut.
\end{proof}

\begin{remark}                              \label{remS}
In the case where $(D,E)$ is the complement of a ball $B_{S_0}(a_0,R)$
in $R$, we can choose $a=a_0$. Moreover, $S$ is then equal to the
largest final segment of $vF$ disjoint from $vR\setminus S_0$ (or
equivalently, the largest subset of $vF$ such that $S>vR\setminus S_0$).
\end{remark}

\pars
The next lemma tells us which cuts in $F$ restrict to the same cut in
$R$:

\begin{lemma}                               \label{rescut}
Take any cut $C$ in $R$.
\sn
a) \ If $C=(D,E)$, then the set of all cuts in $F$ that restrict to $C$
is $\{C'\in \cC(F)\mid D\pF\leq C'\leq E\mF\}$. (If $D=\emptyset$,
then $D\pF$ means the cut $F\mF$, and if $E=\emptyset$, then
$E\mF$ means the cut $F\pF$.)
\sn
b) \ Assume that $C=B_0\pR$ or $C=B_0\mR$ for a ball $B_0=B_{S_0}
(a_0,R) \ne R$ in $R$, and take the ball $B_{S}(a_0,F)$ as in
Lemma~\ref{cutfill}. Then the set of all cuts in $F$ that restrict to
the cut $C$ in $R$ is $\{C' \in \cC(F)\mid B_0\pF\leq C'\leq B_{S}
(a_0,F)\pF\}$ for $C=B_0\pR$, and $\{C'\in \cC(F)\mid B_{S}
(a_0,F)\mF \leq C'\leq B_0\mF\}$ for $C=B_0\mR$. If $vR$ is a
convex subgroup of $vF$ and $C$ is not principal, then $B_0\pF=B_{S}
(a_0,F)\pF$, $B_0\mF=B_{S}(a_0,F)\mF$, and the above sets are
singletons.
\end{lemma}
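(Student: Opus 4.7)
First I would prove (a) by unpacking the definition of restriction. A cut $C' = (D', E')$ of $F$ restricts to $(D, E)$ in $R$ iff $D \subseteq D'$ and $E \subseteq E'$. Since the initial segment of $D\pF$ is by definition the smallest initial segment of $F$ containing $D$, the first condition is equivalent to $D\pF \leq C'$; symmetrically, $E \subseteq E'$ is equivalent to $C' \leq E\mF$. The boundary conventions for $D = \emptyset$ (or $E = \emptyset$) correspond to no constraint from below (resp.\ above), matching the stated reading of $F\mF$ and $F\pF$.

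For (b), I treat $C = B_0\pR$; the case $C = B_0\mR$ is symmetric. Here $D = \{r \in R : r \leq b$ for some $b \in B_0\}$ and $E = \{r \in R : r > B_0\}$. First, $D\pF = B_0\pF$: both cuts have the same initial segment in $F$, since any initial segment of $F$ containing $B_0$ is downward closed and hence contains $D$, and conversely. To show $E\mF = B_S(a_0, F)\pF$, apply Lemma~\ref{cutfill} together with Remark~\ref{remS} to the ball complement $(D \setminus B_0, E)$ of $B_0$ in $R$; this yields $S \cap vR = S_0$ and hence $B_S(a_0, F) \cap R = B_0$. Then any $r \in E$ satisfies $r > a_0 \in B_S(a_0, F)$ and $r \notin B_S(a_0, F)$, so convexity of the ball forces $r > B_S(a_0, F)$; this proves $B_S(a_0, F)\pF \leq E\mF$. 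Conversely, any $f > a_0$ in the initial segment of $E\mF$ satisfies $D \setminus B_0 < f < E$ in $F$, hence lies in $\mbox{\rm Betw}_F(D \setminus B_0, E) = B_S(a_0, F)$ by Lemma~\ref{cutfill}; this places $f$ in the initial segment of $B_S(a_0, F)\pF$, giving $E\mF \leq B_S(a_0, F)\pF$.

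For the final singleton assertion, assume $vR$ is a convex subgroup of $vF$ and $C = B_0\pR$ is non-principal; I need $B_0\pF = B_S(a_0, F)\pF$, which amounts to showing every $f \in B_S(a_0, F)$ with $f > a_0$ lies below some element of $B_0$. Non-principality forces $B_0 \neq \{a_0\}$, so by the symmetry of a ball about its center there exists $b \in B_0$ with $b > a_0$. Set $\alpha := v(f - a_0) \in S$. If $\alpha \in vR$, then $\alpha \in S \cap vR = S_0$; choosing $c \in R$, $c > 0$ with $v(c) = \alpha$, the element $(f - a_0)/c$ has valuation zero, so its image under the canonical $\R$-place is a positive real, giving $(f - a_0)/c < N$ for some $N \in \N$ and thus $f < a_0 + Nc \in B_0$. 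If $\alpha \notin vR$, convexity of $vR$ in $vF$ forces $\alpha > vR$ (the case $\alpha < vR$ is ruled out by $\alpha \in S > vR \setminus S_0 \neq \emptyset$, using $B_0 \neq R$), so $\alpha > v(b - a_0)$ and thus $f - a_0 < b - a_0$, giving $f < b$. The main subtlety is this last dichotomy: passing to residues under the canonical $\R$-place in one subcase and exploiting the infinitesimal nature of $f - a_0$ in the other, with non-principality needed precisely to supply the element $b$ above $a_0$.
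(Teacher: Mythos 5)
Your proof is correct and follows essentially the same strategy as the paper's: part (a) is unpacked directly from the definitions (the paper simply calls it ``straightforward''), and part (b) relies on Lemma~\ref{cutfill} and Remark~\ref{remS} to identify the relevant ball $B_S(a_0,F)$ in $F$, then on convexity of $vR$ in $vF$ for the singleton conclusion.

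The one place where you take a noticeably different route is the final singleton assertion. The paper proves the value-group statement that $S_0$ is an initial segment of $S$, and then asserts without further detail that this makes $B_{S_0}(a_0,R)$ coinitial and cofinal in $B_S(a_0,F)$. You instead prove cofinality of $B_0$ in $B_S(a_0,F)$ directly at the element level, via the dichotomy $\alpha := v(f-a_0) \in vR$ versus $\alpha \notin vR$. In the first case you use an archimedean-residue argument ($(f-a_0)/c$ has a positive real residue, so is bounded by some $N\in\N$); this is precisely the step the paper leaves implicit when it passes from ``$S_0$ initial in $S$'' to ``coinitial and cofinal.'' In the second case you invoke convexity of $vR$ to force $\alpha > vR$, which is the element-level counterpart of the paper's proof that $S_0$ is an initial segment of $S$. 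So the two arguments rest on the same ideas; yours is more explicit about the residue step and more elementary in flavor, at the cost of a longer case analysis. One small presentational point: the singleton claim in the lemma lists both $B_0\pF = B_S(a_0,F)\pF$ and $B_0\mF = B_S(a_0,F)\mF$; you prove only the cofinality (the $\pF$ equality) and note the $\mR$ case is symmetric, which is fine, but it would be cleaner to state explicitly that coinitiality follows by the mirror-image argument.
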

\begin{proof}
The proof of part a) is straightforward. Now assume the hypotheses of
part b). We prove the assertions for $C=B_0\pR$. For $C=B_0\mR$, the
proof is symmetrical. If $(D,E)$ is the ball complement of $B_0$ in $R$,
then $C= (D\cup B_0,E)$. By Lemma~\ref{cutfill}, $\mbox{\rm Betw}_F
(D,E)= B_{S}(a,F)$, which implies that $\mbox{\rm Betw}_F (D\cup
B_0,E)=\{b\in B_{S}(a,F)\mid b>B_0\}$. This implies the first assertion
of part b).

For the proof of the second assertion, assume that $vR$ is a convex
subgroup of $vF$ and that $C$ is not principal. Then $S_0$ is a
non-empty final segment of $vR$, and $S_0\ne vR$ since $B_{S_0}
(a_0,R)\ne R$ by assumption. We wish to show that $S_0$ is an initial
segment of $S$. Since $S_0$ is a final segment of $vR$ and $vR$ is
convex in $vF$, also $S_0$ is convex in $vF$. Hence if $S_0$ were not an
initial segment of $S$, then there would be an element $\gamma\in S$
such that $\gamma<S_0$. On the other hand, $S>vR\setminus S_0$, whence
$S_0>\gamma>vR\setminus S_0\ne\emptyset$. But this contradicts the
convexity of $vR$ in $vF$.

Since $S_0$ is an initial segment of $S$, the ball $B_{S_0}(a_0,R)$ is
coinitial and cofinal in the ball $B_{S}(a_0,F)$. This yields that
$B_0\pF=B_{S}(a_0,F)\pF$ and $B_0\mF=B_{S}(a_0,F)\mF$.
\end{proof}

We define an order preserving embedding $\tilde\iota$ of $\cC(R)$ in
$\cC(F)$ as follows. Take a cut $C$ in $R$. If $C=(D,E)$ is a non-ball
cut in $R$, then we set $\tilde\iota(C)=D\pF$ or $\tilde\iota(C)=E\mF$.
If $C$ is the lower or upper edge of a ball $B_0\ne R$ in $R$ and
$(D,E)$ is the ball complement of $B_0\,$, then we set $\tilde\iota(C)=
D\pF$ if $C=B_0\mR$ is the lower edge, and $\tilde \iota(C)= E\mF$ if
$C=B_0\pR$ is the upper edge. Finally, we set $\tilde \iota(R\mR)= R\mF$
and $\tilde\iota(R\pR)=R\pF$. Note that $\tilde\iota$ is uniquely
determined by this definition if and only if no non-ball cut $(D,E)$ in
$R$ is filled in $F$ because then $D^+=E^-$ will still hold in $F$.

\begin{remark}                              \label{iCnbc}
For a cut $C$ in $R$, its image $\tilde\iota(C)$ is a non-ball cut in
$F$ if and only if $C$ is a non-ball cut in $R$ that is not filled in
$F$. Hence if $\tilde\iota(C)$ is a non-ball cut in $F$ then it is the
only cut in $F$ that restricts to $C$.

Indeed, if $C$ is a ball cut in $R$, then by our definition of $\tilde
\iota$, also $\tilde\iota(C)$ is a ball cut. If $C=(D,E)$ is a non-ball
cut in $R$ that is filled in $F$, then by Lemma~\ref{cutfill}, $D\pF=
B\mF$ and $E\mF=B\pF$ for a ball $B=B_S(a,F)$ in $F$, so $\tilde \iota
(C)$ is again a ball cut. But if the non-ball cut $C=(D,E)$ is not
filled in $F$, then it is also a non-ball cut in $F$, as the restriction
to $R$ of a ball cofinal in the left or coinitial in the right cut set
in $F$ would be a ball in $R$ cofinal in $D$ or coinitial in $E$.
\end{remark}

The embedding $\tilde\iota$ is order preserving since
%
%
if $(D_1,E_1)<(D_2,E_2)$ are two cuts in $R$, then $E_1\cap
D_2\ne\emptyset$ and therefore, $D_1\pF\leq E_1\mF <D_2\pF\leq E_2\mF$.

If $B_{S_0}(a_0,R)\ne R$ is a ball in $R$, and if we take $S$ as defined
in Lemma~\ref{cutfill}, then by our definition,
\[
\tilde\iota(B_{S_0} (a_0,R)\mR)=B_S(a_0,R)\mF\;\mbox{ \ \ and \ \ }\;
\tilde\iota(B_{S_0}(a_0,R)\pR)= B_S(a_0,R)\pF\>.
\]
This together with $\tilde \iota(R\mR)=R\mF$ and
$\tilde\iota(R\pR)=R\pF$ shows:

\begin{lemma}                               \label{etoe}
The embedding $\tilde\iota$ sends equivalent cuts to equivalent cuts.
Hence the preimage of a full set is full.
\end{lemma}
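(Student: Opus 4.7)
The plan is to take two equivalent cuts $C_1, C_2 \in \cC(R)$ and verify that $\tilde\iota(C_1), \tilde\iota(C_2) \in \cC(F)$ are equivalent. Equivalence classes in $\cC(R)$ are either singletons (for non-ball cuts) or two-element sets $\{B_0\mR, B_0\pR\}$ for some ball $B_0 \subseteq R$. The case $C_1 = C_2$ is trivial, so I need only treat $C_1 = B_0\mR$, $C_2 = B_0\pR$, and split on whether $B_0$ is $R$ itself or a proper ball.

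If $B_0 \neq R$, I can quote the displayed formula immediately preceding the lemma: it identifies $\tilde\iota(B_0\mR) = B_S(a_0, F)\mF$ and $\tilde\iota(B_0\pR) = B_S(a_0, F)\pF$ (with $a_0$ and $S$ as in Lemma~\ref{cutfill} and Remark~\ref{remS}), which are the two edges of the same ball in $F$ and hence are equivalent. If $B_0 = R$, the definition instead gives $\tilde\iota(C_1) = R\mF$ and $\tilde\iota(C_2) = R\pF$, and I would produce a ball in $F$ having exactly these two cuts as its edges. The natural candidate is $B' := \{f \in F : |f| \leq r \text{ for some positive } r \in R\}$, the convex hull of $R$ in $F$. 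Since $R$ is closed under addition and negation, $B'$ is a convex subgroup of $(F,+)$; every convex subgroup of $(F,+)$ is of the form $B_T(0,F)$ for a final segment $T$ of $vF$, and hence is a ball in $F$. Because $R$ is cofinal and coinitial in $B'$, the smallest final (resp.\ initial) segment of $F$ containing $R$ coincides with the one containing $B'$, so $R\mF = B'\mF$ and $R\pF = B'\pF$, and equivalence is established.

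The second claim---that $\tilde\iota^{-1}(U)$ is full whenever $U \subseteq \cC(F)$ is full---then follows immediately: for $C_1 \in \tilde\iota^{-1}(U)$ and $C_2$ equivalent to $C_1$, the first part yields $\tilde\iota(C_2)$ equivalent to $\tilde\iota(C_1) \in U$, and fullness of $U$ forces $\tilde\iota(C_2) \in U$, i.e.\ $C_2 \in \tilde\iota^{-1}(U)$. The only substantive step is the case $B_0 = R$; the obstacle there is recognizing that the ``outermost'' cuts $R\mF$ and $R\pF$ of $\cC(F)$ are the two edges of a common ball in $F$, namely the convex hull of $R$, which is itself a ball precisely because $R$ is an additive subgroup.
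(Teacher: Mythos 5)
Your proof is correct and follows the same line as the paper's terse justification (the paper merely points to the displayed formula and the assignments $\tilde\iota(R\mR)=R\mF$, $\tilde\iota(R\pR)=R\pF$, and declares the lemma shown). The one detail the paper leaves implicit --- that $R\mF$ and $R\pF$ are in fact the two edges of a common ball in $F$, namely the convex hull of $R$, which is a ball because it is a convex subgroup of $(F,+)$ --- you have supplied correctly, so your write-up is a complete version of the paper's argument.
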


Let us also note:
\begin{proposition}
If $vR$ is cofinal in $vF$ (which implies that there is no $f\in F$ such
that $f>R$), then $\tilde\iota$ sends principal cuts to principal
cuts. Otherwise, no principal cut is sent to a principal cut.
\end{proposition}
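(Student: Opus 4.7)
I would first reduce to the case $C = a^+$ (the case $C = a^-$ is symmetric). Viewing this principal cut as the upper edge of $B_0 = B_\emptyset(a,R) = \{a\}$ with ball complement $D_0 = \{r \in R : r < a\}$, $E_0 = \{r \in R : r > a\}$, a quick calculation gives $v(E_0 - D_0) = vR$: for any $\gamma \in vR$, pick $c \in R^\times$ with $v(c) = \gamma$ and note that $a - c/2 \in D_0$ and $a + c/2 \in E_0$ have difference $c$. By Lemma~\ref{cutfill} and Remark~\ref{remS}, together with the definition of $\tilde\iota$ on upper ball edges, this yields $\tilde\iota(a^+) = B_S(a,F)\pF$, where $S$ is the largest final segment of $vF$ disjoint from $vR$.

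The next step is the observation that $S = \emptyset$ if and only if $vR$ is cofinal in $vF$. One direction is clear: if $vR$ is cofinal, then every element of $vF$ lies at or below some element of $vR$, so no non-empty final segment of $vF$ can avoid $vR$; conversely, any $\gamma_0 \in vF$ with $\gamma_0 > vR$ generates a non-empty final segment of $vF$ disjoint from $vR$. The parenthetical consequence is then immediate: if $f \in F$ satisfied $f > R$, then $f > nr$ for all $n \in \N$ and every $r \in R$ with $r > 0$, giving $v(f) < v(r)$ for every $r \in R^\times$; since $vR$ is a group, $-v(f)$ would be an upper bound for $vR$ in $vF$ not lying in $vR$, contradicting cofinality.

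In the cofinal case $S = \emptyset$, so $B_S(a,F) = \{a\}$ and $\tilde\iota(a^+) = a^+$ remains principal in $F$. In the non-cofinal case $S \neq \emptyset$, so $B_S(a,F)$ properly contains $\{a\}$, and the goal is to show that $B_S(a,F)\pF$ is not principal. For this I would argue separately that $B_S(a,F)$ has no maximum and that its right complement in $F$ has no minimum. The first is straightforward: $B_S(a,F) = a + B_S(0,F)$ is a coset of the non-trivial convex additive subgroup $B_S(0,F)$, and any $0 < c \in B_S(0,F)$ satisfies $2c > c$ with $2c \in B_S(0,F)$ since $v(2c) = v(c) \in S$. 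The main technical step is the second: if $e_0$ were the minimum above $B_S(a,F)$, I would set $f = (a+e_0)/2$ and check that $v(f - a) = v(e_0 - a) \notin S$ forces $f \notin B_S(a,F)$, while $v(f - a) < S$ forces $|b - a| < |f - a|$ for every $b \in B_S(a,F)$, so $f > B_S(a,F)$; but $f < e_0$, a contradiction. This midpoint argument is the only place where the assumption $S \ne \emptyset$ is genuinely used.
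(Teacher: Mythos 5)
Your proof is correct and follows essentially the same route as the paper: identify $\tilde\iota(a^+)$ as $B_S(a,F)\pF$ (similarly for $a^-$) via Lemma~\ref{cutfill} and Remark~\ref{remS}, then reduce to whether $S=\emptyset$, which you correctly relate to cofinality of $vR$ in $vF$. You go further than the paper in one respect: the paper simply asserts that these ball cuts are principal if and only if $S=\emptyset$, whereas you supply the verification (for $S\ne\emptyset$, the ball $B_S(a,F)$ has no maximum and its right complement in $F$ has no minimum); the latter could alternatively be read off from Lemma~\ref{nuele}, but your midpoint argument is a clean self-contained substitute. One small aside to tidy up: it is the no-maximum step, not the midpoint step, that genuinely requires $S\ne\emptyset$; the midpoint argument goes through even when $S=\emptyset$ (no element of a densely ordered field has an immediate successor), but in that case $B_\emptyset(a,F)=\{a\}$ has a maximum, so $a^+$ is principal via its left side.
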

\begin{proof}
A principal cut in $R$ is the upper or lower edge of a ball $B_\emptyset
(a_0,R)$. Take the ball $B_S(a_0,F)$ as in Lemma~\ref{cutfill}. By
definition, $\tilde{\iota}(B_\emptyset (a_0,R)\mR)=B_S(a_0,F)\mF$ and
$\tilde{\iota}(B_\emptyset (a_0,R)\pR)=B_S(a_0,F)\pF$. The latter cuts
are principal if and only if $S=\emptyset$. By Remark~\ref{remS}, $S=
\emptyset$ if and only if there is no $\gamma\in vF$ such that
$\gamma>vR$, that is, if and only if $vR$ is cofinal in $vF$.
\end{proof}

If there is at least one non-ball cut in $R$ that is filled in $F$, then
the embedding $\tilde\iota$ will {\it not} be continuous with respect to
the interval topology. Even worse:
\begin{proposition}
Take any extension $F|R$ of ordered fields. If there is at least one
non-ball cut in $R$ that is filled in $F$, then there exists no
embedding of $\cC(R)$ in $\cC(F)$ that is continuous with respect to the
interval topology and compatible with restriction.
\end{proposition}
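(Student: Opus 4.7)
The plan is to argue by contradiction. Assume such an embedding $\iota$ exists, pick a non-ball cut $C = (D, E)$ in $R$ that is filled in $F$, and derive two incompatible constraints on $\iota(C)$ from approximating $C$ by principal cuts in $\cC(R)$ from below and above.

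By Lemma~\ref{cutfill}, since $C$ is filled in $F$, we have $\mbox{\rm Betw}_F(D,E) = B$ for a non-empty ball $B = B_S(a,F)$ in $F$. I would first observe that the cuts in $\cC(F)$ restricting to $C$ are exactly those in the interval $[B\mF, B\pF]$: Lemma~\ref{rescut}(a) gives this set as $[D\pF, E\mF]$, and since $B\cap R = \emptyset$ (because $R = D\cup E$) together with the convexity of the ball $B$, one verifies $D\pF = B\mF$ and $E\mF = B\pF$. Moreover $B\mF < B\pF$, since even a singleton ball $B = \{a\}$ has $a\mF \neq a\pF$.

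Because $C$ is a non-ball cut it is non-principal, so $D$ has no maximum and $E$ no minimum. The family $\{d^+ : d \in D\}$, indexed by $D$ with its natural order, is then an upward-directed net in $\cC(R)$ converging to $C$ in the interval topology: any basic neighborhood $(C_1, C_2)$ of $C$ captures the tail, since Lemma~\ref{pred} yields some $a_0 \in D$ with $a_0^+ > C_1$, and then $d^+ \geq a_0^+$ for all $d \geq a_0$ in $D$. By continuity of $\iota$, $\iota(d^+) \to \iota(C)$ in $\cC(F)$, and by compatibility with restriction each $\iota(d^+)$ lies in the interval of cuts in $F$ restricting to $d^+$, whose right endpoint is $E_d\mF$ with $E_d := \{r \in R : r > d\}$ (Lemma~\ref{rescut}(a) again). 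I would then check that $E_d\mF \leq B\mF$: any $f \in F$ in the left cut set of $E_d\mF$ satisfies $f < r$ for every $r \in R$ with $r > d$, and picking $r \in D$ strictly above $d$ (possible because $D$ has no maximum) gives $f < r < b$ for every $b \in B$, so $f$ belongs to the left cut set of $B\mF$. Hence $\iota(d^+) \leq B\mF$ for all $d \in D$, and in a linearly ordered space the limit of a net bounded above by $B\mF$ is itself $\leq B\mF$, so $\iota(C) \leq B\mF$.

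The symmetric argument using the downward-directed net $\{e^- : e \in E\}$ yields $\iota(C) \geq B\pF$, contradicting $B\mF < B\pF$. The main obstacle is establishing the two key inequalities $E_d\mF \leq B\mF$ and its mirror $D_e\pF \geq B\pF$ (with $D_e := \{r \in R : r < e\}$); both rely crucially on the non-principality of $C$ at both ends to produce a separating witness strictly above $d$ in $D$ (respectively strictly below $e$ in $E$). Once these bounds are in hand, the convergence argument in the linearly ordered space $\cC(F)$ is routine.
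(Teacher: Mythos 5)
Your proof is correct, but it follows a genuinely different route from the paper's. The paper also starts from the ball $B=\mbox{\rm Betw}_F(D,E)$ supplied by Lemma~\ref{cutfill}, but then argues directly with open sets rather than with nets: writing $C'=\iota(C)\in[B\mF,B\pF]$, it splits into the two cases $C'\ne B\pF$ and $C'=B\pF$ and in each case produces one explicit open interval $I\subseteq\cC(F)$ containing $C'$ (namely $(C_1,B\pF)$ for any $C_1<B\mF$, respectively $(B\mF,C_2)$ for any $C_2>B\pF$) whose restriction to $\cC(R)$ --- which by Lemma~\ref{ipri} equals $\iota^{-1}(I)$ --- has $C$ as its last (resp.\ first) element; Lemma~\ref{oi} then shows this cannot be open because $C$, being a non-ball cut, is non-principal. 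You instead squeeze $\iota(C)$ from two sides by limits: the principal cuts $d^+$ ($d\in D$) and $e^-$ ($e\in E$) form nets converging to $C$ (non-principality of $C$ together with Lemma~\ref{pred} makes this work), Lemma~\ref{rescut}(a) plus the non-principality at both ends gives the bounds $\iota(d^+)\le B\mF$ and $\iota(e^-)\ge B\pF$, and closedness of the order-rays in the interval topology transfers these bounds to $\iota(C)$, yielding the impossible $B\pF\le\iota(C)\le B\mF$. Your argument avoids the paper's case split and never needs Lemma~\ref{oi} or Lemma~\ref{ipri}, at the mild cost of invoking net convergence; the paper's version stays at the level of open sets and exhibits the failure of continuity at a single concrete open interval. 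Both proofs ultimately hinge on the same two ingredients: the ball description of $\mbox{\rm Betw}_F(D,E)$ from Lemma~\ref{cutfill}, and the fact that a non-ball cut is non-principal (hence $D$ has no maximum and $E$ no minimum).
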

\begin{proof}
Take $C=(D,E)$ to be a non-ball cut in $R$ that is filled in $F$. Then
Lemma~\ref{cutfill} shows that $\mbox{\rm Betw}_F (D,E)$ is equal to a
ball $B$ in $F$. In order to be compatible with restriction, an
embedding has to send $C$ to a cut $C'$ in $F$ which is equal to $B\pF$,
$B\mF$, or a proper cut in $B$. Suppose that $C'\ne B\pF$. Take any cut
$C_1< B\mF$ and consider the open interval $I=(C_1,B\pF)$ in $\cC(F)$.
Then the restriction of $I$ to $\cC(R)$ is an interval in $\cC(R)$ with last
element $C$. This shows that the preimage of $I$ under any embedding
compatible with restriction is not open, as follows from Lemma~\ref{oi}
since $C$ is not a principal cut.

In the case of $C'=B\pF$, choose $C_2\in \cC(F)$ such that $B\pF<C_2$
and consider the open interval $I=(B\mF,C_2)$ in $\cC(F)$. Its
restriction to $\cC(R)$ is an interval with first element $C$, hence
again not open.
\end{proof}

The problem is that an open interval in $\cC(F)$ can end in a set that
fills a cut from $R$, in which case its preimage in $\cC(R)$ will include
an endpoint. However, a full open set will have to enter the between set
from both sides, and so we obtain the following positive result if we
switch from the interval to the full topology:

\begin{proposition}                               \label{cont}
Assume that $vR$ is a convex subgroup of $vF$. Then the embeddings
$\tilde\iota:\>\cC(R)\rightarrow \cC(F)$ constructed above are exactly the
embeddings that are continuous with respect to the full topology and
compatible with restriction.
\end{proposition}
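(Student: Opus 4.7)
The plan is to prove the biconditional in two directions. For ``$\Rightarrow$'' I must verify that each $\tilde\iota$ constructed above is both compatible with restriction and continuous in the full topology. Compatibility is immediate from the construction together with Lemma~\ref{rescut}: in each case the prescribed value $\tilde\iota(C)$ is one of the endpoints of the fiber $\res^{-1}(C) \subseteq \cC(F)$, so $\res \circ \tilde\iota = \mathrm{id}$. Fullness of $\tilde\iota^{-1}(U)$ for every full $U \subseteq \cC(F)$ is exactly Lemma~\ref{etoe}, so continuity reduces to showing $\tilde\iota^{-1}(U)$ is open in the interval topology of $\cC(R)$ whenever $U$ is full open.

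To prove this openness, I fix $C \in \tilde\iota^{-1}(U)$ and, using openness of $U$ and Lemma~\ref{pred}, produce $\alpha, \beta \in F$ with
\[
C_1' \leq \alpha\mF < \alpha\pF \leq \tilde\iota(C) \leq \beta\mF < \beta\pF \leq C_2'
\]
for some open interval $(C_1', C_2') \subseteq U$ around $\tilde\iota(C)$. Under the hypothesis that $vR$ is convex in $vF$, Lemma~\ref{rescut}(b) forces the fibers over non-principal ball cuts and over non-ball cuts not filled in $F$ to be singletons, so the restrictions $\res(\alpha\pF), \res(\beta\mF) \in \cC(R)$ strictly bracket $C$. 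Taking $V := (\res(\alpha\pF), \res(\beta\mF))$, for $C^* \in V$ order preservation of $\tilde\iota$ places $\tilde\iota(C^*)$ strictly above the whole fiber over $\res(\alpha\pF)$, and in particular above $\alpha\pF$, and symmetrically strictly below $\beta\mF$, whence $\tilde\iota(C^*) \in (\alpha\pF, \beta\mF) \subseteq U$. This handles the generic case.

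The main obstacle is the case where $\tilde\iota(C)$ is an edge $B\mF$ or $B\pF$ of a non-trivial ball $B$ in $F$: the interior of $B$ need not be contained in $U$, so the interval $(\alpha\pF, \beta\mF)$ constructed above can fail to lie in $U$. Here I use fullness of $U$ to also pick an open interval of $U$ around the equivalent edge of $B$, and build $V$ correspondingly. If $C$ is itself a ball cut in $\cC(R)$ (and hence has an equivalent cut $\tilde C \in \cC(R)$), I take $V$ to be a disjoint union of two small open intervals, one around $C$ and one around $\tilde C$, each chosen via Lemma~\ref{rescut}(b) so that its $\tilde\iota$-image lies in the corresponding open neighborhood inside $U$ and bypasses the ball interior entirely. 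If $C$ is instead a non-ball cut filled in $F$ (so the equivalence class of $C$ in $\cC(R)$ is the singleton $\{C\}$), a single open interval around $C$ suffices, because Lemma~\ref{rescut}(b) places its $\tilde\iota$-image onto one side of $B$ or the other according as $C^* \in V$ lies above or below $C$ in $\cC(R)$, each side landing in one of the two open neighborhoods inside $U$.

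For ``$\Leftarrow$'', let $\iota': \cC(R) \to \cC(F)$ be an embedding that is compatible with restriction and continuous in the full topology. Compatibility forces $\iota'(C) \in \res^{-1}(C)$. By Lemma~\ref{rescut}(b) under the convexity hypothesis this fiber is a singleton whenever $C$ is a non-principal ball cut or a non-ball cut not filled in $F$, so $\iota'(C) = \tilde\iota(C)$ is forced. The only remaining freedom is for non-ball cuts $C$ filled in $F$ (fiber $[B\mF, B\pF]$) or principal cuts in $R$ when $vR$ is not cofinal in $vF$ (where the fiber is a non-trivial interval of the form $[B\mF, c\mF]$ or $[c\pF, B\pF]$). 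An interior choice for $\iota'(C)$ strictly between the two edges would fail continuity: approaching $C$ in $\cC(R)$ by principal cuts on either side produces $\iota'$-images whose own fibers lie strictly outside $[B\mF, B\pF]$ by Lemma~\ref{rescut}(b), and these cannot cluster in the full topology around a cut strictly interior to $[B\mF, B\pF]$, since such a cut is not equivalent to anything outside that closed interval. Hence $\iota'(C)$ must be one of the two edges, matching the choices allowed in the construction of $\tilde\iota$.
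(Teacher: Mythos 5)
Your proof proposal follows the same broad architecture as the paper's: for ``$\Rightarrow$'' reduce continuity to openness of preimages in the interval topology via Lemma~\ref{etoe}, split into a generic case (singleton fibers) and a special case where fullness is invoked; for ``$\Leftarrow$'' use compatibility plus Lemma~\ref{rescut} to pin down $\tilde\iota$ off a small set of cuts, then use concrete full open sets to exclude the remaining bad choices. However, there are two specific problems in your ``$\Rightarrow$'' argument that are worth naming.

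First, the diagnosis of the ``obstacle'' is wrong. You write that the interval $(\alpha\pF,\beta\mF)$ ``can fail to lie in $U$'' because it may cross the interior of the ball $B$. But by construction $C_1'\leq\alpha\mF<\alpha\pF$ and $\beta\mF<\beta\pF\leq C_2'$, so $(\alpha\pF,\beta\mF)\subseteq(C_1',C_2')\subseteq U$ always; this interval never escapes $U$. The genuine failure mode is different: when the fiber over $C$ has more than one element, one can have $\res(\alpha\pF)=C$ or $\res(\beta\mF)=C$ even though $\alpha\pF,\beta\mF\neq\tilde\iota(C)$, so that $V=(\res(\alpha\pF),\res(\beta\mF))$ is not a neighborhood of $C$ at all. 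Your patch using fullness happens to address the real problem too, but the stated reason for needing it does not hold up.

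Second, your case split is not airtight. You describe the obstacle case as ``$\tilde\iota(C)$ is an edge of a non-trivial ball $B$ in $F$,'' and for $C$ a ball cut in $\cC(R)$ you propose $V$ to be the union of small intervals around $C$ and its equivalent cut $\tilde C$. Under the convexity hypothesis, a non-principal ball cut $C=B_0\pR$ with $B_0\neq R$ nontrivial also lands in this described obstacle case (its image $B_0\pF$ is the upper edge of a nontrivial ball $B_S(a_0,F)$), yet $C$ and $\tilde C=B_0\mR$ are not adjacent, so the union of intervals around them omits the whole segment $(B_0\mR,B_0\pR)$ and is not a neighborhood of $C$; worse, $(C_1,B_0\mR]$ is not even open since $B_0\mR$ is not of the form $a\mR$ (Lemma~\ref{oi}). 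These cuts must instead be absorbed into your generic case, which they can be because their fiber is a singleton by Lemma~\ref{rescut}(b) — which you do acknowledge — but as written your dichotomy assigns them to the obstacle case, where your remedy fails. The paper avoids this by first asking whether $C$ is an endpoint of the preimage of the chosen interval, and only then concluding that more than one cut in $F$ restricts to $C$; convexity then forces exactly the paper's cases (a), (b), (c), which are precisely the ones where the two equivalent cuts are either adjacent or the two extremes $C_{\pm\infty}$. Tightening your case split to that criterion would fix the issue without otherwise changing your argument.

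The ``$\Leftarrow$'' direction is essentially correct and matches the paper's, though it is compressed: make sure you intend ``strictly interior to $[B\mF,B\pF]$'' (which, combined with the fiber constraint, singles out the correct edge, including in the principal case where $a\pF$ is interior and must be excluded), and not ``strictly interior to the fiber.''
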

\begin{proof}
Take an embedding $\tilde\iota:\>\cC(R)\rightarrow \cC(F)$ as constructed
above. In view of Lemma~\ref{rescut}, $\tilde\iota$ is compatible with
restriction.

By virtue of Lemma~\ref{etoe}, in order to show that $\tilde\iota$ is
continuous with respect to the full topology, it suffices to show that
the preimage of any full open set $U$ is open in the interval topology
of $\cC(R)$. Take $C\in \cC(R)$ with $\tilde{\iota}(C)\in U$. Since $U$
is open in the interval topology of $\cC(F)$, there is an open interval
$I \subseteq U$ which contains $\tilde{\iota}(C)$. The preimage of $I$
under $\tilde{\iota}$ is again an interval, and if $C$ is not an
endpoint of it, then $C$ lies in some open subinterval of this preimage.

Now suppose that $C$ is an endpoint of the preimage of $I$. Then either
all cuts in $I$ on the left side of $\tilde{\iota}(C)$ restrict to $C$,
or all cuts in $I$ on the right side of $\tilde{\iota}(C)$ restrict to
$C$. In both cases, we have that more than one cut in $F$ restricts to
$C$. Since we have assumed $vR$ to be a convex subgroup of $vF$,
Lemma~\ref{rescut} shows that we are in one of the following cases:
\sn
a) $C$ is a non-ball cut,
\sn
b) $C$ is a principal cut,
\sn
c) $C=R\mR$ or $C=R\pR$.
\sn
In all three cases, by our construction of $\tilde{\iota}$, we have that
$\tilde{\iota}(C)=B\mF$ or $\tilde{\iota}(C)=B\pF$ for some ball $B$ in
$F$. Denote the restriction of $B\mF$ to $R$ by $C_1\,$, and the
restriction of $B\pF$ to $R$ by $C_2\,$. Then $C=C_1$ or $C=C_2\,$.

Since $U$ is assumed to be full, $B\mF,B\pF\in U$ and since $U$ is open,
$B\mF\in I_1$ and $B\pF\in I_2$ for some open intervals $I_1$ and $I_2$
contained in $U$.

We first deal with cases a) and b). In both cases, $B\mF$ is the
smallest cut that reduces to $C_1$ and $B\pF$ is the largest cut that
reduces to $C_2\,$. The open interval $I_1$ contains a cut on the left
of $B\mF$, which consequently restricts to a cut $C'_1<C_1\,$.
Similarly, $I_2$ contains a cut on the right of $B\pF$, which
consequently restricts to a cut $C'_2>C_2$. For every $C'\in (C_2,C'_2)$
we have that $\tilde{\iota}(C_2)<\tilde{\iota}(C')<\tilde{\iota}(C'_2)$,
hence $\tilde{\iota}(C') \in I_2$. This shows that $[C_2,C'_2)$ is
contained in the preimage of $I_2\,$. Similarly, it is shown that
$(C'_1,C_1]$ is contained in the preimage of $I_1\,$.

In case a), both $B\mF$ and $B\pF$ restrict to $C$, so we have $C=C_1=
C_2$. In case b), where $C=a\mR$ or $C=a\pR$ for some $a\in R$, $B\mF$
restricts to $a\mR$ and $B\pF$ restricts to $a\pR$. In both cases,
$(C'_1,C_1]\cup [C_2,C'_2)=(C'_1,C'_2)$. It follows that $C$ has the
open neighborhood $(C'_1,C'_2)$ which is contained in the preimage of $U$.

Now we consider case c). In this case, $\tilde{\iota}(C)=R\mF$, the
largest cut that restricts to $C_1=R\mR$, or $\tilde{\iota}(C)=R\pF$,
the smallest cut that restricts to $C_2=R\pR$. The open interval $I_1$
contains a cut on the right of $R\mF$, which consequently restricts to a
cut $C'_1>R\mR\,$. Similarly, $I_2$ contains a cut on the left of
$R\pF$, which consequently restricts to a cut $C'_2<R\pR$. For every
$C'\in (C'_2,R\pR)$ we have that $\tilde{\iota}(C'_2)<\tilde{\iota}(C')<
\tilde{\iota}(R\pR)$, hence $\tilde{\iota}(C') \in I_2$. This shows that
$(C'_2,R\pR]$ is contained in the preimage of $I_2\,$. Similarly, it is
shown that $[R\mR,C'_1)$ is contained in the preimage of $I_1\,$. Now
one of these two intervals is an open neighborhood of $C$.

It follows in all three cases that $C$ has an open neighborhood
which is contained in the preimage of $U$. This proves that the
restriction of $U$ is open.
%
%

\parm
Now assume that $\tilde{\iota}'$ is an embedding of $\cC(R)$ in $\cC(F)$,
compatible with restriction. Suppose that there is a cut $C$ in $\cC(R)$
such that its image $\tilde{\iota}'(C)$ is not in accordance with our
above construction.

First, we consider the case of $C=(D,E)$ being a non-ball cut. Then our
assumption and the compatibility with restriction yield that $D\pF<
\tilde{\iota}'(C)<E\mF$ in $\cC(F)$. If the ball $B_S(a,F)$ is chosen
as in Lemma~\ref{cutfill}, then $D\pF=B_S(a,F)\mF$ and $E\mF=
B_S(a,F)\pF$. Therefore, the open interval $(D\pF,E\mF)$ in $\cC(F)$ is
full by Lemma~\ref{fullballs}. But the preimage of this interval is the
singleton $\{C\}$, hence not open.

Now we consider the case of $C=B_0\pR$ for some ball $B_0$ in $R$; the
case of $C=B_0\mR$ is symmetrical. If $(D,E)$ is the ball complement of
$B_0$ in $R$, then our assumption and the compatibility with restriction
yield that $D\pF<B_0\pF\leq\tilde{\iota}'(C)<E\mF$ in $\cC(F)$. The same
argument as before shows that $(D\pF,E\mF)$ is a full open interval in
$\cC(F)$. Its preimage in $\cC(R)$ has $C$ as its last element. Since
$C$ is an upper edge of a ball not equal to $R$, it follows that this
interval is not open.

Finally, we consider the case of $C=R\pR$; the case of $C=R\mR$ is
symmetrical. Then our assumption and the compatibility with restriction
yield that $R\pF<\tilde{\iota}'(C)$ in $\cC(F)$. The open set
$[C_{-\infty},R\mF)\cup (R\pF,C_{\infty}]$ in $\cC(F)$ is full by
Lemma~\ref{fullballs}. But the preimage of it is either $\{R\pR\}$ or
$\{R\mR,R\pR\}$, hence not open.
\end{proof}

Our positive result is contrasted by the following negative result:

\begin{proposition}
Assume that $vR$ is not a convex subgroup of $vF$. Then there are no
embeddings $\tilde\iota:\>\cC(R)\rightarrow \cC(F)$ that are continuous
with respect to the full topology and compatible with restriction.
\end{proposition}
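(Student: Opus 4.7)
The plan is to use the nonconvexity hypothesis to build, inside $F$, two nested balls $B'\subsetneq B$ with $B'\cap R=B\cap R=B_0$ for a specific proper non-trivial ball $B_0$ in $R$, and then to exhibit a full open set in $\cC(F)$ whose preimage under any such $\tilde\iota'$ can only be the complement of an open interval between two non-principal cuts of $\cC(R)$; by Lemma~\ref{oi} this complement cannot itself be open, and continuity breaks.

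To set up, pick $\gamma\in vF\setminus vR$ with some $\alpha,\beta\in vR$ satisfying $\alpha<\gamma<\beta$, and set $S_0:=\{x\in vR:x>\gamma\}$, a proper non-empty final segment of $vR$. Let $B_0:=B_{S_0}(0,R)$ and write $C^-:=B_0\mR$, $C^+:=B_0\pR$; both are non-principal cuts of $\cC(R)$ (the convex subgroup $B_0$ of $R$ has neither maximum nor minimum). Let $S$ be the final segment of $vF$ supplied by Lemma~\ref{cutfill} for the ball complement of $B_0$ in $R$, and let $T_{\min}$ be the smallest final segment of $vF$ containing $S_0$. Put $B:=B_S(0,F)$ and $B':=B_{T_{\min}}(0,F)$. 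One checks directly that $S\cap vR=T_{\min}\cap vR=S_0$, so $B\cap R=B'\cap R=B_0$, and that $B_0\subseteq B'\subseteq B$; moreover $\gamma\in S\setminus T_{\min}$, so $B'\subsetneq B$ strictly, witnessed for instance by any element of valuation $\gamma$. Consequently $B\mF<B'\mF\le B_0\mF<B_0\pF\le B'\pF<B\pF$ in $\cC(F)$.

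Suppose now that $\tilde\iota'$ is continuous in the full topology and compatible with restriction. The first step is to rerun, on the full open set $(B\mF,B\pF)$ (full by Proposition~\ref{fullballs}), the argument used in the proof of Proposition~\ref{cont}: Lemma~\ref{rescut} identifies the restriction fibres of $C^-$ and $C^+$ as $[B\mF,B_0\mF]$ and $[B_0\pF,B\pF]$ respectively, and together with order preservation this places $\tilde\iota'(C)$ strictly inside $(B\mF,B\pF)$ exactly when $C\in(C^-,C^+)$, and strictly outside $[B\mF,B\pF]$ when $C\notin[C^-,C^+]$. The preimage of $(B\mF,B\pF)$ is then one of the four intervals with endpoints $C^-,C^+$, and only $(C^-,C^+)$ is open in the interval topology on $\cC(R)$ because $C^\pm$ are non-principal (Lemma~\ref{oi}). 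Hence $\tilde\iota'(C^-)=B\mF$ and $\tilde\iota'(C^+)=B\pF$.

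For the contradiction, consider $U:=\cC(F)\setminus[B'\mF,B'\pF]$, which is full open by Proposition~\ref{fullballs} applied to the ball $B'$. Tracing the preimage: for $C\in(C^-,C^+)$ the value $\tilde\iota'(C)$ lies in $(B\mF,B\pF)$ and restricts to $C\ne C^\pm$, so Lemma~\ref{rescut} forces $\tilde\iota'(C)\in(B_0\mF,B_0\pF)\subseteq[B'\mF,B'\pF]$, giving $C\notin\tilde\iota'^{-1}(U)$; for $C=C^\pm$, $\tilde\iota'(C^-)=B\mF$ and $\tilde\iota'(C^+)=B\pF$ lie strictly outside $[B'\mF,B'\pF]$, so $C^\pm\in\tilde\iota'^{-1}(U)$; and for $C\notin[C^-,C^+]$, $\tilde\iota'(C)$ lies outside $[B\mF,B\pF]\supseteq[B'\mF,B'\pF]$, so $C\in\tilde\iota'^{-1}(U)$. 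The preimage is therefore $\cC(R)\setminus(C^-,C^+)$; but it contains $C^-$ while every interval-neighbourhood of the non-principal cut $C^-$ meets $(C^-,C^+)$, so the preimage is not open at $C^-$ and continuity fails. The main obstacle is constructing the distinct balls $B'\subsetneq B$ in $F$ with $B'\cap R=B\cap R=B_0$; this is exactly where nonconvexity enters, via $\gamma\in S\setminus T_{\min}$.
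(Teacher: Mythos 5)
Your proof is correct, and it takes a genuinely different route from the paper's. The paper fixes the same ball $B_0=B_{S_0}(0,R)$ and the ball $B=B_S(0,F)$ from Lemma~\ref{cutfill}, notes $B_0\pF<B\pF$, and then does a direct two-case analysis on where $\tilde\iota(B_0\pR)$ sits in the fibre $[B_0\pF,B\pF]$: if $\tilde\iota(B_0\pR)>B_0\pF$ it applies Proposition~\ref{fullballs} to $[C_{-\infty},B_0\mF)\cup(B_0\pF,C_{\infty}]$, and if $\tilde\iota(B_0\pR)=B_0\pF$ it applies it to $(B\mF,B\pF)$; in both cases the preimage is a half-open interval at a non-principal endpoint. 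You instead first use the full open set $(B\mF,B\pF)$ together with Lemma~\ref{oi} to pin down the images exactly ($\tilde\iota'(B_0\mR)=B\mF$, $\tilde\iota'(B_0\pR)=B\pF$), and only then derive the contradiction from a second full open set $\cC(F)\setminus[B'\mF,B'\pF]$. Your introduction of the auxiliary ball $B'=B_{T_{\min}}(0,F)$ is a real technical improvement: since $B_0$ itself is not a ball in $F$ (its complement in $F$ contains elements with values in $vF\setminus vR$), Proposition~\ref{fullballs} does not literally apply to $[B_0\mF,B_0\pF]$, and the paper's appeal to it in the first case is a small gap. Your observation that $B_0$ is coinitial and cofinal in $B'$ (so $B_0\mF=B'\mF$ and $B_0\pF=B'\pF$) fills that gap cleanly, and your proof is arguably the more careful of the two even if slightly longer.
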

\begin{proof}
If $vR$ is not a convex subgroup of $vF$, then there are $\alpha,\beta
\in vR$ and $\gamma\in vF\setminus vR$ such that $\alpha<\gamma<\beta$.
Take $S_0:=\{\delta\in vR\mid \gamma<\delta\}$ and $B_0:=B_{S_0}(0,R)$.
Note that $B_0\ne R$ because $\alpha\notin S_0\,$, and that $B_0$ is not
a singleton because $\beta\in S_0\,$.

Now if $B_S(0,F)$ is as in Lemma~\ref{cutfill}, then it follows from
Remark~\ref{remS} that $\gamma\in S\setminus S_0\,$. This implies that
$B_{S_0}(0,R)$ is not cofinal in $B_S(0,F)$, whence $B_0\pF<
B_S(0,F)\pF$. Now assume that $\tilde\iota:\>\cC(R)\rightarrow \cC(F)$
is an embedding compatible with restriction. Then by Lemma~\ref{rescut},
$B_0\pF\leq \tilde\iota(B_0\pR)\leq B_S(0,F)\pF$. Suppose first that
$B_0\pF< \tilde\iota(B_0\pR)$. By Lemma~\ref{fullballs}, the open
neighborhood $U:= [C_{-\infty},B_0\mF) \cup (B_0\pF,C_{\infty}]$ of
$\tilde\iota (B_0\pR)$ in $\cC(F)$ is full. But $\tilde\iota^{-1}(U)=
[C_{-\infty}, B_0\mR) \cup [B_0\pR,C_{\infty}]$ or $\tilde\iota^{-1}(U)=
[C_{-\infty},B_0\mR] \cup [B_0\pR,C_{\infty}]$ in $\cC(R)$, both of
which are not open since $B_0$ is not a singleton and therefore $B_0\pR$
is not the immediate successor of $B_0\mR$.

Suppose now that $B_0\pF=\tilde\iota(B_0\pR)$. Again by
Lemma~\ref{fullballs}, the open neighborhood $U:=(B_S(0,F)\mF,
B_S(0,F)\pF)$ of $\tilde\iota (B_0\pR)$ in $\cC(F)$ is full. But
$\tilde\iota^{-1}(U)= (B_0\mR,B_0\pR]$ or $\tilde\iota^{-1}(U)=
[B_0\mR,B_0\pR]$ in $\cC(R)$, both of which are not open since
$B_0\ne R$.
\end{proof}

%
%
%
%
\section{Embeddings of $M(R(y))$ in $M(F(y))$}    \label{embM}
We will now consider an extension of formally real fields $F|R$, with
$R$ real closed, but not necessarily archimedean. We will first consider
the case where also $F$ is real closed.

We assume that $vR$ is convex in $vF$ and start from one of the
embeddings $\tilde\iota:\>\cC(R)\rightarrow \cC(F)$ constructed in the
previous section (cf.\ Proposition~\ref{cont}). We define an embedding
\[
\iota:\>M(R(y))\longrightarrow M(F(y))
\]
in the following way. If $M(R(y))\ni\zeta=\lambda_{R(y)}\circ \chi_R(C)$
for a cut $C$ in $R$, then we set
\[
\iota(\zeta)\>:=\> \lambda_{F(y)}\circ\chi_F(\tilde\iota(C))\;.
\]
Since $\tilde\iota$ is compatible with the equivalence of cuts, the
embedding $\iota$ is well-defined and the diagram

\begin{displaymath}
{
\begin{array}{ccc}
\cC(F) & \stackrel {\lambda_{F(y)}\circ \chi_F^{ }}{\longrightarrow }
& M(F(y))\\\\
{\tilde\iota}{ \Big {\uparrow}} & &{\iota}{\Big  \uparrow} \\\\
\cC(R) &\stackrel {\lambda_{R(y)}\circ \chi_R^{ }}{\longrightarrow }
& M(R(y))
\end {array}
}
\end{displaymath}
%
%
%
commutes.

\begin{theorem}                             \label{thembM}
Take an extension $F|R$ of real closed fields. If $vR$ is convex in
$vF$, then the embedding $\iota$ as defined above does not depend on the
particular choice of $\tilde{\iota}$ and is continuous and compatible
with restriction.

Conversely, if $\iota:M(R(y))\rightarrow M(F(y))$ is continuous and
compatible with restriction, then it induces an embedding
$\tilde{\iota}:\cC(R)\rightarrow\cC(F)$ continuous w.r.t.\ the full
topology and compatible with restriction, such that the above
diagram commutes, and $vR$ is convex in $vF$.
\end{theorem}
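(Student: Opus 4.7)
The plan is to prove both directions by exploiting the description of $M(R(y))$ as the quotient of $\cC(R)$ modulo the equivalence of cuts (Proposition \ref{lc}), the commuting restriction diagram of Proposition \ref{diag}, and the characterization in Proposition \ref{cont} of full-continuous, restriction-compatible embeddings of cut spaces.

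For the forward direction, I would first verify that $\iota$ is well-defined. The construction of $\tilde\iota$ has freedom only at non-ball cuts $(D,E)$ of $R$ that are filled in $F$; by Lemma \ref{cutfill}, at any such cut both possible values $D\pF$ and $E\mF$ are the lower and upper edges of the same ball in $F$, hence equivalent, so Proposition \ref{ch} guarantees they have the same image under $\lambda_{F(y)}\circ\chi_F$. Continuity of $\iota$ then follows from a quotient argument: $\lambda_{F(y)}\circ\chi_F\circ\tilde\iota$ is continuous from $\cC(R)$ with the full topology to $M(F(y))$ by Propositions \ref{lc} and \ref{cont}, it equals $\iota\circ(\lambda_{R(y)}\circ\chi_R)$ by construction, and $\lambda_{R(y)}\circ\chi_R$ is a quotient map (Proposition \ref{lc}), so $\iota$ inherits continuity. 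Compatibility with restriction reduces to $\res\circ\tilde\iota=\mathrm{id}$, which is built into the construction of $\tilde\iota$ (cf.\ Lemma \ref{rescut}), combined with the commuting square of Proposition \ref{diag} and surjectivity of $\lambda_{R(y)}\circ\chi_R$.

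For the converse, given $\iota$, the plan is to define $\tilde\iota(C)$ by selecting, for each $C\in\cC(R)$, a cut in $(\lambda_{F(y)}\circ\chi_F)^{-1}(\iota(\lambda_{R(y)}\circ\chi_R(C)))$ whose restriction to $R$ equals $C$. Existence of such a choice follows from $\res\circ\iota=\mathrm{id}$ together with Proposition \ref{diag}, which force the restrictions of the preimage equivalence class in $\cC(F)$ to cover the equivalence class of $C$ in $\cC(R)$. Order preservation of $\tilde\iota$ would follow because $\iota$ is compatible with restriction and the order on $\cC(R)$ is recovered through restriction from principal cuts, which lie dense (Lemma \ref{pcdens}). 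Continuity of $\tilde\iota$ in the full topology is deduced from continuity of $\iota$ by pulling back full open sets through $\lambda_{F(y)}\circ\chi_F$ and using the quotient structure. Once such a $\tilde\iota$ is produced, the final proposition of Section 4 (nonexistence of such $\tilde\iota$ when $vR$ is not convex in $vF$) forces $vR$ to be a convex subgroup of $vF$ by contraposition.

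The main obstacle is the converse construction at ball cuts of $R$. For $C=B_0\pR$ with $B_0$ a nontrivial ball in $R$, the equivalence class of $C$ in $\cC(R)$ has two elements $\{B_0\mR,B_0\pR\}$, and one must argue that the preimage equivalence class of $\iota(\lambda_{R(y)}\circ\chi_R(C))$ in $\cC(F)$ has the form $\{B\mF,B\pF\}$ for some ball $B$ in $F$ with $B\cap R = B_0$. This is the key input needed to pick $\tilde\iota(B_0\mR)$ and $\tilde\iota(B_0\pR)$ as distinct cuts with the correct restrictions; it is here that compatibility of $\iota$ with restriction must be lifted to the ordering level via the Baer-Krull correspondence, in conjunction with the classification of restriction behavior given by Lemma \ref{rescut}.
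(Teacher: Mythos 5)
Your forward direction is sound and essentially parallels the paper, though you phrase continuity of $\iota$ via the universal property of the quotient map $\lambda_{R(y)}\circ\chi_R$ rather than chasing open sets explicitly as the paper does; both work. Well-definedness via Lemma~\ref{cutfill} plus Proposition~\ref{ch}, and the closing step deducing convexity of $vR$ from the negative proposition of Section~4, are also correct.

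The gap is in the converse, precisely at the ``main obstacle'' you flag. You correctly identify that for a ball cut $C=B_0\pR$ one must show that the $(\lambda_{F(y)}\circ\chi_F)$-fiber of $\xi'=\iota(\xi)$ is $\{B\mF,B\pF\}$ with $B\cap R=B_0$, but you propose to get this from ``compatibility with restriction lifted via Baer--Krull, in conjunction with Lemma~\ref{rescut}.'' That package is not enough. Baer--Krull (or Lemma~\ref{nuele}) only tells you the fiber of $\xi'$ has at most two elements, and Lemma~\ref{rescut}(b) only locates, in $\cC(F)$, the interval of cuts restricting to $B_0\pR$ (namely $[B_0\pF,\,B_S(a_0,F)\pF]$). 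Compatibility forces every cut in the fiber of $\xi'$ to restrict into $\{B_0\mR,B_0\pR\}$, but it does \emph{not} force both of $B_0\mR$ and $B_0\pR$ to actually occur as restrictions. In particular, $\xi'$ could a priori correspond to a singleton fiber $\{C'\}$ with $C'$ a non-ball cut lying strictly between $B_0\pF$ and $B_S(a_0,F)\pF$, or to $\{B'\mF,B'\pF\}$ for a ball $B'$ with $B'\cap R=\emptyset$; in either case $\res(\xi')=\xi$ still holds, yet no injective $\tilde\iota$ making the diagram commute exists. The paper rules these out using the \emph{continuity} of $\iota$, which you never invoke at this point: taking the full open interval $(B\mF,B\pF)$, its image $U$ under $\lambda_{F(y)}\circ\chi_F$ is open in $M(F(y))$ and contains $\xi'$, while $\iota^{-1}(U)=\lambda_{R(y)}\circ\chi_R$ applied to the restriction of $(B\mF,B\pF)$, which has a largest element $B_0\pR$ and hence is not open---a contradiction with continuity. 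Without this argument your construction of $\tilde\iota$ does not go through.
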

\begin{proof}
Take $\tilde\iota$ as constructed in the previous section. We show that
$\iota$ is continuous. Take any open set $U$ in $M(F(y))$. By
Proposition~\ref{lc}, its preimage $U_1$ in $\cC(F)$ is a full open set.
Then by Proposition~\ref{cont}, the preimage $U_2$ of $U$ in $\cC(R)$ is
a full open set. Again by Proposition~\ref{lc}, the image $U_3$ of $U_2$
in $M(R(y))$ is open.
%
%
From Lemma~\ref{ipri} we know that $\res(U)$ is the preimage of $U$
under $\iota$. But from the commutativity of the diagram in
Proposition~\ref{diag} we know that
\[
\res(U)\>=\>\res\circ\lambda_{F(y)}\circ \chi_F(U_1)\>=\>
\lambda_{R(y)} \circ \chi_R\circ\res (U_1)\>=\>U_3\;.
\]
So the preimage of $U$ under $\iota$ is open. This proves the continuity
of $\iota$.

\pars
In the construction of $\tilde\iota$ in the previous section the only
freedom we had was to choose either the upper or the lower edge of the
ball which fills a non-ball cut in $R$; but these cuts correspond to the
same $\mathbb R$-place in $M(F(y))$. This shows that all embeddings
$\tilde \iota$ constructed in the previous section determine the same
embedding $\iota$.

\pars
We will now prove the second assertion. Take $\iota$ as in the
assumption. For each $C\in\cC(R)$, we wish to define $\tilde\iota(C)$
such that
\[
\lambda_{F(y)}\circ \chi_F\circ\tilde\iota\,(C)\>=\>
\iota\circ\lambda_{R(y)} \circ \chi_R\,(C)\;.
\]
Set $\xi:= \lambda_{R(y)} \circ \chi_R(C)\in M(R(y))$ and $\xi':=
\iota(\xi)$. Since $\iota$ is compatible with restriction, $\xi$ is the
restriction of $\xi'$ to $R(y)$. By the commutativity of the diagram in
Proposition~\ref{diag}, we find that if $C'\in\cC(F)$ is sent to $\xi'$
by $\lambda_{F(y)}\circ \chi_F\,$, then $\res(C')$ must be sent to $\xi$
by $\lambda_{R(y)} \circ \chi_R$.

If $C$ is a non-ball cut, then choose any $C'\in\cC(F)$ such that
$\lambda_{F(y)}\circ \chi_F(C'_1)=\xi'$ and define $\tilde\iota(C):=
C'$. Since $C$ is the only cut in $R$ that is sent to $\xi$ by
$\lambda_{R(y)} \circ \chi_R$, it follows that $\res(C')=C$.

If $C$ is a ball cut, that is, $C=B_0\mR$ or $C=B_0\pR$ for some ball
$B_0$ in $R$, then we have to find images for both $B_0\mR$ and
$B_0\pR$. We claim that the continuity of $\iota$ implies that the
preimage of $\xi'$ under $\lambda_{F(y)}\circ \chi_F$ is $\{B\mF,B\pF\}$
for some ball $B$ in $F$ with $\res(B\mF)=B_0\mR$ and $\res(B\pF)=
B_0\pR$. We treat the case of $B_0\ne R$ and leave the case of $B_0=R$
to the reader.

We write $B_0=B_{S_0}(a_0,R)$, take $S$ as in Lemma~\ref{cutfill}, and
set $B:=B_S(a_0,F)$. Suppose the preimage of $\xi'$ is not $\{B\mF,
B\pF\}$. Take $C'$ in the preimage. Then by what we have shown above,
$C'$ restricts to $B_0\mR$ or $B_0\pR$. We assume the latter case; the
former is symmetrical. Then $B_0\pF\leq C'<B\pF$. By
Proposition~\ref{fullballs}, the open interval $(B\mF, B\pF)$ is full,
so $U:=\lambda_{F(y)}\circ \chi_F ((B\mF, B\pF))$ is open in $M(F(y))$
and contains $\xi'$. The restriction $I$ of $(B\mF, B\pF)$ to $\cC(R)$
has $B_0\pR= \res(C')$ as its largest element, hence it is not open. The
same argument as in the first part of this proof shows that the preimage
$U'$ of $U$ under $\iota$ is equal to $\lambda_{F(y)}\circ \chi_F\circ
\res\, ((B\mF, B\pF))=\lambda_{F(y)}\circ \chi_F(I)$, which is not open.
But this contradicts the continuity of $\iota$. We see that the preimage
of $\xi'$ must be $\{B\mF, B\pF\}$. So we set $\tilde{\iota} (B_0\mR)=
B\mF$ and $\tilde{\iota}(B_0\pR)=B\pF$ and note that $\res(B\mF)=B_0\mR$
and $\res(B\pF)=B_0\pR$.

We have now defined a mapping $\tilde\iota: \cC(R)\rightarrow \cC(F)$
which is compatible with the restriction. Therefore, $\tilde\iota$
must be injective, and since the restriction preserves $\leq$ by
Lemma~\ref{contresC}, $\tilde\iota$ must preserve $<\,$. By definition,
$\tilde\iota$ also preserves equivalence.

It remains to show that $\tilde\iota$ is continuous w.r.t.\ the full
topology. Take a full open set $U$ in $\cC(F)$. By Proposition~\ref{lc},
$U_1:=\lambda_{F(y)}\circ\chi_F(U)$ is open. By Lemma~\ref{ipri}, $U_2
:=\res(U_1)$ is the preimage of $U_1$ under $\iota$, hence open since
$\iota$ is continuous. By the commutativity of the diagram in
Proposition~\ref{diag},
\[
U_2\>=\>\res\circ\lambda_{F(y)}\circ \chi_F(U)\>=\>
\lambda_{R(y)} \circ \chi_R\circ\res\, (U)\;.
\]
Thus, the full set $\res(U)$ in $\cC(R)$ is the preimage of $U_2\,$,
hence open by Proposition~\ref{lc}. Again by Lemma~\ref{ipri},
the full open set $\res(U)$ is the preimage of $U$ under $\tilde\iota$.
This proves the continuity of $\tilde\iota$.
\end{proof}

\parm
Now we will consider the case of $F$ not being real closed. We choose a
real closure $R'$ of $F$ and take $\iota':\>M(R(y))\rightarrow M(R'(y))$
to be the embedding constructed above. Since $\res_{R'(y)|F(y)}$ is
continuous (cf.\ Remark~\ref{full}, part 3)\,)
\[
\iota\>:=\>\res_{R'(y)|F(y)}\circ \iota'
\]
is a continuous mapping from $M(R(y))$ to $M(F(y))$. Since $\iota'$ is
compatible with the restriction
\[
\res_{R'(y)|R(y)}\>=\>\res_{F(y)|R(y)}\circ\res_{R'(y)|F(y)}\;,
\]
we see that $\iota$ is compatible with the restriction. For this reason,
it is also injective.

As the real closure $R'$ can be taken with respect to any ordering on
$F$, we may lose the uniqueness of $\iota$, However, we are able to show
the following partial uniqueness result:

\begin{theorem}                             \label{uniqnotrc}
Take two orderings $P_1$ and $P_2$ of $F$ which induce the same
$\R$-place, $R'_1$ and $R'_2$ the respective real closures of $F$, and
$\iota'_i:\>M(R(y))\rightarrow M(R'_i(y))$, $i=1,2$, the unique
continuous embeddings compatible with restriction. Consider the
following commuting diagram:
\begin{displaymath}
\xymatrix{
  & M(R'_1(y)) \ar[rd]|{ \text{res}_1} &\\
M(R(y)) \ar[ru]|{ \iota'_1} \ar[rd]|{ \iota'_2}
&&M(F(y))\ar[ll]|{ \text{res}} \\
 & M(R'_2(y)) \ar[ru]|{ \text{res}_2} & }
\end{displaymath}
Then
\[
\res_1\circ \iota'_1\>=\>\res_2\circ \iota'_2\>.
\]
\end{theorem}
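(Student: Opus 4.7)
My plan is to reduce to principal $\R$-places via density and continuity, and then to establish equality in that case by identifying the two canonical valuations on $F(y)$.

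Both $\res_1\circ\iota'_1$ and $\res_2\circ\iota'_2$ are continuous maps from $M(R(y))$ into the Hausdorff space $M(F(y))$, so by Lemma~\ref{ppdens} it suffices to prove equality when $\zeta$ is principal. Take $\zeta=\lambda_{R(y)}\chi_R(a\pR)$ for some $a\in R$ (the remaining principal cases are handled analogously). By Lemma~\ref{cutfill} and Remark~\ref{remS}, $\tilde\iota_i(a\pR)=B_S(a,R'_i)^{+_{R'_i}}$ where $S$ is the largest final segment of $vR'_i$ disjoint from $vR$. Setting $P'_i:=\chi_{R'_i}(\tilde\iota_i(a\pR))$ and $Q_i:=P'_i|_{F(y)}$, the commutativity recorded in Remark~\ref{full}(3) yields $\res_i(\iota'_i(\zeta))=\lambda_{F(y)}(Q_i)$, so the claim reduces to showing $w_1=w_2$, where $w_i$ is the canonical valuation of $(F(y),Q_i)$.

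Each $w_i$ extends $v_F$, the common canonical valuation of $F$ (common because $\lambda(P_1)=\lambda(P_2)$). From the cut construction one verifies: for every $n\geq 1$ and every $f\in F$ with $v_F(f)>vR$, the element $|f|^{1/n}\in R'_i$ has $R'_i$-valuation $v_F(f)/n$, which still exceeds $vR$, placing $|f|^{1/n}$ in the ball $B_S(a,R'_i)-a$; hence $y-a>|f|^{1/n}$ in $P'_i$, and restricting to $Q_i$ yields $(y-a)^n>|f|$, whence $n\cdot w_i(y-a) < v_F(f)$. Letting $f$ range over elements of $F$ with $v_F(f)>vR$, one concludes that for every $n\geq 1$, $n\cdot w_i(y-a)$ is strictly greater than every element of $vR$ and strictly smaller than every element of $vF$ above $vR$. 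In particular $w_i(y-a)$ does not lie in the divisible hull of $vF$: any putative $\delta/m\in vF$ in this cut would, upon multiplication by $m$, violate one of the inequalities. Crucially, the entire characterization depends only on the convex subgroup $vR\subseteq vF$ and not on the ordering $P_i$, so $w_1(y-a)$ and $w_2(y-a)$ fill precisely the same cut in $vF$.

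The main technical step, and the anticipated obstacle, is to deduce $w_1=w_2$ from this coincidence. Because $w_i(y-a)$ lies outside the divisible hull of $vF$, the iterates $\{j\cdot w_i(y-a):j\geq 0\}$ occupy pairwise distinct cosets of $vF$; therefore the Gauss valuation formula $w_i(\sum\alpha_j(y-a)^j)=\min_j(v_F(\alpha_j)+j\cdot w_i(y-a))$ holds with uniquely achieved minimum for every polynomial in $F[y]$. This formula pins down $w_i$ on $F[y]$, and hence on $F(y)$, purely in terms of $v_F$ and the cut position of $w_i(y-a)$. Since $v_F$ and that cut position are the same for $i=1,2$, we conclude $w_1=w_2$, and therefore $\lambda_{F(y)}(Q_1)=\lambda_{F(y)}(Q_2)$, completing the proof.
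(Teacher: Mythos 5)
Your overall strategy coincides with the paper's: reduce to principal places via density (Lemma~\ref{ppdens}) and the Hausdorffness of $M(F(y))$, and then, in the principal case, pin down the canonical valuation $w_i$ on $F(y)$ by showing that $w_i(y-a)$ realizes a cut over $vF$ that depends only on the convex subgroup $vR\subseteq vF$. The paper reaches the same conclusion slightly more directly, citing divisibility of $vR'_i$ to get rational independence of $v_{\zeta_i}(a-y)$ over $vR'_i$ and hence over $vF$; your inequality chase with $|f|^{1/n}$ is a more explicit route to the same fact, and it is sound (modulo the elided verification that $nw_i(y-a)>vR$, which is easy since $vR$ is divisible).

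There is, however, a genuine gap in the final step. From $w_1=w_2$ you conclude $\lambda_{F(y)}(Q_1)=\lambda_{F(y)}(Q_2)$ with no justification, but equality of the canonical valuations does not by itself give equality of the $\R$-places: an $\R$-place is the residue map $\mathcal O_{w}\to F(y)w$ followed by an embedding of the residue field into $\R$, and the orderings $Q_1,Q_2$ could a priori induce different such embeddings. You need the additional observation — which the paper's proof supplies explicitly — that rational independence of $w_i(y-a)$ over $vF$ forces the residue field of $w_i$ on $F(y)$ to equal the residue field $Fv_F$ of $F$ itself; consequently the residue embedding into $\R$ is determined by the restriction of $\lambda_{F(y)}(Q_i)$ to $F$, which is $\lambda_F(P_i)$, and these agree by the hypothesis $\lambda(P_1)=\lambda(P_2)$. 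Once that residue-field step is inserted, your argument closes and is essentially the paper's proof.
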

\begin{proof}
We will first show that the mappings coincide on all $\mathbb R$-places
of $R(y)$ determined by the principal cuts.

Suppose that $\zeta=\lambda\circ\chi(a^+)=\lambda\circ\chi(a^-)$, where
$a\in R$. Note that for the corresponding valuation $v_\zeta$ on $R(y)$,
we have that $vR< v_\zeta(a-y)$. Let $\zeta_i:=\iota'_i(\zeta)$, for
$i=1,2$. By the definition of the embedding $\iota'_i$, we have that
$\zeta_i$ is determined by the upper and lower edge of the ball
$B_{S_i}(a,R'_i)$ where $S_i=\{\alpha\in vR'_i\mid \alpha>vR\}$. Then
for the corresponding valuation $v_{\zeta_i}$ on $R'_i(y)$ we have that
$vR<v_{\zeta_i}(a-y)<S_i$ in $v_{\zeta_i}R'_i$. Since these value groups
are divisible (by \cite[Theorem 4.3.7]{ep}, $R'_i$ being real closed
fields), the values $v_{\zeta_i}
(a-y)$ are rationally independent over these value groups. Therefore,
the valuations $v_{\zeta_i}$ are uniquely determined by the natural
valuations on $R'_i$ and the values $v_{\zeta_i}(a-y)$. The same remains
true when we restrict to $F(y)$. There, by our assumption, the
restrictions of the natural valuations on $R'_i$ coincide, so the
restrictions of the valuations $v_{\zeta_i}$ to $F(y)$ must coincide,
too. Further, the residue fields of $v_{\zeta_i}$ on $F(y)$ are equal to
the residue field of $F$ because $v_{\zeta_i}(a-y)$ is rationally
independent over $vF$. Since the restrictions to $F$ of ${\zeta_1}$ and
${\zeta_2}$ coincide, the restrictions to $F(y)$ of these $\mathbb
R$-places coincide, as well. Therefore, $\res_1\circ \iota'_1(\zeta)
\>=\>\res_2\circ \iota'_2(\zeta)\>$.

Now take $\zeta_1=\res_1\circ \iota'_1(\zeta)$ and $\zeta_2 =\res_2
\circ \iota'_2(\zeta)$ for some $\zeta\in M(R(y))$ and suppose they are
distinct. Since $M(F(y))$ is Hausdorff, there are disjoint open
neighborhoods $U_1\ni \zeta_1$ and $U_2\ni \zeta_2\,$. The preimages of
$U_1$ and $U_2$ in $M(R(y))$ are open, and $\zeta$ lies in their
intersection. So this intersection is not empty, and by the density of
the principal places in $M(R(y))$ (cf.\ Lemma~\ref{ppdens}), there is a
principal place $\zeta_0$ in this intersection. But the images of
$\zeta_0$ under the two embeddings are equal and hence must lie in
$U_1\cap U_2\,$, a contradiction.
\end{proof}

%
%
%
%
\section{Embeddings of $M(R(y))$ in $M(F(y))$ for
archimedean $R$}                                \label{embMrc}
In this section we will consider an extension of formally real fields
$F|R$ in the special case where $R$ is archimedean real closed. The
general case has been treated in the previous section. Here, we wish to
give a different, more explicit construction of a continuous
embedding $\iota$ of $M(R(y))$ in $M(F(y))$ which is compatible with
restriction.

We choose any real place $\xi$ of $F$. Then $\ovl{F}:=\xi(F)\subseteq
\R$. Since $R$ is archimedean, we can assume that $\xi|_R=\mbox{\rm
id}_R\,$ and that $\ovl{F}|R$ is an extension of archimedean ordered
fields. By $\xi_y$ we denote the \bfind{constant extension} of $\xi$ to
$F(y)$, i.e., the unique extension of $\xi$ which is trivial on $R(y)$.
Its valuation ring is the smallest subring of $F(y)$ containing both the
valuation ring of $\xi$ and $R(y)$. The valuation associated with
$\xi_y$ is the {\bf Gau{\ss}} or {\bf functorial valuation} on $F(y)$
extending the valuation associated with $\xi$ on $F$. On polynomials in
$F[y]$ with coefficients in the valuation ring of $\xi$, $\xi_y$ acts by
applying $\xi$ to the coefficients. Therefore, the residue field of
$\xi_y$ is $\xi(F)(y)$.

For every $\zeta\in M(R(y))$ we define the constant extension
$\zeta_{\ovl{F}}$ of $\zeta$ to $\ovl{F}(y)$ as follows. As $\zeta$ is
trivial on the archimedean field $R$, it is determined by an irreducible
polynomial $p(y)\in R[y]$ (or by $1/y$). Since $R$ is real closed and
$\ovl{F}$ is formally real, such a polynomial $p$ remains irreducible
over $\ovl{F}$ and thus, $p$ (or $1/y$, respectively) determines a
unique extension of $\zeta$ to $\ovl{F}(y)$ which is trivial on
$\ovl{F}$. We set $\iota_{\ovl{F}|R}(\zeta):=\zeta_{\ovl{F}}\,$.

\begin{lemma}                               \label{cec}
The mapping $\iota_{\ovl{F}|R}:\>M(R(y))\,\rightarrow\,M(\ovl{F}(y))$ is
a continous embedding compatible with the restriction. If $\ovl{F}$ is
real closed, then it is a homeomorphism.
\end{lemma}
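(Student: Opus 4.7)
The plan proceeds in four steps. First, I would verify that $\iota_{\ovl{F}|R}$ is well defined. Since $R$ is archimedean real closed, its unique $\R$-place is the inclusion $R\hookrightarrow\R$, so any $\zeta\in M(R(y))$ is completely determined by $c:=\zeta(y)\in\R\cup\{\infty\}$. Any $\R$-place of $\ovl{F}(y)$ trivial on $\ovl{F}$ that restricts to $\zeta$ on $R(y)$ must send $y$ to $c$, and conversely evaluation at $c$, combined with the inclusion $\ovl{F}\subseteq\R$, does produce such a place. Thus $\zeta_{\ovl{F}}$ exists and is unique. Compatibility with restriction, $\res\circ\iota_{\ovl{F}|R}=\mathrm{id}$, is built in, and this forces injectivity of $\iota_{\ovl{F}|R}$.

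Second, I would check continuity using the subbasis of $H'(b)$-sets for $b\in\ovl{F}(y)$. Since $\zeta_{\ovl{F}}$ is the evaluation $y\mapsto c$ extended trivially on $\ovl{F}$, we have $\zeta_{\ovl{F}}(b)=b(c)\in\R\cup\{\infty\}$, so
\[
\iota_{\ovl{F}|R}^{-1}(H'(b))=\{\zeta\in M(R(y))\mid\zeta(y)\in V_b\},\quad V_b:=\{c\in\R\cup\{\infty\}\mid 0<b(c)<\infty\},
\]
where $V_b$ is open in the standard circle topology on $\R\cup\{\infty\}$ since $b$ is a continuous rational function on the circle.

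Third, I would show that any set of the form $\{\zeta\in M(R(y))\mid\zeta(y)\in V\}$, with $V$ open in the circle, is open in $M(R(y))$. Using density of $R$ in $\R$, $V$ decomposes into a union of open intervals $(r_1,r_2)$ with $r_1,r_2\in R$, together, if $\infty\in V$, with a neighborhood of $\infty$ of the form $\{c\in\R\mid|c|>M\}\cup\{\infty\}$ for some $M\in R$. The first type pulls back to $H'(y-r_1)\cap H'(r_2-y)$, and the second to $H'((y^2-M^2)/(y^2+1))$; both are open in $M(R(y))$, so $\iota_{\ovl{F}|R}^{-1}(H'(b))$ is open.

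Finally, if $\ovl{F}$ is real closed then, being archimedean, it has a unique $\R$-place, the inclusion into $\R$. Every $\xi\in M(\ovl{F}(y))$ therefore restricts on $\ovl{F}$ to this inclusion, is trivial on $\ovl{F}$, and must be the constant extension of $\xi|_{R(y)}$. Hence $\iota_{\ovl{F}|R}$ is surjective, and as a continuous bijection between compact Hausdorff spaces it is a homeomorphism. The main technical hurdle is realizing open neighborhoods of the point at infinity by subbasic $H'$-sets; the choice $(y^2-M^2)/(y^2+1)$, which lies in the real holomorphy ring, takes value $1>0$ at $\infty$, and has sign in $\R$ matching $|c|>M$, handles this cleanly.
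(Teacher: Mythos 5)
Your proof is correct, but it takes a genuinely different and more self-contained route than the paper's. The paper's argument is very short: it invokes Theorem~3.2 of \cite{kmo}, which says that when $R$ lies dense in a real closed $\ovl{F}$ the restriction $M(\ovl{F}(y))\to M(R(y))$ is a homeomorphism; the constant-extension map is then simply its inverse, and for non-real-closed $\ovl{F}$ one factors through a real closure $R'$ and composes $\iota_{R'|R}$ with the continuous restriction $\res_{R'(y)|\ovl{F}(y)}$. You instead avoid the citation and essentially reprove its content: you identify $M(R(y))$ with the circle $\R\cup\{\infty\}$ via $\zeta\mapsto\zeta(y)$, observe that $\zeta_{\ovl{F}}$ is evaluation at $c=\zeta(y)$ so that $\zeta_{\ovl{F}}(b)=b(c)$, and then show that the preimage of a subbasic set $H'(b)$ is the preimage under $\zeta\mapsto\zeta(y)$ of the open set $\{c: 0<b(c)<\infty\}$, which you cover explicitly by $H'$-sets of $R(y)$ using density of $R$ in $\R$ and the holomorphy-ring trick $(y^2-M^2)/(y^2+1)$ for neighborhoods of $\infty$. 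What the paper buys by citing is brevity; what you buy by working directly is transparency about which topology-level facts are actually used, and independence from the external reference. Two small things worth making explicit if you write this up fully: the fact that $\zeta\in M(R(y))$ (trivial on $R$) is determined by $c=\zeta(y)\in\R\cup\{\infty\}$ deserves a line of justification (trivial vs.\ nontrivial valuation on $R(y)$ is distinguished by whether $c\in R\cup\{\infty\}$, and in each case the place is unique), and the claim $\zeta_{\ovl{F}}(b)=b(c)$ requires a short case check (in particular when $\ovl{F}$ is not real closed and $c$ is algebraic over $\ovl{F}$ but not in $\ovl{F}$, where the constant extension is centered at the minimal polynomial of $c$ over $\ovl{F}$ rather than being an embedding). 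Both checks go through, so there is no gap, just points to flesh out.
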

\begin{proof}
Since $\ovl{F}|R$ is an extension of archimedean ordered fields, $R$
lies dense in $\ovl{F}$. It follows from \cite[Theorem~3.2]{kmo}
that the restriction mapping from $M(\ovl{F}(y))$ to $M(R(y))$ is a
homeomorphism if $\ovl{F}$ is real closed. Hence in this case,
$\iota_{\ovl{F}|R}$ is a homeomorphism.

If $\ovl{F}$ is not real closed, then we consider a real closure $R'$ of
$\ovl{F}$. By what we have shown already, $\iota_{R'|R}$ is a
homeomorphism. Since $\res_{R'(y)|R(y)}$ is continuous, the same holds
for $\iota_{\ovl{F}|R}=\res_{R'(y)|\ovl{F}(y)}\circ \iota_{R'|R}$.
\end{proof}

Now we define
\begin{equation}                            \label{emb}
\iota(\zeta)\>:=\>\zeta_{\ovl{F}}\circ\xi_y\>.
\end{equation}

\begin{theorem}
The mapping $\iota:\>M(R(y))\,\rightarrow\,M(F(y))$ is a continous
embedding.
\end{theorem}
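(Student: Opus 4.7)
The plan is to verify well-definedness, injectivity, and continuity of $\iota$ in turn. Well-definedness is immediate: $\xi_y\colon F(y)\to\ovl{F}(y)\cup\{\infty\}$ is a place with residue field $\ovl{F}(y)$, and $\zeta_{\ovl{F}}$ is an $\R$-place of $\ovl{F}(y)$, so the composite $\zeta_{\ovl{F}}\circ\xi_y$ is an $\R$-place of $F(y)$.

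For injectivity I would observe that $\iota$ is compatible with the restriction $M(F(y))\to M(R(y))$. Since $\xi_y$ is the constant extension of $\xi$ to $F(y)$, it is trivial on $R(y)$, so $\xi_y|_{R(y)}=\mathrm{id}_{R(y)}$; and $\zeta_{\ovl{F}}$ is by construction an extension of $\zeta$. Hence
\[
\iota(\zeta)|_{R(y)}\>=\>(\zeta_{\ovl{F}}\circ\xi_y)|_{R(y)}\>=\>\zeta_{\ovl{F}}|_{R(y)}\>=\>\zeta\>,
\]
so any equality $\iota(\zeta_1)=\iota(\zeta_2)$ restricts to $\zeta_1=\zeta_2$.

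For continuity I would factor $\iota=\Phi\circ\iota_{\ovl{F}|R}$, where $\Phi\colon M(\ovl{F}(y))\to M(F(y))$ sends $\eta\mapsto\eta\circ\xi_y$. Lemma~\ref{cec} already supplies the continuity of $\iota_{\ovl{F}|R}$, so it suffices to prove that $\Phi$ is continuous. Fix $a$ in the real holomorphy ring of $F(y)$; because $\xi_y\in M(F(y))$, we have $\xi_y(a)\in\ovl{F}(y)$. The key point is that $\xi_y(a)$ lies in the real holomorphy ring of $\ovl{F}(y)$: for any $\eta\in M(\ovl{F}(y))$ the composite $\eta\circ\xi_y$ is an $\R$-place of $F(y)$, so $\eta(\xi_y(a))=(\eta\circ\xi_y)(a)\ne\infty$. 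Consequently
\[
\Phi^{-1}(U(a))\>=\>\{\eta\in M(\ovl{F}(y))\mid \eta(\xi_y(a))>0\}\>=\>U(\xi_y(a))\>,
\]
a subbasic open set in $M(\ovl{F}(y))$. This yields continuity of $\Phi$, and hence of $\iota$.

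The only technical subtlety I anticipate is precisely the real-holomorphy-ring verification above: one must check that $\xi_y$ carries the real holomorphy ring of $F(y)$ into that of $\ovl{F}(y)$, so that the subbasic sets $U(a)$ pull back to subbasic sets rather than to ill-defined objects. Everything else is routine bookkeeping about composition of places and the already-established properties of $\iota_{\ovl{F}|R}$ and of the quotient topology on $M(\cdot)$.
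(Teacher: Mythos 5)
Your argument is essentially the paper's own, rearranged: you factor $\iota=\Phi\circ\iota_{\ovl{F}|R}$ with $\Phi(\eta)=\eta\circ\xi_y$, reduce continuity of $\iota$ to continuity of $\Phi$ via Lemma~\ref{cec}, and then compute preimages of subbasic sets. The paper does the same reduction implicitly, computing $\iota^{-1}(H'(a))$ directly as $\iota_{\ovl{F}|R}^{-1}(H'(\xi_y(a)))$ (or $\emptyset$ if $\xi_y(a)\in\{0,\infty\}$) and quoting Lemma~\ref{cec}. The one substantive difference is your choice of subbasis: you work with $U(a)$ for $a$ in the real holomorphy ring, which forces you to verify that $\xi_y$ carries the real holomorphy ring of $F(y)$ into that of $\ovl{F}(y)$; the paper works with $H'(b)$ for arbitrary $b\in F(y)$, which sidesteps that check entirely and replaces it with a trivial case split on whether $\xi_y(a)\in\{0,\infty\}$. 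Your observation that injectivity follows from compatibility with restriction is correct and worth stating (the paper leaves it implicit), and since $M(R(y))$ is compact and $M(F(y))$ Hausdorff, a continuous injection is automatically a topological embedding.

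One slip needs repair: you justify $\xi_y(a)\in\ovl{F}(y)$ by writing ``because $\xi_y\in M(F(y))$''. That is false: $\xi_y$ has residue field $\ovl{F}(y)$, which is not a subfield of $\R$, so $\xi_y$ is not an $\R$-place. The conclusion is nevertheless correct, and in fact it follows from the very argument you give in your next sentence: $\ovl{F}(y)$ is formally real, so $M(\ovl{F}(y))\ne\emptyset$; pick any $\eta\in M(\ovl{F}(y))$, then $\eta\circ\xi_y\in M(F(y))$ and $a$ in the real holomorphy ring gives $\eta(\xi_y(a))=(\eta\circ\xi_y)(a)\ne\infty$, which forces $\xi_y(a)\ne\infty$. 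So simply delete the false claim that $\xi_y\in M(F(y))$ and derive $\xi_y(a)\ne\infty$ as a byproduct of the holomorphy-ring verification you already perform; the rest of the proof stands.
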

\begin{proof}
Take $a\in F(y)$. We have to show that the preimage of a subbasis set
$H'(a)$ under $\iota$ is open in $M(R(y))$. If $\xi_y(a)$ is 0 or
$\infty$, then the same holds for $\zeta_{\ovl{F}}\circ\xi_y$ for every
$\zeta\in M(R(y))$. In this case, $H'(a)$ is empty and we are done.

Assume now that $\xi_y(a)\ne 0,\infty$. Then $\xi_y(a)$ is a nonzero
rational function $g(y)\in \ovl{F}(y)$. The preimage of $H'(a)$ is then
the set of all real places $\zeta\in M(R(y))$ such that $\zeta_{\ovl{F}}
(g)>0$. In the case of $\ovl{F}=R$ (which for instance holds when
$R=\R$), this is precisely $H'(g)$ in $M(R(y))$. For the general case,
we apply Lemma~\ref{cec} to conclude that the preimage of $H'(g)$ under
the constant extension mapping $\zeta\mapsto\zeta_{\ovl{F}}\,$, and
hence the preimage of $H'(a)$ under $\iota$, is open.
\end{proof}

From Theorem~\ref{uniqnotrc}, where we take $F=R(x)$, we now obtain:

\begin{theorem}                              \label{thembMrc}
The mapping $\iota$ defined in (\ref{emb}) is the unique continuous
embedding of $M(R(y))$ in $M(R(x,y))$ that is compatible with
restriction and such that all places in the image of $\iota$ have
restriction $\xi$ to $R(x)$.
\end{theorem}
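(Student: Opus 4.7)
The plan is to reduce the theorem to Theorem~\ref{uniqnotrc} (applied with $F=R(x)$), combined with the uniqueness assertion in Theorem~\ref{thembM}, via a lifting argument through a real closure $R'$ of $R(x)$.

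Concretely, fix any ordering $P$ of $R(x)$ with $\lambda_{R(x)}(P)=\xi$ and let $R'$ be the real closure of $(R(x),P)$. Since $R$ is archimedean, $vR=\{0\}$ is convex in $vR'$, so Theorem~\ref{thembM} yields a unique continuous embedding $\iota'_P\colon M(R(y))\to M(R'(y))$ compatible with restriction. Set $\iota_\xi:=\res_{R'(y)|R(x,y)}\circ\iota'_P$; this is a continuous embedding of $M(R(y))$ into $M(R(x,y))$ compatible with restriction whose image restricts to $\xi$ on $R(x)$, because $\iota'_P(\zeta)|_{R'}$ is the unique $\R$-place of $R'$ and it restricts to $\xi$ on $R(x)$. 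By Theorem~\ref{uniqnotrc}, $\iota_\xi$ does not depend on the choice of $P$. The next step is to verify $\iota_\xi=\iota$ for the $\iota$ of (\ref{emb}); since both maps are continuous, $M(R(x,y))$ is Hausdorff, and principal places are dense in $M(R(y))$ by Lemma~\ref{ppdens}, it suffices to compare them on a principal $\zeta=\lambda_{R(y)}\circ\chi_R(a^\pm)$. A short valuation-theoretic computation shows that both $\iota(\zeta)=\zeta_{\ovl F}\circ\xi_y$ and $\iota_\xi(\zeta)=\res(\iota'_P(\zeta))$ are the same rank-$2$ place on $R(x,y)$, whose valuation has value group $v_\xi R(x)\oplus\Z$ in lex order with top generator $v(y-a)$ positive and infinitely smaller than every positive element of $v_\xi R(x)$.

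For the uniqueness, let $\iota^*\colon M(R(y))\to M(R(x,y))$ be an arbitrary continuous embedding compatible with restriction whose image restricts to $\xi$ on $R(x)$. The goal is to construct a continuous lift $\tilde\iota^*\colon M(R(y))\to M(R'(y))$ satisfying $\res_{R'(y)|R(x,y)}\circ\tilde\iota^*=\iota^*$; such a lift is then automatically compatible with restriction to $R(y)$ and is an embedding, so the uniqueness of $\iota'_P$ in Theorem~\ref{thembM} forces $\tilde\iota^*=\iota'_P$, whence $\iota^*=\res\circ\iota'_P=\iota_\xi=\iota$. Since $R'$ is real closed, it has a unique $\R$-place $\xi'$ (which restricts to $\xi$ on $R(x)$), so every $\eta\in M(R'(y))$ satisfies $\eta|_{R'}=\xi'$; restriction gives a continuous surjection from the compact space $M(R'(y))$ onto the closed (hence compact Hausdorff) subspace $\{\eta'\in M(R(x,y)):\eta'|_{R(x)}=\xi\}$, which contains the image of $\iota^*$, and the lift is then $\tilde\iota^*:=\res^{-1}\circ\iota^*$ once injectivity of $\res$ is established.

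The main obstacle is showing injectivity of this restriction map, which upgrades the continuous surjection to a homeomorphism between compact Hausdorff spaces and thereby provides the continuous inverse $\res^{-1}$. Suppose $\eta_1,\eta_2\in M(R'(y))$ with $\eta_1|_{R(x,y)}=\eta_2|_{R(x,y)}=\eta'$, and pick orderings $Q_i\in\lambda^{-1}_{R'(y)}(\eta_i)$ corresponding via $\chi_{R'}$ to cuts $C'_i$ in $R'$. The restrictions $Q_i|_{R(x,y)}$ lie in $\lambda^{-1}_{R(x,y)}(\eta')$ and extend $P$; because $R'$ is the real closure of $(R(x),P)$, each such ordering of $R(x,y)$ has a unique extension to $R'(y)$, and by the linear disjointness of $R(x,y)$ and $R'$ over $R(x)$ the two cuts $C'_1,C'_2$ must induce the same valuation on $R'(y)$. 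Hence $C'_1$ and $C'_2$ correspond to the same ball in $R'$; by Theorem~\ref{glue} they yield the same $\R$-place, so $\eta_1=\eta_2$.
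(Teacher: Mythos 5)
Your overall strategy matches what the paper indicates: derive the theorem from Theorems~\ref{thembM} and~\ref{uniqnotrc} by passing through the real closure $R'$ of $(R(x),P)$. The paper itself is extremely terse here, stating only ``From Theorem~\ref{uniqnotrc}, where we take $F=R(x)$, we now obtain\dots'', so you are filling in details the paper omits. Your first half (that $\iota_\xi=\iota$, via density of principal places together with a rank-$2$ valuation computation) is in line with the paper's own discussion that follows the theorem statement.

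The difficulty is in the second half, the uniqueness against an arbitrary $\iota^*$. Your lifting argument hinges on the claim that $\res\colon M(R'(y))\to\{\eta'\in M(R(x,y)):\eta'|_{R(x)}=\xi\}$ is a bijection, and both parts of that claim are problematic. Surjectivity in fact \emph{fails}: take $R=\R$, $\xi$ the place of $\R(x)$ at $0$, $P$ the ordering with $x>0$, and let $\eta'$ be the place of $\R(x,y)$ given by $\eta'(f)=\lim_{t\to0^+}f(-t^2,\,t+t^2)$. Then $\eta'|_{\R(x)}=\xi$, but $\eta'(y^2/x)=-1<0$, whereas $y^2/x=(y/\sqrt{x})^2$ is a square in $R'(y)$; so no $\R$-place of $R'(y)$ can restrict to $\eta'$, i.e.\ $\eta'\notin\im(\res)$. (Equivalently: here $v_\xi\R(x)\subset 2v_{\eta'}\R(x,y)$, so all orderings of $\R(x,y)$ over $\eta'$ restrict to the \emph{same} ordering of $\R(x)$, and that ordering is $x<0$, not $P$.) Your injectivity argument --- ``by the linear disjointness of $R(x,y)$ and $R'$ over $R(x)$ the two cuts $C'_1,C'_2$ must induce the same valuation on $R'(y)$'' --- is an assertion, not a proof: linear disjointness does not in general guarantee that a valuation on a compositum is determined by its restrictions to the two factors, and you need a real argument (most plausibly again via the unique compatible extension of a valuation along an extension of ordered fields) to settle whether two orderings of $R(x,y)$ over $\eta'$ lying above $P$ yield the same canonical valuation on $R'(y)$. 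Without both of these, the lift $\tilde\iota^*=\res^{-1}\circ\iota^*$ is not established as a well-defined continuous embedding, so invoking the uniqueness in Theorem~\ref{thembM} is premature. To salvage the argument one must first use the continuity of $\iota^*$ to constrain the value-group structure of $v_{\iota^*(\zeta)}$ (so as to restrict attention to places that actually \emph{do} lift) --- this is precisely the content of the Proposition that follows the theorem in the paper, and it is a different kind of argument than the abstract compact--Hausdorff lift you propose.
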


We have chosen to give a direct proof of Theorem~\ref{thembMrc} although
it can be derived from the theorems of the last section. In order to do
this, we have to show that the embedding defined in (\ref{emb})
coincides with the embedding we have constructed before. To this end, we
consider an ordering $P$ of $R(y)$ and the cut $C$ it induces in the
archimedean real closed field $R$. If $R=\R$, then the only
possibilities are $C= C_{\infty}$, $C= C_{-\infty}$, or $C=r^+,r^-$ for
$r\in\R$. If $R\ne\R$, $C$ can also be a cut induced in $R$ by some real
number $r\in\R \setminus R$.

If $C= C_{\infty}$ or $C= C_{-\infty}$, we have that $y>F$ or $y<F$
under the corresponding orderings. In this case, $0<vy^{-1}<vF^{>0}$,
where $vF^{>0}$ denotes the set of positive elements of $vF$.

In the case of $C=r^+,r^-$, we have that $\iota(C)$ is the upper or
lower edge of $B_{vF^{>0}}(r,F)$. This ball is $r+{\cal M}$ where
${\cal M}$ is the valuation ideal of infinitesimals in $F$. Since $C$ is
induced by $y$, we find that $0<v(y-r)<vF^{>0}$.

In the final case, we have two subcases. If $C$ is not filled in $F$,
then $v(y-f)\leq 0$ for every $f\in F$. If $C$ is filled by some
element in $F$, then we can identify this element with the real number
$r$ that fills the cut $C$. In this case, we obtain the same result as
in the previous case.

In all three cases, we find the constant extension $\xi_y$ of $\xi$ must
be trivial on $R(y)$, which implies that $\iota(\zeta)$ must be of the
form $\zeta_{\ovl{F}}\circ\xi_y$.

\pars
In the case of $R=\R$, we can show the above more directly:
\begin{proposition}
Take $\iota$ to be an embedding of $M(\R(y))$ in $M(\R(x,y))$,
compatible with restriction and such that all places in the image of
$\iota$ have the same restriction to $\R(x)$. If for some
$\xi\in\im(\iota)$ such that $\xi(x)=a$ and $\xi(y)=b$ we have that for
some $n\in\N$,
\[
0\><\>v_{\xi}(x-a)\>< n v_{\xi}(y-b)\>,
\]
then the embedding is not continuous. The same holds if $\xi(x)=\infty$
and $x-a$ is replaced by $1/x$ and/or $\xi(y)=\infty$ and $y-b$ is
replaced by $1/y$.
\end{proposition}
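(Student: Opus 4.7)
The plan is to derive a contradiction with the uniqueness statement in Theorem~\ref{thembMrc}. Suppose, for contradiction, that $\iota$ is continuous, and set $\xi_0:=\xi|_{\R(x)}$; by hypothesis every place in $\im(\iota)$ restricts to $\xi_0$ on $\R(x)$. Then $\iota$ satisfies all the hypotheses of Theorem~\ref{thembMrc} and must coincide with the natural embedding defined in~(\ref{emb}), namely $\iota(\zeta)=\zeta_\R\circ(\xi_0)_y$ for every $\zeta\in M(\R(y))$.

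It will then suffice to show that, for this natural embedding, the inequality postulated in the hypothesis fails. Setting $\zeta:=\xi|_{\R(y)}\in M(\R(y))$, so that $\xi=\zeta\circ(\xi_0)_y$, the strategy is to analyze the value group of this composite place. By construction, the Gau{\ss} extension $(\xi_0)_y$ is trivial on $\R(y)$, with value group equal to $v_{\xi_0}(\R(x))=\mathbb{Z}\alpha$, where $\alpha:=v_{\xi_0}(x-a)>0$, and residue field $\R(y)$. In the composite valuation $v_\xi$, the group $v_\zeta(\R(y))$ therefore sits as a convex subgroup with quotient $\mathbb{Z}\alpha$. Since $(\xi_0)_y$ fixes $y-b\in\R(y)^\times$, it follows that $v_\xi(y-b)=v_\zeta(y-b)$ lies in this convex subgroup; on the other hand, $v_\xi$ extends $v_{\xi_0}$ on $\R(x)$, so $v_\xi(x-a)=\alpha$ projects nontrivially to $\mathbb{Z}\alpha$ and hence does not lie in the convex subgroup. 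Convexity then forces $n\cdot v_\xi(y-b)<v_\xi(x-a)$ for every $n\in\N$, contradicting the hypothesis.

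The remaining cases, where $\xi(x)=\infty$ or $\xi(y)=\infty$, are handled by the identical argument after substituting $1/x$ for $x-a$ or $1/y$ for $y-b$: the Gau{\ss} extension of $\xi_0$ is still trivial on $\R(y)$, and the same analysis places $v_\xi(1/y)$ in the convex subgroup while $v_\xi(1/x)$ remains outside it. The main obstacle in carrying this out carefully is the bookkeeping for the composite place $\zeta\circ(\xi_0)_y$, specifically the identification of $v_\zeta(\R(y))$ as the convex kernel of the natural projection and the verification that $v_\xi(x-a)$ sits outside this kernel; once this standard structure of the value group of a composite valuation is in hand, the closing convexity argument is immediate.
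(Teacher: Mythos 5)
Your proof is correct, but it takes a genuinely different route from the paper's, and one that is somewhat in tension with the purpose of the proposition. The paper's argument is direct and self-contained: it exhibits the explicit rational function $f(x,y)=\frac{x-a+(y-b)^n}{x-a}$, observes that $\xi(f)=1$ while every other $\xi'=\iota(\zeta')\in\im(\iota)$ has $\xi'(x)=a$ and $\xi'(y)\neq b$ and hence $\xi'(f)=\infty$, so that $H'(f)\cap\im(\iota)=\{\xi\}$; the preimage under $\iota$ is thus a singleton in $M(\R(y))$ (the circle), which is not open. Your argument instead invokes the uniqueness statement of Theorem~\ref{thembMrc} to identify a putative continuous $\iota$ with the composite $\zeta\mapsto\zeta\circ(\xi_0)_y$, and then reads off the contradiction from the structure of the value group of the composite place (where $v_\zeta(\R(y)^\times)$ is a proper convex subgroup, forcing $n\,v_\xi(y-b)<v_\xi(x-a)$ for all $n$). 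This is logically valid --- Theorem~\ref{thembMrc} is established independently of the proposition via Theorem~\ref{uniqnotrc} --- but it runs against the spirit of the paper, which introduces the proposition precisely as a ``more direct'' way to see the discontinuity in the case $R=\R$, without appealing to the general uniqueness machinery. The paper's approach also buys something concrete: an explicit subbasic open set $H'(f)$ witnessing the failure of continuity, whereas your reduction to Theorem~\ref{thembMrc} is an abstract rerouting. (Two small notational points: the value groups should be written $v_{\xi_0}(\R(x)^\times)$ and $v_\zeta(\R(y)^\times)$, and $v_\xi(x-a)$ is not literally $\alpha$ but an element of $\Gamma_{v_\xi}$ whose image in the quotient $\mathbb{Z}\alpha$ is $\alpha$; your convexity conclusion is unaffected.)
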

\begin{proof}
Take
\[
f(x,y)\>=\>\frac{x-a+(y-b)^n}{x-a}\>.
\]
Then $H'(f)\cap \im(\iota)$ is the singleton $\{\xi\}$. Indeed, $\xi\in
H'(f)$ since $\xi(f)=1$. But if $\xi'=\iota(\zeta)\ne\xi$, then
$\zeta(y)\ne b$, whence $\xi'(f)=\infty$. The cases of $\xi(x)=\infty$
and/or $\xi(y)=\infty$ are similar.
\end{proof}

It is possible to generalize the approach of this section to the
general setting of the previous section by replacing the $\R$-place
$\xi$ of $F$ by the finest coarsening $\xi'$ whose residue field
contains $R$. (The valuation ring of $\xi'$ is the compositum of the
valuation ring of $\xi$ and the subfield $R$ of $F$.) But we would need
an analogue of Lemma~\ref{cec} for the case of non-archimedean fields
$R$ and $\ovl{F}=\xi'(F)$. We found that the tools developed to deal
with this analogue can be directly applied to construct the embedding of
$M(R(y))$ in $M(F(y))$ in the setting of the previous section.

%
%
%
%
\section{Embeddings of $\prod_{i=1}^{n} M(\mathbb R(x_i))$ in
$M(\mathbb R(x_1,\dots,x_n))$}                        \label{embtor}
In order to study possible embeddings of the torus in spaces of real
places, we wish to consider embeddings of $M(\mathbb R(x))\times
M(\mathbb R(y))$ in $M(\mathbb R(x,y))$. Initially, we will treat the
more general case of $n$ variables. We consider the projection mapping
\[
\rho: M(\mathbb R(x_1,\dots,x_n))\ni\xi\>\mapsto\>
(\xi|_{\mathbb R(x_1)},\ldots,\xi|_{\mathbb R(x_n)})\in
\prod_{i=1}^{n} M(\mathbb R(x_i)) \>.
\]

\begin{lemma}
The mapping $\rho$ is surjective.
\end{lemma}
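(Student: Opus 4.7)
The plan is to proceed by induction on $n$. The base case $n=1$ is trivial since $\rho$ is then the identity. For the inductive step, assume the claim holds for $n-1$ variables, and take an arbitrary tuple $(\zeta_1,\dots,\zeta_n) \in \prod_{i=1}^{n} M(\mathbb R(x_i))$. Since $\mathbb R$ is archimedean, each $\zeta_i$ is trivial on $\mathbb R$ and is uniquely determined by the value $\zeta_i(x_i) \in \mathbb R \cup \{\infty\}$.

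Set $K := \mathbb R(x_2,\dots,x_n)$, so that $\mathbb R(x_1,\dots,x_n) = K(x_1)$. Following the construction in Section~\ref{embMrc}, I would form the constant extension $\tilde\zeta_1$ of $\zeta_1$ to $K(x_1)$, namely the unique place of $K(x_1)$ extending $\zeta_1$ and trivial on $K$. Its existence follows from the fact that the linear polynomial $x_1 - \zeta_1(x_1)$ (or $1/x_1$ if $\zeta_1(x_1)=\infty$) defining $\zeta_1$ remains irreducible when regarded over $K$, since $K|\mathbb R$ is purely transcendental and hence regular. The residue field of $\tilde\zeta_1$ is $K$. By the inductive hypothesis applied to the tuple $(\zeta_2,\dots,\zeta_n)$, there exists $\eta \in M(K)$ with $\eta|_{\mathbb R(x_i)} = \zeta_i$ for $i=2,\dots,n$.

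Now define $\xi := \eta \circ \tilde\zeta_1$. Since $\tilde\zeta_1$ has image in $K \cup \{\infty\}$ and $\eta$ has image in $\mathbb R \cup \{\infty\}$, the composition $\xi$ is a place of $\mathbb R(x_1,\dots,x_n)$ whose image lies in $\mathbb R \cup \{\infty\}$, so $\xi \in M(\mathbb R(x_1,\dots,x_n))$. Checking $\rho(\xi) = (\zeta_1,\dots,\zeta_n)$ is now routine: for $i=1$, $\tilde\zeta_1|_{\mathbb R(x_1)} = \zeta_1$ and $\eta$ is trivial on $\mathbb R \subseteq K$, hence $\xi|_{\mathbb R(x_1)} = \zeta_1$; for $i \geq 2$, $\tilde\zeta_1$ is trivial on $\mathbb R(x_i) \subseteq K$, hence $\xi|_{\mathbb R(x_i)} = \eta|_{\mathbb R(x_i)} = \zeta_i$. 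The one point requiring care is the existence of the constant extension $\tilde\zeta_1$ with residue field exactly $K$; this is the analogue of the construction preceding Lemma~\ref{cec} with $K$ in place of $\ovl{F}$, and it goes through precisely because $K|\mathbb R$ is purely transcendental.
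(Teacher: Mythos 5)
Your proof is correct and takes essentially the same approach as the paper: the paper directly writes down the composition $\xi = \xi_n \circ \xi'_{n-1} \circ \cdots \circ \xi'_1$ of constant extensions, where $\xi'_i$ is the place of $\mathbb R(x_i,\ldots,x_n)$ trivial on $\mathbb R(x_{i+1},\ldots,x_n)$ with $\xi'_i(x_i)=\xi_i(x_i)$, and your inductive argument, unrolled, produces exactly this place. The only stylistic difference is that you package the stacking as an induction rather than an explicit iterated composition.
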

We describe a general construction that will prove the lemma. Take
$\R$-places $\xi_i\in M(\R(x_i))$. We wish to associate to them an
$\R$-place $\xi$ of $\R(x_1,\dots,x_n)$ whose restriction to $\R(x_i)$
is $\xi_i\,$. We may assume that $\xi_i (x_i)\ne\infty$; otherwise, we
can replace $x_i$ by $1/x_i$. For $1\leq i<n$, let $\xi'_i$ be the place
of $\R(x_i,\ldots,x_n)$ which is trivial on $\R(x_{i+1},\ldots, x_n)$
and such that $\xi'_i(x_i)=\xi_i (x_i)$. Its residue field is
$\R(x_{i+1},\ldots, x_n)$. Then the place
\begin{equation}                            \label{stack}
\xi\>=\>\xi_n\circ\xi'_{n-1}\circ\ldots\circ\xi'_1\>.
\end{equation}
satisfies the above conditions. This construction can be
replaced by the symmetric ones where the $x_i$ are permuted.

\begin{remark}                              \label{many}
There are many more possibilities for choosing a common extension $\xi$
of the $\xi_i\,$. Set $\xi_i (x_i)=a_i\,$. Choose any rationally
independent elements $r_1,\ldots,r_n\in\R$. Then there is a (uniquely
determined) $\R$-place $\xi$ of $\R(x_1,\dots,x_n)$ such that for the
valuation $v$ associated with $\xi$ we have that $v(x_i-a_i)=r_i\,$. The
value group of $\xi$ is generated by the values $r_1,\ldots,r_n$ and is
thus archimedean. In contrast to this, the value group of the place in
(\ref{stack}) has rank $n$ and is thus not archimedean if $n>1$.
\end{remark}

\pars
The surjectivity shows that there exist embeddings
\begin{equation}                            \label{embprod}
\iota:\> \prod_{i=1}^{n} M(\mathbb R(x_i)) \>\hookrightarrow\>
M(\mathbb R(x_1,\dots,x_n))\>.
\end{equation}
Such an embedding will be called {\bf compatible} if $\rho\circ\iota$ is
the identity.

\begin{theorem}                             \label{cd}
The image of every compatible embedding $\iota$ as in (\ref{embprod})
lies dense in $M(\mathbb R(x_1,\dots,x_n))$. But for $n>1$, every
non-empty basic open subset of $M(\mathbb R(x_1,\dots,x_n))$ contains
infinitely many places that are not in the image of $\iota$.
\end{theorem}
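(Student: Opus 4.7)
Both assertions will follow from a single construction. Write the given basic open set as $U=H'(f_1)\cap\cdots\cap H'(f_m)$ with $f_j=p_j/q_j\in\R(x_1,\dots,x_n)$. The plan is to find a tuple $(a_1,\dots,a_n)\in\R^n$ at which every $q_j$ is nonzero and every $f_j$ is a positive real, and then to show that the \emph{entire} fiber $\rho^{-1}(\xi_{a_1},\dots,\xi_{a_n})$ of the restriction map lies inside $U$, where $\xi_{a_i}\in M(\R(x_i))$ denotes the evaluation place sending $x_i$ to $a_i$. Density will be immediate, and the second assertion will reduce to showing that this fiber is infinite when $n>1$.

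\textbf{Density.} Non-emptiness of $U$ yields an ordering $P$ of $\R(x_1,\dots,x_n)$ with $\lambda(P)\in U$, so each $f_j>0$ in $P$. In a real closure $R'$ of $(\R(x_1,\dots,x_n),P)$, the tuple $(x_1,\dots,x_n)$ itself witnesses the existential sentence
\[
\exists a_1\cdots\exists a_n \>\bigwedge_{j=1}^{m}\bigl(q_j(a_1,\dots,a_n)\neq 0 \>\wedge\> f_j(a_1,\dots,a_n)>0\bigr),
\]
so by Tarski--Seidenberg (both $\R$ and $R'$ being real closed) the same sentence is satisfied over $\R$, producing the desired $(a_1,\dots,a_n)\in\R^n$. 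For any $\xi\in\rho^{-1}(\xi_{a_1},\dots,\xi_{a_n})$ one has $\xi(x_i)=a_i\in\R$; hence $\xi$ is defined on $\R[x_1,\dots,x_n]$ by substitution, $\xi(q_j)=q_j(a_1,\dots,a_n)\neq 0$, and $\xi(f_j)=f_j(a_1,\dots,a_n)\in\R_{>0}$ for every $j$. Thus the whole fiber is contained in $U$, and in particular $\iota(\xi_{a_1},\dots,\xi_{a_n})\in U$.

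\textbf{Infinitely many missed places.} Because $\iota$ is a section of $\rho$, at most one element of the fiber $\rho^{-1}(\xi_{a_1},\dots,\xi_{a_n})$ can lie in $\im(\iota)$, so it suffices to show this fiber is infinite when $n>1$. Apply Remark~\ref{many} at $(a_1,\dots,a_n)$: for each tuple $r_1,\dots,r_n$ of rationally independent positive reals there is a unique $\R$-place $\xi$ in the fiber with $v_\xi(x_i-a_i)=r_i$, and its valuation ring depends only on the ray $\R_{>0}\cdot(r_1,\dots,r_n)$; distinct such rays give distinct valuation rings, hence distinct $\R$-places (for instance, some integer monomial in the $x_i-a_i$ has opposite-sign valuations under two non-proportional rationally independent tuples). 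For $n\geq 2$ there are uncountably many such rays, producing uncountably many pairwise distinct places in the fiber, all of which lie in $U\setminus\im(\iota)$ apart from the single $\iota$-image.

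\textbf{Main obstacle.} The only delicate step is the production of the real witness $(a_1,\dots,a_n)$: one cannot obtain it by naively projecting some known $\xi_0\in U$, because the restrictions $\xi_0|_{\R(x_i)}$ may include the place at infinity. Tarski--Seidenberg handles this uniformly by converting the geometric question ``does a real point exist?'' into a first-order existential statement, whose truth is preserved under passage between real closed fields.
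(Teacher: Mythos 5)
Your proof is correct and follows essentially the same route as the paper: use Tarski transfer to produce a real witness $(a_1,\dots,a_n)$ for the system of inequalities defining $U$, observe that every place sending $x_i\mapsto a_i$ lands in $U$ (in particular the $\iota$-image of the evaluation places does, giving density), and then invoke Remark~\ref{many} to see that the fiber over $(\xi_{a_1},\dots,\xi_{a_n})$ is infinite while at most one of its members can lie in $\im(\iota)$. The only differences from the paper are presentational — you route the Tarski step through an explicit real closure rather than arguing directly in the ordered field $\R(x_1,\dots,x_n)$, and you flesh out why distinct rays of rationally independent tuples give distinct places (a detail the paper leaves to Remark~\ref{many}).
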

\begin{proof}
Take non-zero elements $f_1,\ldots,f_m\in \R(x_1,\dots,x_n)$ such that
\[
U:= H'(f_1)\cap\ldots\cap H'(f_m)\>\ne\>\emptyset\>.
\]
Take $\zeta\in U$ and write $f_i(x_1,\dots,x_n)=
\frac{g_i(x_1,\dots,x_n)}{h_i(x_1,\dots,x_n)}$. Choose an ordering on
$\R(x_1,\dots,x_n)$ compatible with $\zeta$. Then the existential
sentence
\[
\exists X_1\ldots \exists X_n :\; \bigwedge_{1\leq i\leq m}
h_i(X_1,\dots,X_n)\ne 0 \,\wedge\,
\frac{g_i(X_1,\dots,X_n)}{h_i(X_1,\dots,X_n)} >0
\]
holds in $\R(x_1,\dots,x_n)$ with this ordering. By Tarski's Transfer
Principle, it also holds in $\R$ with the usual ordering. That is, there
exist $a_1,\dots,a_n\in\R$ such that $h_i(a_1,\dots,a_n)\ne 0$ and
$\frac{g_i(a_1,\dots,a_n)}{h_i(a_1,\dots,a_n)} >0$ for $1\leq i\leq m$.
Hence for every $\R$-place $\zeta\in M(\R(x_1,\dots,x_n))$ such that
$\zeta (x_i)=a_i$ we will have that $\zeta (f_i)=
\frac{g_i(a_1,\dots,a_n)}{h_i(a_1,\dots,a_n)} >0$. Among all such
$\zeta$ there is precisely one in $\im(\iota)$. For this $\zeta$, we
have that $\zeta\in U\cap\im (\iota)$. This proves that $\im (\iota)$
lies dense in $M(\R(x_1,\dots,x_n))$.

For $n>1$, Remark~\ref{many} shows that there are infinitely many
$\R$-places $\zeta\in M(\R(x_1,\dots,x_n))$ such that $\zeta (x_i)=
a_i\,$. As only one of them is in $\im(\iota)$, $U\setminus\im(\iota)$
is infinite.
\end{proof}

\begin{corollary}
A compatible embedding $\iota$ as in (\ref{embprod}) cannot be
continuous with respect to the product topology on $\prod_{i=1}^{n}
M(\mathbb R(x_i))$.
\end{corollary}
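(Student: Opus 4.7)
The plan is to deduce this as a direct topological consequence of Theorem~\ref{cd}, using compactness and Hausdorffness. Essentially, a continuous image of a compact space into a Hausdorff space is closed, so if $\iota$ were continuous its image would have to be both closed and dense, hence all of $M(\R(x_1,\dots,x_n))$, contradicting the second half of Theorem~\ref{cd}.

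More precisely, first I would observe that each $M(\R(x_i))$ is compact (as noted in the introduction, spaces of $\R$-places of formally real fields are compact Hausdorff). By Tychonoff's theorem, the product $\prod_{i=1}^n M(\R(x_i))$ with the product topology is compact. Next, assuming for contradiction that a compatible $\iota$ as in (\ref{embprod}) is continuous, the image $\im(\iota)$ is the continuous image of a compact space, hence compact. Since $M(\R(x_1,\dots,x_n))$ is Hausdorff, compact subsets are closed, so $\im(\iota)$ is a closed subset of $M(\R(x_1,\dots,x_n))$.

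Now I invoke both halves of Theorem~\ref{cd}. On the one hand, $\im(\iota)$ is dense in $M(\R(x_1,\dots,x_n))$; combined with being closed this forces $\im(\iota) = M(\R(x_1,\dots,x_n))$. On the other hand, for $n > 1$, Theorem~\ref{cd} shows that every non-empty basic open subset of $M(\R(x_1,\dots,x_n))$ contains infinitely many places outside $\im(\iota)$, so $\im(\iota)$ is a proper subset. This is the desired contradiction, so $\iota$ cannot be continuous.

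There is no real obstacle here — the corollary is essentially a one-line consequence of Theorem~\ref{cd} together with the standard fact that a continuous map from a compact space into a Hausdorff space has closed image. The only minor subtlety worth recording is that the statement is meaningful only for $n > 1$ (for $n = 1$ the only compatible $\iota$ is the identity, and is trivially continuous), which matches the hypothesis used in the second half of Theorem~\ref{cd}.
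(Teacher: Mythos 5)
Your proof is correct and follows exactly the paper's own argument: compactness of the product, continuous image of compact in Hausdorff is closed, density forces the image to be everything, and the second half of Theorem~\ref{cd} gives the contradiction. Your remark about the $n=1$ case being degenerate is a sensible aside but not part of the paper's proof.
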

\begin{proof}
Suppose we have a continuous compatible embedding. Under the product
topology, the space $\prod_{i=1}^{n} M(\mathbb R(x_i))$ is compact. As
the continuous image of a compact space in a Hausdorff space is again
compact (cf.\ \cite{K}, Chapter 5, Theorem 8), we find that the image
is closed in $M(\mathbb R(x_1,\dots,x_n))$. As it is also dense in
$M(\mathbb R(x_1,\dots,x_n))$ by Theorem~\ref{cd}, it must be equal to
$M(\mathbb R(x_1,\dots,x_n))$. But this contradicts the second assertion
of Theorem~\ref{cd}. Hence the embedding cannot be continuous.
\end{proof}

\begin{remark}
All of the above can be generalized to the case of infinitely many
elements $x_i\,$, $i\in I$ that are algebraically independent over
$\R$. After choosing some well-ordering on $I$, the construction of the
embedding
\[
\iota:\> \prod_{i\in I} M(\mathbb R(x_i)) \>\hookrightarrow\>
M(\mathbb R(x_i\mid i\in I))
\]
proceeds by (possibly transfinite) induction. The above theorem and
corollary remain valid. The proof of the theorem still works, as in the
finitely many polynomials $f_1,\ldots,f_m$ only finitely many variables
$x_i$ can appear. For infinite $I$, it is no longer true that the choice
of the elements $a_1,\dots,a_n$ determines a unique place in
$\im(\iota)$. Still, an application of Remark~\ref{many} shows that
$U\setminus\im(\iota)$ is infinite.
\end{remark}

\pars
We will now reprove the result of the corollary in the case of $n=2$ by
looking more closely at the topologies that are involved here. Every
embedding of $M(\mathbb R(x))\times M(\mathbb R(y))$ in $M(\mathbb
R(x,y))$ will induce a topology on $M(\mathbb R(x)) \times M(\mathbb
R(y))$ whose open sets are the preimages of the intersections of the
open sets of $M(\mathbb R(x,y))$ with the image of the embedding.

\begin{theorem}
For every compatible embedding $\iota$, the topology induced on
\linebreak
$M(\mathbb R(x))\times M(\mathbb R(y))$ is finer than the product
topology.
\end{theorem}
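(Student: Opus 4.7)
The plan is to unpack the definitions and reduce the claim to the continuity of the restriction maps, which was noted at the beginning of the paper (Remark~\ref{full}, part~3).

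First I would fix notation: denote by $\tau_\iota$ the topology on $M(\mathbb R(x))\times M(\mathbb R(y))$ induced by $\iota$, whose open sets are exactly the sets of the form $\iota^{-1}(W)$ for $W$ open in $M(\mathbb R(x,y))$ (equivalently, the topology that makes $\iota$ a topological embedding onto $\im(\iota)$ with the subspace topology). To show that $\tau_\iota$ is finer than the product topology, it suffices to check that every subbasic open set of the product topology lies in $\tau_\iota$. Such a subbasic set has the form $\pi_j^{-1}(U_j)$, where $\pi_j$ is the projection onto the $j$-th factor and $U_j$ is open in $M(\mathbb R(x_j))$ (with $x_1=x$, $x_2=y$).

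Next I would exploit compatibility: by definition $\rho\circ\iota=\mathrm{id}$, and since the $j$-th coordinate of $\rho$ is the restriction map $\res_j:=\res_{\mathbb R(x,y)|\mathbb R(x_j)}:M(\mathbb R(x,y))\to M(\mathbb R(x_j))$, we have
\[
\pi_j\>=\>\res_j\circ\iota\qquad(j=1,2).
\]
Therefore
\[
\pi_j^{-1}(U_j)\>=\>\iota^{-1}\bigl(\res_j^{-1}(U_j)\bigr).
\]
By Remark~\ref{full}, part~3), the restriction map $\res_j$ is continuous with respect to the Harrison (equivalently, $\R$-place) topologies, so $\res_j^{-1}(U_j)$ is open in $M(\mathbb R(x,y))$. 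Consequently $\pi_j^{-1}(U_j)$ is of the form $\iota^{-1}(W)$ with $W$ open in $M(\mathbb R(x,y))$, hence is $\tau_\iota$-open.

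Since the subbasis of the product topology is contained in $\tau_\iota$, the product topology is contained in $\tau_\iota$, proving that $\tau_\iota$ is finer. There is no real obstacle here: the statement is essentially a formal consequence of the compatibility condition $\rho\circ\iota=\mathrm{id}$ together with the continuity of the restriction maps. The only point worth mentioning for the reader is that the previous Corollary then follows again, because if $\tau_\iota$ equalled the product topology $\iota$ would be continuous, which was just shown to be impossible; so strict refinement holds for every compatible embedding.
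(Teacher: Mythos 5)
Your proof of the containment (product topology $\subseteq\tau_\iota$) is correct and takes a genuinely different, cleaner route than the paper. The paper verifies the containment by exhibiting, for each basic open set of the product topology (interior or exterior of a circle $(x-a)^2+(y-b)^2=r^2$), an explicit $f\in\R(x,y)$ with $\im(\iota)\cap H'(f)$ equal to the image of that basic set. Your argument replaces this computation by the structural observation $\pi_j=\res_j\circ\iota$ (which is exactly what compatibility says) together with the continuity of the restriction maps; it applies without change to any number of variables and does not require choosing a particular subbasis of the product topology. This is a real simplification.

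However, the theorem as the paper understands it asserts \emph{strict} refinement: the paper's proof devotes a second paragraph to producing a specific $H'(f)$, with
\[
f(x,y)=1+\tfrac{x}{y},\quad 1+\tfrac{y}{x},\quad\text{or}\quad \tfrac{y^2}{x^2},
\]
whose trace on $\im(\iota)$ is not open in the product topology. You close this gap by invoking the earlier Corollary (a compatible embedding is never continuous for the product topology), and logically this is fine since that Corollary was proved beforehand. But the paper explicitly frames this theorem as a way to ``reprove the result of the corollary'' by analyzing the topologies directly, so appealing to the Corollary for the strictness is contrary to the stated purpose and makes the claim that the Corollary ``then follows again'' circular in spirit: your argument shows the two statements are equivalent given the containment, but it does not give an independent second proof. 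To match what the paper is actually doing, you would still need a direct exhibit of a $\tau_\iota$-open set that fails to be product-open, along the lines of the paper's choice of $f$ above.
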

\begin{proof}
Take a basic open set in the product topology of $M(\mathbb R(x))\times
M(\mathbb R(y))$ which is the interior or exterior of a circle given by
$(x-a)^2+(y-b)^2=r^2$, where $a,b,r\in\R$. We set
\[
f(x,y)\>=\>r^2-(x-a)^2-(y-b)^2\>.
\]
Then the set $\im(\iota)\cap H'(f)$ is precisely the image of the
interior of the circle, and set $\im(\iota)\cap H'(-f)$ is precisely the
image of the exterior of the circle. This proves that the induced
topology is equal to or finer than the product topology.

It remains to present an induced open set in $M(\mathbb R(x))\times
M(\mathbb R(y))$ which is not open in the product topology. Take
the unique $\xi$ in $\im(\iota)$ such that $\xi(x)=0$ and $\xi(y)=0$.
\[
f(x,y)\>=\>\left\{
\begin{array}{ll}
1+\frac{x}{y} & \mbox{if } \xi(\frac{x}{y})=0  \\
1+\frac{y}{x} & \mbox{if } \xi(\frac{y}{x})=0  \\
\frac{y^2}{x^2} & \mbox{otherwise.}
\end{array}  \right.
\]
It follows in all three cases that $\xi\in H'(f)$. The preimage of $\xi$
under $\iota$ is $(\xi_1,\xi_2)$ where $\xi_1(x)=0$ and $\xi_2(y)=0$. If
the subset $U$ induced by $H'(f)$ in $M(\mathbb R(x))\times M(\R(y))$
would be open, then it would contain the interior of a circle $x^2+y^2
=r^2$ for some $r>0$. But this is impossible since whenever $(\xi_1,
\xi_2)\in U$, then for the first choice of $f$, $\xi_2(y)=0$ must imply
$\xi_1(x)=0$, and for the two other choices of $f$, $\xi_1(x)=0$ must
imply $\xi_2(y)=0$.
\end{proof}

\sn
{\bf Open Problem:} What is the induced topology? Is it one-dimensional
or two-dimensional?


%
%
%
%
\section{Embeddings of more general products}
For simplicity, we will only consider the product of two spaces $M(F_1)$
and $M(F_2)$; a generalization to any finite products can be achieved
along the lines of the last section. We will also assume that $F_1$ and
$F_2$ both contain $\R$. Then we can assume them embedded in some
extension field of $\R$ such that $F_1$ and $F_2$ are linearly disjoint
over $\R$. We denote by $F$ the field compositum of $F_1$ and $F_2$,
that is, the smallest subextension of the given extension of $\R$ that
contains both $F_1$ and $F_2$.

As before, we consider the corresponding projection mapping
\[
\rho:\> M(F)\ni\xi \>\mapsto\> (\xi|_{F_1},\xi|_{F_2})\in M(F_1)\times
M(F_2) \>.
\]
We show that $\rho$ is surjective. Take $(\xi_1,\xi_2)\in M(F_1)\times
M(F_2)$. Then there is an extension $\xi'_1$ of $\xi_1$ from $F_1$ to
$F$ such that the residue field of $\xi'_1$ is $F_2$. Then take
$\iota(\xi_1,\xi_2) = \xi_2\circ\xi'_1$. Here again, one obtains a
different place of $F$ by interchanging $F_1$ and $F_2\,$, showing that
$\rho$ is not injective.

\pars
The surjectivity shows that there exist embeddings
\[
\iota:\>M(F_1)\times M(F_2)\>\longrightarrow\>M(F)
\]
As before, $\iota$ will be called {\bf compatible} if $\rho\circ\iota$
is the identity.

If $F_1|\R$ and $F_2|\R$ are function fields, we can again prove that
the image of every compatible embedding $\iota$ lies dense in $M(F)$. We
will need the following fact. For a proof, see the second half of the
proof of the lemma on p.~190 of \cite{kp}.

\begin{lemma}                               \label{KP}
Take a field $k$ and a function field $K=k(x_1,\ldots,x_d,z)$ where
$x_1,\ldots,x_d$ are algebraically independent over $k$ and $z$ is
separable-algebraic over $k(x_1,\ldots,x_d)$. If $f\in k[x_1,\ldots,
x_d, Z]$ is the irreducible polynomial of $z$ over $k(x_1,\ldots,x_d)$
and if $a_1,\ldots,a_d,b\in k$ are such that
\[
f(a_1,\ldots,a_d,b)\>=\>0\mbox{ \ \ and \ \ }
\frac{\displaystyle\partial f}{\displaystyle\partial Z}
(a_1,\ldots,a_d,b)\>\ne\>0\>,
\]
then $K$ admits a $k$-rational place $\xi$ such that
$\xi(x_i)=a_i$ for $1\leq i\leq d$, and $\xi(z)=b$.
\end{lemma}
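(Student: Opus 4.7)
The plan is to lift the place by Hensel's lemma applied in a henselization. Concretely, I would first build a $k$-rational place $\xi_0$ of the purely transcendental subfield $L:=k(x_1,\ldots,x_d)$ sending each $x_i$ to $a_i$, then pass to its henselization, use Hensel's lemma to produce a root $\tilde z$ of $f(x_1,\ldots,x_d,Z)$ specializing to $b$, and finally restrict the henselian valuation to $K$ via the embedding $z\mapsto\tilde z$.

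For the first step, I would construct $\xi_0$ as the composition $\tau_1\circ\cdots\circ\tau_d$, where $\tau_j$ is the Gau\ss{} place of $k(x_1,\ldots,x_j)=k(x_1,\ldots,x_{j-1})(x_j)$ that is trivial on $k(x_1,\ldots,x_{j-1})$ and sends $x_j\mapsto a_j$. Each $\tau_j$ has residue field $k(x_1,\ldots,x_{j-1})$, so the composition $\xi_0$ is $k$-rational, and $\xi_0(x_i)=a_i$ for each $i$ because $a_i\in k$ is then fixed by the subsequent places $\tau_{i-1},\ldots,\tau_1$.

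For the second step, let $(L^h,v_0^h)$ be a henselization of $(L,v_0)$, where $v_0$ is the valuation corresponding to $\xi_0$. Since $L^h|L$ is immediate, the residue field of $v_0^h$ is still $k$. The reduction of $f(x_1,\ldots,x_d,Z)\in L^h[Z]$ modulo the maximal ideal is $\bar f(Z)=f(a_1,\ldots,a_d,Z)\in k[Z]$, and the two hypotheses on $(a_1,\ldots,a_d,b)$ say exactly that $b$ is a simple root of $\bar f$. Hensel's lemma then yields a unique $\tilde z\in L^h$ with $f(x_1,\ldots,x_d,\tilde z)=0$ and image $b$ in the residue field. Since $f$ is irreducible over $L$, the $L$-algebra map $L[Z]/(f)\to L^h$ sending $Z\mapsto\tilde z$ is a well-defined injection and identifies $K=L(z)$ with a subfield of $L^h$.

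Restricting $v_0^h$ along this embedding gives a valuation $v$ on $K$ whose associated place $\xi$ has the required properties: the residue field of $v$ lies between $k$ (since $v$ is trivial on $k\subseteq K$) and the residue field of $v_0^h$ (which equals $k$), so it is exactly $k$, i.e.\ $\xi$ is $k$-rational; by construction $\xi(x_i)=a_i$; and $\xi(z)$ equals the image of $\tilde z$ in the residue field, which is $b$. The step I expect to require the most care is the Hensel invocation: the derivative hypothesis is precisely the smoothness condition that activates Hensel's lemma, and the separability of $z$ over $L$ is what guarantees that $\bar f$ admits a simple root in the first place, so that the simple-root form of Hensel's lemma applies.
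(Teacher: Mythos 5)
The paper does not prove this lemma itself; it defers to the second half of the proof of the lemma on p.~190 of \cite{kp}, so there is no in-paper argument to compare against. Your proof is correct and is the standard valuation-theoretic argument: build a $k$-rational place of $L=k(x_1,\ldots,x_d)$ sending $x_i\mapsto a_i$ by stacking degree-one places, pass to the henselization (an immediate extension, so the residue field is still $k$ and $k[x_1,\ldots,x_d]$ lies in the valuation ring), invoke the simple-root form of Hensel's lemma -- which does not require monicity of $f$ in $Z$ -- to lift $b$ to a root $\tilde z$, and embed $K\cong L[Z]/(f)$ into $L^h$ over $L$ via $Z\mapsto\tilde z$ so that the restricted place does what is required.

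Two small remarks. First, the places $\tau_j$ are not Gau{\ss} places: in the paper's own terminology the Gau{\ss} (functorial) valuation is the constant extension, under which $x_j$ has \emph{transcendental} residue over $k(x_1,\ldots,x_{j-1})$; what you want, and describe correctly in words, is the $(x_j-a_j)$-adic place of $k(x_1,\ldots,x_{j-1})(x_j)$. Second, the role of separability is slightly misstated at the end: separability of $z$ over $L$ ensures that $\partial f/\partial Z$ is not the zero polynomial, so that the hypothesis $\partial f/\partial Z(a_1,\ldots,a_d,b)\ne 0$ is even satisfiable; but that $b$ is a simple root of $\bar f$ is supplied directly by the lemma's two displayed hypotheses, not by separability alone. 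Neither point affects the substance, and the proof stands.
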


\begin{theorem}                         
If $F_1|\R$ and $F_2|\R$ are function fields of transcendence degree
$\geq 1$, then the image of every compatible embedding $\iota$ lies
dense in $M(F)$. But every non-empty basic open subset of $M(F)$
contains infinitely many places that are not in the image of $\iota$.
\end{theorem}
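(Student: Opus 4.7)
My plan mirrors the proof of Theorem~\ref{cd}, with Lemma~\ref{KP} replacing the elementary Tarski step used there for purely rational function fields.

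For the density assertion, I would first take a nonempty basic open $U = H'(f_1) \cap \cdots \cap H'(f_m)$ and a point $\zeta \in U$, fixing an ordering $P$ of $F$ compatible with $\zeta$ so that each $f_i > 0$ in $P$. Since $F = F_1 F_2$ is a finitely generated extension of $\R$, I would write $F = \R(t_1, \ldots, t_d, z)$ with $(t_1, \ldots, t_d)$ a separating transcendence basis chosen to contain separating transcendence bases of both $F_1$ and $F_2$, and $z$ a primitive element with irreducible polynomial $g(T, Z) \in \R[T, Z]$. Writing each $f_i = g_i(t, z)/h_i(t, z)$ and bundling the conditions $g = 0$, $\partial_Z g \ne 0$, $h_i \ne 0$, and $g_i h_i > 0$ into a single existential sentence, Tarski's Transfer Principle transfers validity from $(F, P)$ to $(\R, \leq)$, producing witnesses $(a, b) \in \R^{d+1}$. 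Lemma~\ref{KP} then furnishes an $\R$-rational place $\xi$ of $F$ with $\xi(t_i) = a_i$, $\xi(z) = b$, and $\xi(f_i) = g_i(a, b)/h_i(a, b) > 0$, so $\xi \in U$. Setting $\xi_j := \xi|_{F_j}$, both $\xi$ and $\iota(\xi_1, \xi_2)$ are $\R$-rational places of $F$ agreeing on $F_1$ and on $F_2$; since $F$ is generated as a field by $F_1 \cup F_2$, both places assign the same value to every element of $F$ whose rational expression in $F_1\cup F_2$-elements has finite nonzero value under the common restrictions. Each $f_i$ satisfies this because $\xi(f_i) \in \R \setminus \{0\}$. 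Hence $\iota(\xi_1, \xi_2) \in U \cap \im(\iota)$.

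For the second assertion, I would produce an infinite family of places of $U$ outside $\im(\iota)$, in the spirit of Remark~\ref{many}. Linear disjointness forces the transcendence degree of $F$ over $\R$ to be at least two. For each tuple of positive reals $(r_1, \ldots, r_d)$ with a prescribed rational-independence pattern, I would construct an $\R$-place $\xi^{(r)}$ of $F$ with $\xi^{(r)}(t_i) = a_i$, $\xi^{(r)}(z) = b$, and valuation $v^{(r)}(t_i - a_i) = r_i$; the algebraic lift across $z$ follows from Hensel's Lemma, whose hypothesis $\partial_Z g(a, b) \ne 0$ is exactly the condition invoked in Lemma~\ref{KP}. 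Each such place lies in $U$ since $\xi^{(r)}(f_i) = g_i(a, b)/h_i(a, b) > 0$. By holding fixed the within-$F_j$ ratios among those $r_i$ indexed by generators of $F_j$ (separately for each $j$), while varying the cross-ratios between the two subsets, I would obtain an infinite family of mutually distinct $\R$-places of $F$ all restricting to the same pair $(\xi_1, \xi_2)$. Since $\iota(\xi_1, \xi_2)$ is a single element of this family, all others lie in $U \setminus \im(\iota)$.

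The main obstacle is the construction of the family $\xi^{(r)}$ at the level of generality of an arbitrary function field. Remark~\ref{many} handles the purely rational case; for arbitrary $F$ one would first run the Remark~\ref{many} construction on the transcendental subfield $\R(t_1, \ldots, t_d)$ and then lift uniquely along the separable-algebraic extension generated by $z$ using Hensel's Lemma. The final verification---that fixing within-$F_j$ ratios keeps restrictions to $F_j$ constant while cross-ratio variation yields inequivalent valuations on $F$---is a routine value-group comparison once the family is in place.
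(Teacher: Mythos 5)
Your plan does mirror the paper's strategy — Tarski transfer plus Lemma~\ref{KP} for density, and a Remark~\ref{many}-type family for the second assertion — but your treatment of the density step has a genuine gap caused by the choice of a \emph{single} primitive element $z$ for $F$ over $\R(t)$.

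The problem is in the sentence claiming that $\xi$ and $\iota(\xi_1,\xi_2)$ ``assign the same value to every element of $F$ whose rational expression in $F_1\cup F_2$-elements has finite nonzero value,'' and then concluding that each $f_i$ is such an element merely because $\xi(f_i)\in\R\setminus\{0\}$. That inference is false in general. Two $\R$-places of $F$ that agree on $F_1$ and on $F_2$ need not agree on an element with finite nonzero residue: take $F_1=\R(x)$, $F_2=\R(y)$, $F=\R(x,y)$ and $f=(x+y)/x$. If $\xi$ has $\xi(x)=\xi(y)=0$, $v(x)=1$, $v(y)=\sqrt 2$, then $\xi(f)=1$, while another extension of $(\xi|_{F_1},\xi|_{F_2})$ to $F$ with $v'(y)<v'(x)$ gives $f\mapsto\infty$. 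So $\iota(\xi_1,\xi_2)(f_i)=\xi(f_i)$ does not follow from $\xi(f_i)\neq 0,\infty$; you must actually exhibit an expression for $f_i$ in generators lying in $F_1\cup F_2$ with numerator and denominator having finite nonzero residue. The obstruction is precisely that your primitive element $z$ need not lie in $F_1\cup F_2$, so the representation $f_i=g_i(t,z)/h_i(t,z)$ that Tarski controls does not have the form your argument needs. The paper avoids this by writing $F=\R(x_1,\ldots,x_{d+e},z_1,z_2)$ with $z_1\in F_1$, $z_2\in F_2$, invoking Lemma~\ref{KP} \emph{twice} to produce $\zeta_1\in M(F_1)$ and $\zeta_2\in M(F_2)$ separately, and then reading off $\iota(\zeta_1,\zeta_2)(x_i)=a_i$ and $\iota(\zeta_1,\zeta_2)(z_j)=b_j$ directly from compatibility. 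Your argument can be repaired along the same lines (e.g.\ by taking $z$ to be an $\R$-linear combination $c_1z_1+c_2z_2$ and expanding), but as written it does not establish $\iota(\xi_1,\xi_2)\in U$.

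Your second half, by contrast, is fine and in one respect cleaner than the paper's. You fix the within-$F_j$ ratios of the $r_i$, vary the cross-ratios, get infinitely many places of $F$ in $U$ with the \emph{same} restrictions to $F_1$ and $F_2$, and note that at most one of these can lie in $\im(\iota)$ since $\iota$ is a section of $\rho$. This is a correct argument that works for \emph{every} compatible $\iota$; the paper instead asserts that all places in $\im(\iota)$ are compositions of two non-trivial places and hence have non-archimedean value group, which is clear for the specific $\iota$ it constructs but is not justified for an arbitrary compatible embedding, so your route is the safer one. You should still spell out (or cite) that the rational independence of the $r_i$ forces the residue field of $v^{(r)}$ on $\R(t)$ to be $\R$, and that the Hensel lift to $F$ restricts on $F_j$ to the lift determined by $(r_i)_{i\in F_j}$ alone; both are true but are the load-bearing steps of your uniqueness-of-restriction claim.
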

\begin{proof}
We write $F_1=\R(x_1,\ldots,x_d,z_1)$ and $F_2=\R(x_{d+1},\ldots,x_{d+e}
,z_2)$ with $x_1,\ldots,x_{d+e}$ algebraically independent over $\R$,
$z_1$ separable-algebraic over $\R(x_1,\ldots,x_d)$, and $z_2$
separable-algebraic over $\R(x_{d+1},\ldots,x_{d+e})$. Then
$F=\R(x_1,\ldots,x_{d+e},z_1,z_2)$. Let $G_1\in k[x_1,\ldots,x_d,Z_1]$
be the irreducible polynomial of $z_1$ over $k(x_1,\ldots,x_d)$ and
$G_2\in k[x_{d+1},\ldots, x_{d+e}, Z]$ be the irreducible polynomial of
$z_2$ over $k(x_{d+1},\ldots,x_{d+e})$.

Take non-zero elements $f_1,\ldots,f_m\in F$ such that $U:=
H'(f_1)\cap\ldots\cap H'(f_n)\ne\emptyset$. Take $\zeta\in U$ and write
\[
f_i(x_1,\ldots,x_{d+e},z_1,z_2)\>=\>
\frac{g_i(x_1,\ldots,x_{d+e},z_1,z_2)}{h_i(x_1,\ldots,x_{d+e})}
\]
with polynomials $g_i\in\R[X_1,\ldots,X_{d+e},Z_1,Z_2]$ and $h_i\in
\R[X_1,\ldots,X_{d+e}]$. Choose an ordering on $F$ compatible with
$\zeta$. Then the existential sentence

\begin{eqnarray*}
\lefteqn{\exists X_1\ldots\exists X_{d+e} \exists Z_1 \exists Z_2:}&&\\
 & G_1(X_1,\ldots,X_d,Z_1)\>=\>0\;\wedge\;
\frac{\displaystyle\partial G_1}{\displaystyle\partial Z_1}
(X_1,\ldots,X_d,Z_1)\>\ne\>0\>\wedge  & \\
 & G_2(X_{d+1},\ldots,X_{d+e},Z_2)\>=\>0\;\wedge\;
\frac{\displaystyle\partial G_2}{\displaystyle\partial Z_2}
(X_{d+1},\ldots,X_{d+e},Z_2)\>\ne\>0\>\wedge & \\
 & \bigwedge_{1\leq i\leq m} h_i(X_1,\ldots,X_{d+e})\ne 0
\>\wedge\> \frac{g_i(X_1,\ldots,X_{d+e},Z_1,Z_2)}
{h_i(X_1,\ldots,X_{d+e})} >0 &
\end{eqnarray*}
holds in $F$ with this ordering. By Tarski's Transfer Principle, it also
holds in $\R$ with the usual ordering. That is, there exist $a_1,\ldots,
a_{d+r},b_1,b_2\in\R$ such that
\begin{eqnarray}
 & G_1(a_1,\ldots,a_d,b_1)\>=\>0\;\wedge\;  \label{KP1}
\frac{\displaystyle\partial G_1}{\displaystyle\partial Z_1}
(a_1,\ldots,a_d,b_1)\>\ne\>0 & \\
 & G_2(a_{d+1},\ldots,a_{d+e},b_2)\>=\>0\;\wedge\;  \label{KP2}
\frac{\displaystyle\partial G_2}{\displaystyle\partial Z_2}
(a_{d+1},\ldots,a_{d+e},b_2)\>\ne\>0 & \\
 & \bigwedge_{1\leq i\leq m} h_i(a_1,\ldots,a_{d+e})\ne 0
\>\wedge\> \frac{g_i(a_1,\ldots,a_{d+e},b_1,b_2)}
{h_i(a_1,\ldots,a_{d+e})} >0 &
\end{eqnarray}
Hence for every $\R$-place $\zeta\in M(F)$ such that $\zeta (x_i)=a_i$
and $\zeta (z_j)=b_j$ we will have that $\zeta (f_i)>0$, $1\leq i\leq
m$. By Lemma~\ref{KP}, (\ref{KP1}) guarantees that there is $\zeta_1\in
M(F_1)$ such that $\zeta_1(x_i)=a_i\,$, $1\leq i\leq d$, and $\zeta_1
(z_1)= b_1\,$, and (\ref{KP2}) guarantees that there is $\zeta_2\in
M(F_2)$ such that $\zeta_2(x_i)=a_i\,$, $d+1\leq i\leq d+e$, and
$\zeta_2(z_2)= b_2\,$. Consequently, there is $\zeta\in \im(\iota)$ with
$\zeta (x_i)=a_i$ and $\zeta (z_j)=b_j\,$. It follows that $\zeta\in
U\cap\im (\iota)$. This proves that the image of our construction lies
dense in $M(F)$.

From Remark~\ref{many} it again follows that there are infinitely many
$\R$-places $\zeta$ of $\R(x_1,\ldots,x_{d+e})$ such that $\zeta(x_i)=
a_i\,$. These places can be extended to $F$ by setting $\zeta(z_j)=
b_j\,$. All of them have archimedean value group. In contrast, all
places in $\im(\iota)$ are compositions of two non-trivial places and
therefore have non-archimedean value group. This shows that
$U\setminus\im(\iota)$ is infinite.
\end{proof}

As before, one proves:
\begin{corollary}
If $F_1|\R$ and $F_2|\R$ are function fields, then a compatible
embedding cannot be continuous with respect to the product
topology on $M(F_1)\times M(F_2)$.
\end{corollary}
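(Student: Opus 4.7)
The plan is to mirror the proof of the earlier corollary in Section~\ref{embtor}, using compactness together with the Hausdorff property of $M(F)$ to force a closed image, and then deriving a contradiction from the density assertion of the preceding theorem.

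First, I would recall that for any formally real field $F_i$, the space $M(F_i)$ is compact Hausdorff (this was noted in the introduction, as $\lambda$ is continuous and closed from the compact space $\cX(F_i)$ onto $M(F_i)$). Hence $M(F_1)\times M(F_2)$, equipped with the product topology, is also compact (by Tychonoff, or just the finite case). Now suppose, for contradiction, that a compatible embedding $\iota: M(F_1)\times M(F_2)\rightarrow M(F)$ is continuous with respect to the product topology.

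Next, I would invoke the standard fact that a continuous image of a compact space in a Hausdorff space is compact and therefore closed (as cited earlier from \cite{K}, Chapter~5, Theorem~8). Since $M(F)$ is Hausdorff, the image $\im(\iota)$ is closed in $M(F)$. By the previous theorem, $\im(\iota)$ is dense in $M(F)$. A closed dense subset equals the ambient space, so $\im(\iota)=M(F)$.

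But this directly contradicts the second assertion of the preceding theorem, which guarantees that every non-empty basic open subset of $M(F)$ contains places outside $\im(\iota)$ — in particular, $\im(\iota)\ne M(F)$. This contradiction forces $\iota$ not to be continuous with respect to the product topology. I do not anticipate any real obstacle here: both ingredients (compact $\Rightarrow$ closed image in Hausdorff, and the density/non-surjectivity dichotomy) have already been established, so the proof is just a short application of them, fully analogous to the corollary proved in Section~\ref{embtor}.
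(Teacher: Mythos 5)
Your proof is correct and is precisely the argument the paper intends: the paper dispatches this corollary with ``As before, one proves:'' referring back to the corollary in Section~\ref{embtor}, whose proof is exactly your compactness-plus-Hausdorff argument combined with the density/non-surjectivity dichotomy from the preceding theorem.
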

%

%

%
%
\section{Raising the transcendence degree}  \label{sectrp}
In this final section, we show how to use previous constructions to
embed $M(K)$ in $M(L)$, for an arbitrary field $K$ and suitable
transcendental extensions $L$ of $K$.

\begin{theorem}                             \label{rpemb}
Assume that $L$ admits a $K$-rational place $\xi$. Then
\[
\iota:\>M(K)\ni\zeta\>\mapsto\>\zeta\circ\xi\in M(L)
\]
is a continuous embedding compatible with restriction.
\end{theorem}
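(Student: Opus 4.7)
The plan is to check the four properties one by one, using the subbasis $\{H'(b)\mid b\in L\}$ for the topology on $M(L)$ described in the introduction.

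First I would verify that $\iota(\zeta)=\zeta\circ\xi$ really lies in $M(L)$. Since the composition of two places is a place, and $\xi$ takes values in $K\cup\{\infty\}$ while $\zeta$ takes values in $\R\cup\{\infty\}$, the composition is a place of $L$ with image in $\R\cup\{\infty\}$, hence an $\R$-place.

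Next I would observe that $\iota$ is compatible with restriction. Because $\xi$ is trivial on $K$ (i.e.\ acts as the identity on $K$), for every $a\in K$ we have $\iota(\zeta)(a)=\zeta(\xi(a))=\zeta(a)$, so $\iota(\zeta)|_K=\zeta$. This immediately forces $\iota$ to be injective: any two $\R$-places with the same image under $\iota$ agree on $K$, so they are equal.

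The main (but still easy) step is continuity. I take a subbasic open set $H'(b)=\{\eta\in M(L)\mid \infty\ne\eta(b)>0\}$ for some $b\in L$ and compute its preimage under $\iota$. If $b$ lies outside the valuation ring of $\xi$, then $\xi(b)=\infty$, hence $\iota(\zeta)(b)=\infty$ for every $\zeta\in M(K)$, so $\iota^{-1}(H'(b))=\emptyset$, which is open. Otherwise $\xi(b)\in K$, and then
\[
\iota^{-1}(H'(b))\>=\>\{\zeta\in M(K)\mid \infty\ne\zeta(\xi(b))>0\}\>=\>H'(\xi(b))\>,
\]
which is open in $M(K)$ by the description of the topology recalled in the introduction. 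So the preimage of every subbasic open set is open, and continuity follows. The only subtle point to keep straight is the $\infty$ bookkeeping in the composition $\zeta\circ\xi$, which is what splits the continuity argument into the two cases above; otherwise the proof is entirely formal.
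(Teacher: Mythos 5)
Your proof is correct and takes essentially the same route as the paper: show compatibility with restriction (hence injectivity), then verify continuity on the subbasis $H'(b)$ by distinguishing whether $\xi(b)=\infty$ and reducing to $H'(\xi(b))$ otherwise. The only cosmetic difference is that the paper enters the nontrivial case by assuming $H'(f)\cap\im(\iota)\ne\emptyset$ and deducing $\xi(f)\ne\infty$, whereas you split directly on $\xi(b)$; the two bookkeeping choices are equivalent.
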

\begin{proof}
It is clear that the embedding is compatible with restriction. For the
continuity, take $f\in L$ and assume that $H'(f)\cap \im(\iota)\ne
\emptyset$. Pick $\zeta\in M(K)$ such that $\zeta\circ\xi=\iota(\zeta)
\in H'(f)$. It follows that $(\zeta\circ\xi)(f)\ne\infty$ and therefore,
$\infty\ne\xi(f)\in K$. For arbitrary $\zeta\in M(K)$, we have that
$(\zeta\circ\xi)(f)=\zeta(\xi(f))$, so $\zeta\circ\xi\in H'(f)
\Leftrightarrow \zeta\in H'(\xi(f))$. Hence, $\iota^{-1}(H'(f))=
H'(\xi(f))$, which proves that $\iota$ is continuous.
\end{proof}

There are fields $L$ of arbitrary transcendence degree over $K$ which
allow a unique $K$-rational place $\xi$. This fact has been used in
\cite{eo} to show that a given space of $\R$-places can be realized over
arbitrarily large fields. The other extreme is:

\begin{corollary}
Take a collection $x_i\,$, $i\in I$, of elements algebraically
independent over $K$. Then there are at least $|K|^{|I|}$ many distinct
continuous embeddings of $M(K)$ in $M(K(x_i\mid i\in I))$, all of them
compatible with restriction and having mutually disjoint images.
\end{corollary}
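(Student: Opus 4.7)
My plan is to attach to each tuple $\alpha=(\alpha_i)_{i\in I}\in K^I$ a $K$-rational place $\xi_\alpha$ of $L=K(x_i\mid i\in I)$ with $\xi_\alpha(x_i)=\alpha_i$, apply Theorem~\ref{rpemb} to obtain the continuous embedding $\iota_\alpha:\zeta\mapsto\zeta\circ\xi_\alpha$ of $M(K)$ in $M(L)$ compatible with restriction, and then verify that distinct tuples yield embeddings with pairwise disjoint images. Since $|K^I|=|K|^{|I|}$, this will produce the desired lower bound.

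For the construction of $\xi_\alpha$, I would use algebraic independence of the $x_i$: the evaluation $K$-algebra map $K[x_i\mid i\in I]\to K$ sending $x_i\mapsto\alpha_i$ is well defined and surjective. For finite $I$, iterating the one-variable place construction from Section~\ref{embMrc} produces $\xi_\alpha$ one variable at a time; for infinite $I$, one well-orders $I$ and iterates, which causes no real trouble because every $f\in L$ involves only finitely many of the $x_i$.

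The central step is disjointness of images. Suppose, for contradiction, that $\iota_\alpha(\zeta_1)=\iota_\beta(\zeta_2)$ for distinct $\alpha,\beta\in K^I$ and some $\zeta_1,\zeta_2\in M(K)$; that is, $\zeta_1\circ\xi_\alpha=\zeta_2\circ\xi_\beta$. Restricting both sides to $K$, and using that each of the places $\xi_\alpha,\xi_\beta$ is trivial on $K$, forces $\zeta_1=\zeta_2=:\zeta$. Choosing $i\in I$ with $\alpha_i\neq\beta_i$ and evaluating both composite places on the rescaled coordinate
\[
f\;:=\;\frac{x_i-\alpha_i}{\beta_i-\alpha_i}\;\in\;L\,,
\]
a direct computation gives $\xi_\alpha(f)=0$ and $\xi_\beta(f)=1$, whence $\zeta(\xi_\alpha(f))=0\neq 1=\zeta(\xi_\beta(f))$, contradicting the assumed equality of the two composite places.

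The step I expect to be most delicate is precisely this disjointness claim: at first glance one fears needing some $\R$-place of $K$ to separate $\alpha_i$ from $\beta_i$, which need not hold in general (for instance, $\alpha_i$ and $\beta_i$ might differ by an element that every $\R$-place of $K$ sends to $0$). The rescaling above circumvents the issue by exploiting that every $\R$-place fixes $0$ and $1$, so that the required separation ultimately takes place in $\mathbb{Q}$ rather than in $K$.
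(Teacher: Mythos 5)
Your proof is correct and follows essentially the same route as the paper: assign to each tuple $\alpha\in K^I$ a $K$-rational place $\xi_\alpha$ with $\xi_\alpha(x_i)=\alpha_i$, invoke Theorem~\ref{rpemb}, and observe that there are $|K|^{|I|}$ such tuples. The paper leaves the disjointness of images entirely implicit, and you correctly identify this as the one delicate point: the naive attempt to separate $\alpha_i$ from $\beta_i$ by applying $\zeta$ can fail, and your rescaling to $f=\frac{x_i-\alpha_i}{\beta_i-\alpha_i}$, which forces the comparison down to $\zeta(0)=0\neq 1=\zeta(1)$, is a clean and complete way to fill that gap.
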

This follows from the fact that for every choice of elements $a_i\in K$
there is a $K$-rational place $\xi$ of $L$ such that $\xi(x_i)=a_i\,$.

\begin{corollary}
There are at least $2^{\aleph_0}$ many continuous embeddings of
$M(\R(x))$ in $M(\R(x,y))$, all of them compatible with restriction
and having mutually disjoint images.
\end{corollary}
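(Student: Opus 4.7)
The plan is to read this off directly from the preceding corollary by instantiating with $K=\R(x)$ and the singleton index set $I=\{1\}$, $x_1=y$. The element $y$ is transcendental over $\R(x)$, hence algebraically independent in the required sense, and $K(x_i\mid i\in I)=\R(x,y)$. Since $|\R(x)|=2^{\aleph_0}$, the preceding corollary immediately yields $|K|^{|I|}=2^{\aleph_0}$ continuous embeddings of $M(\R(x))$ in $M(\R(x,y))$, each compatible with restriction, with pairwise disjoint images. So strictly speaking nothing more needs to be said.

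For a self-contained derivation, I would unpack the construction as follows. For each $a\in\R\subset\R(x)$, the specialization $y\mapsto a$ determines an $\R(x)$-rational place $\xi_a$ of $\R(x,y)$ (the $(y-a)$-adic place). Applying Theorem~\ref{rpemb} with $K=\R(x)$ and $L=\R(x,y)$ turns each $\xi_a$ into a continuous embedding $\iota_a:\,\zeta\mapsto\zeta\circ\xi_a$ of $M(\R(x))$ into $M(\R(x,y))$ that is compatible with restriction. Since $|\R|=2^{\aleph_0}$, this already supplies enough embeddings; one could alternatively index by $a\in\R(x)$ to get the slightly larger family of the preceding corollary, but it is not needed.

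The only substantive check is pairwise disjointness. Suppose $\iota_a(\zeta_1)=\iota_b(\zeta_2)$ for some $a\ne b$. Restricting both sides to $\R(x)$, on which $\xi_a$ and $\xi_b$ both act trivially, forces $\zeta_1=\zeta_2=:\zeta$. I would then test the common $\R$-place on the rational function $f=(y-a)/(y-b)\in\R(x,y)$: by construction $\xi_a(f)=0$ and $\xi_b(f)=\infty$, so $\zeta\circ\xi_a(f)=0$ while $\zeta\circ\xi_b(f)=\infty$, a contradiction. I anticipate no real obstacle; the only judgment call is to pick a test function whose image under $\xi_a$ and $\xi_b$ separates $0$ from $\infty$ uniformly in $\zeta$, and $(y-a)/(y-b)$ achieves this independently of the choice of $\zeta\in M(\R(x))$.
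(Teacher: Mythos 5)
Your proof is correct and follows the same route the paper intends: instantiate the preceding corollary with $K=\R(x)$ and a single transcendental, which in turn rests on Theorem~\ref{rpemb} applied to the family of $K$-rational places $\xi_a$ obtained from specializations $y\mapsto a$. Your explicit disjointness check via $f=(y-a)/(y-b)$ (giving $\zeta\circ\xi_a(f)=0$ versus $\zeta\circ\xi_b(f)=\infty$, uniformly in $\zeta$) spells out a detail the paper leaves implicit, and indexing by $a\in\R$ rather than $a\in\R(x)$ is an inessential simplification that still yields the stated cardinality $2^{\aleph_0}$.
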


It should be noted that Theorem~\ref{ce} does not follow from
Theorem~\ref{rpemb}. The condition that $vR$ is a convex subgroup of
$vF$ does by no means imply that $F(y)$ admits a $R(y)$-rational place.

\bn

\end{document}